\newtheorem*{rep@theorem}{\rep@title}
\newcommand{\newreptheorem}[2]{%
    \newenvironment{rep#1}[1]{%
        \def\rep@title{#2 \ref{##1}}%
        \begin{rep@theorem}
    }%
    {\end{rep@theorem}}
}
\newtheorem{thm}{Theorem}[subsection]
\newtheorem{prop}[thm]{Proposition}
\newtheorem{prelm}{Lemma}[section]
\newtheorem{lm}[thm]{Lemma}
\newtheorem{res}[thm]{Result}
\newtheorem{crl}[thm]{Corollary}
\theoremstyle{definition}
\newtheorem{prob}[thm]{Open Problem}
\newtheorem{rmk}[thm]{Remark}
\newtheorem{predf}[prelm]{Definition}
\newtheorem{df}[thm]{Definition}
\newtheorem{prent}[prelm]{Notation}
\newtheorem{nt}[thm]{Notation}
\newtheorem{config}[thm]{Configuration}
\newtheorem{constr}[thm]{Construction}
\newcommand{\NN}{\mathbb{N}}
\newcommand{\NNnot}{\NN\setminus\{0\}}
\newcommand{\RR}{\mathbb{R}}
\newcommand{\FF}{\mathbb{F}}
\newcommand{\n}{N} 
\newcommand{\len}{n} 
\newcommand{\red}{r} 
\newcommand{\pg}{\textnormal{PG}}
\newcommand{\ag}{\textnormal{AG}}
\newcommand{\vspan}[1]{\left \langle #1 \right \rangle}
\newcommand{\vspanbig}[1]{\big \langle #1 \big \rangle}
\newcommand{\vspanq}[1]{\vspan{#1}_q}
\newcommand{\vspanqr}[1]{\vspan{#1}_{(q')^{\rho+1}}}
\newcommand{\vspanqm}[1]{\vspan{#1}_{(q')^m}}
\newcommand{\satbound}{s_q}
\newcommand{\lenfunc}{\ell_q}
\newcommand{\covden}{\mu_q}
\newcommand{\covdeninf}{\overline{\mu}_q}
\newcommand{\covdensup}{\mu^*_q}
\newcommand{\Sat}{\mathcal{S}}
\newcommand{\SubSat}{\mathfrak{B}}
\newcommand{\Frame}{\mathcal{F}}
\newcommand{\BigSub}{\mathcal{A}}
\newcommand{\Sub}{\mathcal{B}}
\newcommand{\HyperSub}{\mathcal{C}}
\newcommand{\ppointsSub}{\mathcal{P}_\HyperSub}
\newcommand{\ppointsSubIndex}[1]{\mathcal{P}_{\HyperSub_{#1}}}
\newcommand{\llinesSub}{\mathcal{L}_\HyperSub}
\newcommand{\llinesSubIndex}[1]{\mathcal{L}_{\HyperSub_{#1}}}
\newcommand{\AffineHyperSub}{\mathcal{D}}
\newcommand{\ppointsAff}{\mathscr{P}_\AffineHyperSub}
\newcommand{\llinesAff}{\mathscr{L}_\AffineHyperSub}
\newcommand{\ppointsX}{\mathcal{P}_X}
\newcommand{\llinesX}{\mathcal{L}_X}
\newcommand{\Flower}{\mathcal{F}}
\newcommand{\ppi}{\mathfrak{P}}
\newcommand{\pr}[3]{\textnormal{proj}_{#1,#2}^{#3}}
\newcommand{\prArg}[4]{\textnormal{proj}_{#1,#2}^{#3}(#4)}
\newcommand{\sh}[3]{\textnormal{shad}_{#1,#2}^{#3}}
\newcommand{\shArg}[4]{\textnormal{shad}_{#1,#2}^{#3}(#4)}
\newcommand{\restr}[2]{{#1}_{|#2}}
\newcommand{\fn}[2]{\lceil#1\rfloor^{(#2)}}
\renewcommand{\geq}{\geqslant}
\renewcommand{\leq}{\leqslant}
\renewcommand{\rho}{\varrho}
\renewcommand{\dim}[1]{\textnormal{dim}\left(#1\right)}
\title{Constructing saturating sets in projective spaces using subgeometries}
\author{Lins Denaux \\ {\it Ghent University}}
\date{}
\begin{document}
	
	\maketitle
	
	\begin{abstract}
	    A $\rho$-saturating set of $\pg(\n,q)$ is a point set $\Sat$ such that any point of $\pg(\n,q)$ lies in a subspace of dimension at most $\rho$ spanned by points of $\Sat$.
	    It is generally known that a $\rho$-saturating set of $\pg(\n,q)$ has size at least $c\cdot\rho\,q^\frac{\n-\rho}{\rho+1}$, with $c>\frac{1}{3}$ a constant.
		
		Our main result is the discovery of a $\rho$-saturating set of size roughly $\frac{(\rho+1)(\rho+2)}{2}q^\frac{\n-\rho}{\rho+1}$ if $q=(q')^{\rho+1}$, with $q'$ an arbitrary prime power.
		The existence of such a set improves most known upper bounds on the smallest possible size of $\rho$-saturating sets if $\rho<\frac{2\n-1}{3}$.
		As saturating sets have a one-to-one correspondence to linear covering codes, this result improves existing upper bounds on the length and covering density of such codes.
		
		To prove that this construction is a $\rho$-saturating set, we observe that the affine parts of $q'$-subgeometries of $\pg(\n,q)$ having a hyperplane in common, behave as certain lines of $\ag\big(\rho+1,(q')^\n\big)$.
	    More precisely, these affine lines are the lines of the linear representation of a $q'$-subgeometry $\pg(\rho,q')$ embedded in $\pg\big(\rho+1,(q')^\n\big)$.
	\end{abstract}
	
	{\it Keywords:} Affine spaces, Covering codes, Linear representations, Projective spaces, Saturating sets, Subgeometries.
	
	{\it Mathematics Subject Classification:} $05$B$25$, $94$B$05$, $51$E$20$.
	
	\section{Motivation}\label{Sect_Mot}
	
	The main topic of this article are $\rho$-saturating sets of the Desarguesian projective space $\pg(\n,q)$.
	A $\rho$-saturating set of $\pg(\n,q)$ is a point set $\Sat$ such that any point of $\pg(\n,q)$ lies in a subspace of dimension at most $\rho$ spanned by points of $\Sat$.
	These combinatorial structures are very interesting from a coding-theoretical point of view, since they have a one-to-one correspondence to linear covering codes with covering radius $\rho+1$ (see Subsection \ref{Subsec_CovCod}).
	The existence of a small $\rho$-saturating set implies the existence of a $(\rho+1)$-covering code with small length, a property which is generally desired.
	
	\section{Preliminaries}\label{Sect_Prel}
	
	Throughout this work, we assume $\n\in\NNnot$ and $\rho\in\{0,1,\dots,\n\}$.
	Whenever we shift our geometrical perspective to its coding theoretical counterpart, we assume $\red\in\NN\setminus\{0,1\}$ and $R\in\{1,\dots,\red\}$ (see Subsection \ref{Subsec_CovCod}).
	Furthermore, we assume $q$ and $q'$ to be arbitrary prime powers.
	For the purpose of this article, it is useful to keep in mind that the assumption $q=(q')^{\rho+1}$ (equivalently, $q=(q')^R$) will often be made.
	
	\bigskip
	We will denote the Galois field $\textnormal{GF}(q)$ of order $q$ by $\FF_q$ and the Desarguesian projective space of (projective) dimension $\n$ over $\FF_q$ by $\pg(\n,q)$.
	By omitting a hyperplane in $\pg(\n,q)$, we naturally obtain the Desarguesian affine space of dimension $\n$ over $\FF_q$, which we will denote by $\ag(\n,q)$.
	Furthermore, define the value
	\[
	    \theta_\n := \frac{q^{\n+1}-1}{q-1}\textnormal{,}
	\]
	which equals the number of points in $\pg(\n,q)$.
	
	\begin{predf}
		Let $m\in\NNnot$.
		A \emph{frame} of the projective geometry $\pg(m,q)$ is a set of $m+2$ points of which no $m+1$ points are contained in a hyperplane.
	\end{predf}
	
	The notion of a $q'$-subgeometry will be a key concept throughout this article.
	For a detailed description on subgeometries, see \cite[p.~103]{Hirschfeld}.
	
	\begin{predf}\label{Def_Subgeometry}
		Let $m,m'\in\NN$, $m'\leq m$, and suppose $q=(q')^{\rho+1}$.
		An $m'$\emph{-dimensional} $q'$\emph{-subgeometry} $\Sub$ of $\pg(m,q)$ is a set of subspaces (points, lines, \ldots, $(m'-1)$-dimensional subspaces) of $\pg(m,q)$, together with the incidence relation inherited from $\pg(m,q)$, such that $\Sub$ is isomorphic to $\pg(m',q')$.
		
		If $m'=1$ or $m'=2$, we will often call $\Sub$ a \emph{$q'$-subline} or a \emph{$q'$-subplane} of $\pg(m,q)$, respectively.
		Moreover, we will denote the $m'$-dimensional subspace of $\pg(m,q)$ spanned by the points of $\Sub$ by $\vspanq{\Sub}$.
		If $m'=m$, we will omit the dimension and simply call $\Sub$ a $q'$-subgeometry of $\pg(m,q)$.
		Lastly, whenever $q'$ is clear from context, the prefix `$q'$-' will often be omitted.
	\end{predf}
	
	If $\rho=1$, a $q'$-subgeometry is obviously better known under the name \emph{Baer subgeometry}, by far the most studied subgeometry of projective spaces.
	One can find a short survey on Baer sublines and Baer subplanes in \cite{BarwickEbert} and on general Baer subgeometries in \cite{Bruen}.
	
	Although a lot is known about $q'$-subgeometries, we will only use a few properties concerning these structures, one of which is the following.
	The proof is done by considering the underlying vector space of the projective geometry.
	
	\begin{prelm}[{\cite[Theorems $2.6$ and $2.8$]{BarwickEbert} and \cite[Lemma $1$]{Bruen}}]
		Let $m\in\NNnot$ and let $q$ be square.
		For each frame of $\pg(m,q)$, there exists a unique Baer subgeometry containing each point of the frame.
	\end{prelm}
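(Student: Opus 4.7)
My plan is to work at the level of the underlying vector space $V\cong\FF_q^{m+1}$ and exploit the fact that a basis of $V$, once distinguished up to a simultaneous scalar, singles out a unique Baer subgeometry (namely the set of points all of whose coordinates with respect to that basis can be chosen in $\FF_{q'}$).

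First, for existence, I would write the frame as $P_0,\ldots,P_{m+1}$ and pick arbitrary representative vectors $v_0,\ldots,v_{m+1}\in V$. The hypothesis that no $m+1$ of the frame points lie in a hyperplane translates into the statement that every $(m+1)$-subset of $\{v_0,\ldots,v_{m+1}\}$ is linearly independent. In particular $v_0,\ldots,v_m$ form a basis, so $v_{m+1}=\sum_{i=0}^m\lambda_i v_i$ for scalars $\lambda_i\in\FF_q$, and the independence of each $(m+1)$-subset containing $v_{m+1}$ forces every $\lambda_i$ to be nonzero. Setting $e_i:=\lambda_i v_i$, I obtain a basis with $\vspanq{e_i}=P_i$ and $v_{m+1}=\sum_{i=0}^m e_i$. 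Then the projective points whose coordinates with respect to $(e_0,\ldots,e_m)$ lie in $\FF_{q'}$ (not all zero, up to $\FF_{q'}^*$-scaling) form a subset isomorphic to $\pg(m,q')$, i.e.\ a Baer subgeometry, and by construction it contains all $m+2$ frame points.

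For uniqueness, I would argue that the basis $(e_0,\ldots,e_m)$ is determined up to a common nonzero scalar. Indeed, any other basis $(e_0',\ldots,e_m')$ realising the same frame must satisfy $e_i'=\mu_i e_i$ for scalars $\mu_i\neq 0$ (since $\vspanq{e_i'}=P_i$), and the constraint that $\sum e_i'$ represents $P_{m+1}=\vspanq{\sum e_i}$ forces all $\mu_i$ to be equal. Any Baer subgeometry $\Sub$ containing the frame can be coordinatised by a basis of this shape, and rescaling all basis vectors by the same constant does not change which coordinate vectors land in $\FF_{q'}^{m+1}$; hence the resulting point set is the same.

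The main obstacle, in my view, is not the algebra itself but verifying carefully that the construction indeed yields a \emph{Baer subgeometry} in the sense of Definition~\ref{Def_Subgeometry}, i.e.\ that the point set I write down, equipped with the inherited incidence from $\pg(m,q)$, is truly isomorphic to $\pg(m,q')$. This requires checking that a line of $\pg(m,q)$ spanned by two points of the subset meets the subset in exactly a $q'$-subline, which follows from the fact that $\FF_{q'}$-linear combinations of two $\FF_{q'}$-coordinate vectors again have $\FF_{q'}$-coordinates. Once this compatibility of incidence is established, existence and uniqueness fall out of the vector-space argument above.
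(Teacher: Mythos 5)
Your proposal is correct and uses precisely the vector-space approach the paper alludes to (the paper only sketches the argument, noting ``the proof is done by considering the underlying vector space of the projective geometry,'' and defers to the cited references): normalising to a basis $(e_0,\dots,e_m)$ with $\vspanq{e_i}=P_i$ and $\sum_i e_i$ representing $P_{m+1}$, observing this basis is unique up to a common scalar, and taking the $\FF_{q'}$-coordinate points. The one spot you compress is the claim that every Baer subgeometry $\Sub$ through the frame is coordinatised by a basis of that shape; it holds, but deserves the remark that the frame remains a frame of $\Sub$ viewed intrinsically as $\pg(m,q')$, and that an $\FF_{q'}$-matrix invertible over $\FF_q$ is already invertible over $\FF_{q'}$, so the normalised basis lies in (and defines) the same $\FF_{q'}$-structure.
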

	
	Using the exact same arguments as used in \cite{BarwickEbert,Bruen}, one can easily generalise the proof of the lemma above to arbitrary subgeometries:
	
	\begin{prelm}\label{Lm_UniqueSubThroughFrame}
		Let $m\in\NNnot$ and suppose $q=(q')^{\rho+1}$.
		For each frame of $\pg(m,q)$, there exists a unique $q'$-subgeometry containing each point of the frame.
	\end{prelm}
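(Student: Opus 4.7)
The plan is to imitate the classical Baer-subgeometry argument of \cite{BarwickEbert,Bruen} at the level of the underlying vector space $V=\FF_q^{\,m+1}$ coordinatising $\pg(m,q)$.

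\textbf{Existence.} I would pick nonzero vector representatives $v_1,\dots,v_{m+2}\in V$ for the frame points. The frame condition forces $v_1,\dots,v_{m+1}$ to be an $\FF_q$-basis of $V$ while $v_{m+2}=\sum_{i=1}^{m+1}\lambda_iv_i$ with every $\lambda_i\neq 0$; rescaling each $v_i$ ($i\leq m+1$) by $\lambda_i$, which does not alter the associated projective point, one may assume $v_{m+2}=\sum_{i=1}^{m+1}v_i$. Then $W:=\vspan{v_1,\dots,v_{m+1}}_{q'}$ is an $(m+1)$-dimensional $\FF_{q'}$-subspace of $V$ whose $\FF_q$-span is all of $V$, so its projectivisation $\Sub:=\{\vspanq{w}\mid w\in W\setminus\{0\}\}$ is a $q'$-subgeometry of $\pg(m,q)$ (isomorphic to $\pg(m,q')$) containing the whole frame.

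\textbf{Uniqueness.} Let $\Sub'$ be any $q'$-subgeometry of $\pg(m,q)$ through the frame. As in the Baer case, $\Sub'$ is the projectivisation of some $(m+1)$-dimensional $\FF_{q'}$-subspace $W'\subset V$ that $\FF_q$-spans $V$. Since each frame point lies in $\Sub'$, there exist scalars $\mu_1,\dots,\mu_{m+2}\in\FF_q^*$ with $\mu_iv_i\in W'$ for $i\leq m+1$ and $\mu_{m+2}v_{m+2}\in W'$. The vectors $\mu_1v_1,\dots,\mu_{m+1}v_{m+1}$ are $\FF_q$-, and a fortiori $\FF_{q'}$-, linearly independent in $W'$, hence form an $\FF_{q'}$-basis of $W'$. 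Expanding
\[
\mu_{m+2}v_{m+2}\;=\;\sum_{i=1}^{m+1}\frac{\mu_{m+2}}{\mu_i}\,(\mu_iv_i),
\]
the coefficients must lie in $\FF_{q'}$, which forces $\mu_{m+2}/\mu_i\in\FF_{q'}$ for every $i$. Rescaling $W'$ by $\mu_{m+2}^{-1}$, a rescaling that leaves its projectivisation unchanged, then yields $W'=W$, so $\Sub'=\Sub$.

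\textbf{Main obstacle.} The only delicate point is the uniqueness step: one must exploit the ``extra'' $(m+2)$nd frame point to rigidify the $\FF_q^*$-scaling freedom of $W'$, via the derived identity $\mu_{m+2}/\mu_i\in\FF_{q'}$. Once this observation is in place, the remainder is bookkeeping identical to the $\rho=1$ case treated in \cite{BarwickEbert,Bruen}, confirming the authors' remark that those proofs extend verbatim to arbitrary subgeometries.
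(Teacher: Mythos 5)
Your proof is correct and takes exactly the approach the paper indicates: passing to the underlying $\FF_q$-vector space, constructing the candidate $\FF_{q'}$-subspace $W$ after normalising the frame vectors so that $v_{m+2}=\sum_i v_i$, and then using the $(m+2)$nd frame point to pin down the $\FF_q^*$-scaling freedom via $\mu_{m+2}/\mu_i\in\FF_{q'}$. This is precisely the generalisation of the Baer-case argument from \cite{BarwickEbert,Bruen} that the paper invokes without spelling out.
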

	
	Basically, every choice of frame in $\pg(m,q)$ and subfield in $\FF_q$ results in finding a unique subgeometry defined over that subfield and containing the frame.
	
	\bigskip
	From this, we can deduce the following property.
	
	\begin{prelm}\label{Lm_UniqueSubThroughHyperSubAndLine}
		Let $m\in\NNnot$ and suppose $q=(q')^{\rho+1}$.
		Consider an $(m-1)$-dimensional $q'$-subgeometry $\HyperSub$ of $\pg(m,q)$ and define $\Sigma:=\vspanq{\HyperSub}$.
		Let $\mathfrak{L}$ be a $q'$-subline of $\pg(m,q)$ having a point in common with $\HyperSub$ and spanning a line $\ell\nsubseteq\Sigma$.
		Then there exists a unique $q'$-subgeometry containing both $\HyperSub$ and $\mathfrak{L}$.
	\end{prelm}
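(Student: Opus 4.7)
The plan is to combine two ingredients: for existence, a direct construction at the level of the underlying vector space $V$ of $\pg(m,q)$, producing an $\FF_{q'}$-form of $V$ by gluing together forms of the subspaces under $\Sigma$ and $\ell$; for uniqueness, an appeal to Lemma~\ref{Lm_UniqueSubThroughFrame} applied to a carefully chosen frame of $\pg(m,q)$ lying inside $\HyperSub\cup\mathfrak{L}$.

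\smallskip
\emph{Existence.} Denote by $U,L\subseteq V$ the $\FF_q$-subspaces corresponding to $\Sigma$ and $\ell$. As $\ell\not\subseteq\Sigma$, one has $U+L=V$, and $U\cap L$ is the one-dimensional $\FF_q$-subspace representing $P$. Let $W'\subset U$ and $W''\subset L$ be $\FF_{q'}$-forms realising $\HyperSub$ and $\mathfrak{L}$. Rescaling $W''$ by a suitable element of $\FF_q^*$ (which leaves $\mathfrak{L}$ unchanged), a common representative $p\in W'\cap W''$ of $P$ can be arranged, and then $W'\cap W''=\FF_{q'}p$. Thus $W:=W'+W''$ has $\FF_{q'}$-dimension $m+1$, and its $\FF_q$-span equals $U+L=V$; so $W$ is an $\FF_{q'}$-form of $V$. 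The corresponding $q'$-subgeometry $\Sub$ of $\pg(m,q)$ contains $\HyperSub$ and $\mathfrak{L}$, since $W\supseteq W'$ and $W\supseteq W''$.

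\smallskip
\emph{Uniqueness.} Choose a frame $\{F_1,\ldots,F_m,P\}$ of $\HyperSub$ in which $F_1,\ldots,F_m$ form a basis and $P$ has all nonzero coordinates in that basis (any point of $\HyperSub$ may serve as the ``unit point'' of some such frame, so this is possible). Pick two distinct $Q_1,Q_2\in\mathfrak{L}\setminus\{P\}$, possible since $|\mathfrak{L}|=q'+1\geq 3$. Set $\mathcal{F}:=\{F_1,\ldots,F_m,Q_1,Q_2\}\subseteq\HyperSub\cup\mathfrak{L}$. I claim $\mathcal{F}$ is a frame of $\pg(m,q)$: the $(m+1)$-subset omitting $Q_i$ spans $\vspanq{\Sigma,Q_{3-i}}=\pg(m,q)$ because $Q_{3-i}\notin\Sigma$; the one omitting $F_j$ spans $\vspanq{\Pi_j,\ell}$ with $\Pi_j:=\vspanq{F_1,\ldots,\widehat{F_j},\ldots,F_m}\subset\Sigma$. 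Now an $\FF_{q'}$-basis of $W'$ representing $F_1,\ldots,F_m$ is simultaneously an $\FF_q$-basis of $U$, and the frame property of $\{F_1,\ldots,F_m,P\}$ in $\HyperSub$ forces all coordinates of $P$ to be nonzero in this basis; hence $P\notin\Pi_j$. Combined with $\ell\cap\Sigma=\{P\}$ this yields $\Pi_j\cap\ell=\emptyset$ and so $\vspanq{\Pi_j,\ell}=\pg(m,q)$. Any $q'$-subgeometry containing $\HyperSub\cup\mathfrak{L}$ therefore contains $\mathcal{F}$, so by Lemma~\ref{Lm_UniqueSubThroughFrame} it equals the $\Sub$ constructed above.

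\smallskip
The main technical obstacle is verifying the frame property of $\mathcal{F}$, specifically that $P\notin\Pi_j$: this is where the frame structure of $\{F_1,\ldots,F_m,P\}$ inside $\HyperSub$ interacts with the hypothesis $\ell\not\subseteq\Sigma$. A short separate argument handles the degenerate case $m=1$, in which $\HyperSub=\{P\}$, $\mathcal{F}$ reduces to three distinct points of the line $\pg(1,q)$, and Lemma~\ref{Lm_UniqueSubThroughFrame} applies directly.
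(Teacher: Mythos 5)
Your proof is correct, and for the \emph{uniqueness} half it follows the same route as the paper: both arguments pick a frame $\{P,F_1,\dots,F_m\}$ of $\HyperSub$ together with two points $Q_1,Q_2\in\mathfrak{L}\setminus\{P\}$, observe that $\mathcal{F}=\{F_1,\dots,F_m,Q_1,Q_2\}$ is a frame of $\pg(m,q)$, and invoke Lemma~\ref{Lm_UniqueSubThroughFrame}. The \emph{existence} half is where you diverge. The paper takes $\Sub$ to be the unique subgeometry through $\mathcal{F}$ supplied by Lemma~\ref{Lm_UniqueSubThroughFrame}, lets $\HyperSub'=\vspan{F_1,\dots,F_m}$ and $\mathfrak{L}'=\vspan{Q_1,Q_2}$ be the induced hyperplane and line of $\Sub$, and uses Grassmann inside $\Sub$ to show they meet in $\Sigma\cap\ell=P$; this forces $\HyperSub'=\HyperSub$ and $\mathfrak{L}'=\mathfrak{L}$ by a second appeal to Lemma~\ref{Lm_UniqueSubThroughFrame}, so $\Sub\supseteq\HyperSub\cup\mathfrak{L}$. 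You instead build an $\FF_{q'}$-form $W=W'+W''$ of $V$ directly, which is more elementary (no Grassmann needed, no second frame argument needed) and makes the containments $\Sub\supseteq\HyperSub,\mathfrak{L}$ immediate from $W\supseteq W',W''$. Another genuine difference: the paper simply asserts that $\mathcal{F}$ is a frame of $\pg(m,q)$, whereas you verify it, and the non-trivial step (that $P\notin\Pi_j$, which rests on $P$ being chosen with all coordinates nonzero relative to $F_1,\dots,F_m$ and on the fact that $\Pi_j\cap\HyperSub$ is exactly the corresponding hyperplane of $\HyperSub$) is exactly the point the paper glosses over. Finally, you are right that the argument degenerates at $m=1$ (a frame of $\pg(0,q')$ has only one point available) and needs the separate remark you give; the paper does not address this case.
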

	\begin{proof}
		Let $P$ be the unique point in $\HyperSub\cap\mathfrak{L}$.
		Consider a frame $\Frame_\HyperSub:=\{P,P_1,P_2,\dots,P_m\}$ of $\HyperSub$.
		Naturally, $\Frame_\HyperSub$ is a frame of $\Sigma$ as well, uniquely determining, by Lemma \ref{Lm_UniqueSubThroughFrame}, the $(m-1)$-dimensional $q'$-subgeometry $\HyperSub$.
		For any two distinct points $Q_1,Q_2$ of $\mathfrak{L}\setminus\{P\}$, the set $\Frame:=\{P_1,P_2,\dots,P_m\}\cup\{Q_1,Q_2\}$ is a frame of $\pg(m,q)$.
		Hence, by Lemma \ref{Lm_UniqueSubThroughFrame}, there exists a unique $m$-dimensional $q'$-subgeometry $\Sub$ containing each point of $\Frame$.
		
		Note that the set $\{P_1,P_2,\dots,P_m\}$ spans an $(m-1)$-dimensional $q'$-subgeometry $\HyperSub'\subseteq\Sub$ and that the set $\{Q_1,Q_2\}$ spans a $q'$-subline $\mathfrak{L}'\subseteq\Sub$.
		As $\HyperSub'$ (spanning $\Sigma$) plays the role of an $(m-1)$-dimensional subspace of $\Sub$, and $\mathfrak{L}'$ (spanning $\ell$) plays the role of a line of $\Sub$, by Grassmann's identity, these two objects have a point of $\Sub$ in common, necessarily equal to $\Sigma\cap \ell=P$.
		This means that $\Frame_\HyperSub\subseteq\HyperSub'$ and thus, by Lemma \ref{Lm_UniqueSubThroughFrame}, $\HyperSub=\HyperSub'$.
		The same holds for $\mathfrak{L}'$ and $\mathfrak{L}$, as both contain the frame $\{P,Q_1,Q_2\}$.
	\end{proof}
	
	Now we introduce the most important definition of this work.
	
	\begin{predf}
		Let $\Sat$ be a point set of $\pg(\n,q)$.
		\begin{enumerate}
			\item A point $P\in\pg(\n,q)$ is said to be $\rho$-\emph{saturated} by $\Sat$ (or, conversely, the set $\Sat$ $\rho$-\emph{saturates} $P$) if there exists a subspace through $P$ of dimension at most $\rho$ that is spanned by points of $\Sat$.
			If $\rho$ is clear from context, the prefix `$\rho-$' is often omitted.
			\item The set $\Sat$ is a $\rho$-\emph{saturating set} of $\pg(\n,q)$ if $\rho$ is the least integer such that all points of $\pg(\n,q)$ are $\rho$-saturated by $\Sat$.
		\end{enumerate}
	\end{predf}
	
	As reasoned in Section \ref{Sect_Mot} (see also Subsection \ref{Subsec_CovCod}), it is justifiable to study small $\rho$-saturating sets of $\pg(\n,q)$, as these objects give rise to covering codes with good properties.
	In light of this, we will adopt the following notation, which is widely used in the literature (e.g.\ \cite{BartoliEtAl1,BartoliEtAl2,DavydovEtAl2,DavydovEtAl3}).
	
	\begin{prent}
		$\satbound(\n,\rho):=\min\left\{|\Sat|:\Sat\textnormal{ is a }\rho\textnormal{-saturating set of }\pg(\n,q)\right\}$.
	\end{prent}
	
	The main research concerning saturating sets focuses on finding small upper bounds on $\satbound(\n,\rho)$.
	
	\section{Outline and main results}\label{Sect_Outline}
	
	General preliminaries can be found in Section \ref{Sect_Prel}.
	Section \ref{Sect_CovCod} formalises the correspondence between saturating sets and covering codes.
	In light of this, relevant definitions and notation within a coding theoretical context are introduced.
	Furthermore, we state a quasi-trivial lower bound on the size of a $\rho$-saturating set, which naturally gives rise to the main research goal of this topic (see Open Problem \ref{Prob_Main}).
	The core of this work can be found in Section \ref{Sect_Subgeometric}, although Section \ref{Sect_PointLine} in itself presents an interesting, stand-alone result.
	
	\bigskip
	Section \ref{Sect_PointLine} describes an isomorphism between two point-line geometries.
	One is the linear representation $T^*(\AffineHyperSub_{\rho,m,q'})$ of a subgeometry $\AffineHyperSub_{\rho,m,q'}\cong\pg(\rho,q')\subseteq\pg\big(\rho+1,(q')^m\big)$.
	The other is a newly introduced point-line geometry $Y(\rho,m,q')$, embedded in $\pg\big(m,(q')^{\rho+1}\big)$, of which the lines are affine parts of subgeometries isomorphic to $\pg(m,q')$ (see Definition \ref{Def_PointLineY}).
	
	\begin{repthm}{Thm_SubgeometriesAreAffineLines}
		Let $m\in\NNnot$.
		Then the point-line geometries $Y(\rho,m,q')$ and $T^*(\AffineHyperSub_{\rho,m,q'})$ are isomorphic.
	\end{repthm}
	
	Consequently, one can transfer natural notions of parallelism and independence of concurrent lines from $T^*(\AffineHyperSub_{\rho,m,q'})$ to $Y(\rho,m,q')$ (see Subsection \ref{Subsec_ParallelismIndependence}).
	As a side note, we make the reader aware of the existence of an explicit isomorphism between this newly introduced point-line geometry $Y(\rho,m,q')$ and the point-line geometry $X(\rho,m,q')$ introduced by De Winter, Rottey and Van de Voorde \cite{DeWinterRotteyVandeVoorde} (see Subsection \ref{Subsec_IsomorphismXFieldRed}).
	
	\bigskip
	Section \ref{Sect_Subgeometric} discusses the main result of this work by presenting a general upper bound on $\satbound(\n,\rho)$, $q=(q')^{\rho+1}$. This is obtained by constructing a $\rho$-saturating set of $\pg(\n,q)$ as a mix of several distinct, partially overlapping $q'$-subgeometries.
	The technique used to prove the saturation property of this construction relies on the results obtained in Section \ref{Sect_PointLine}.
	
	Although a precise and extensive upper bound is given by Theorem \ref{Thm_MainUpperBoundSubgeometric}, we present the following consequence as our main result, which is slightly weaker but far easier to comprehend.
	
	\begin{repthm}{Thm_MainUpperBoundSubgeometricRhoLargerThan1}
	    Let $1<\rho<\n$ and let $q=(q')^{\rho+1}$ for any prime power $q'$.
		Then
		\[
		    \satbound(\n,\rho) \leq \frac{(\rho+1)(\rho+2)}{2}(q')^{\n-\rho} + \rho(\rho+1)\frac{(q')^{\n-\rho}-1}{q'-1}\textnormal{.}
		\]
	\end{repthm}
	
	Translating the result above in coding theoretical terminology (see Subsection \ref{Subsec_CovCod}), one obtains the following.
	
	\begin{repcrl}{Crl_MainUpperBoundSubgeometricRLargerThan2}
	    Let $2<R<\red$ and let $q=(q')^R$ for any prime power $q'$.
	    Then
	    \[
		    \lenfunc(\red,R) \leq \frac{R(R+1)}{2}(q')^{\red-R} + (R-1)R\frac{(q')^{\red-R}-1}{q'-1}\textnormal{.}
		\]
		For any infinite family of covering codes of length equal to the upper bound above, the following holds for its asymptotic covering density:
		\[
		    \covdeninf(R)<\frac{\big((R-1)R\big)^R}{R!}\left(1+\frac{1}{q'}+\dots+\frac{1}{(q')^{R-1}}\right)^R<\left(e(R-1)\frac{q-1}{q-(q')^{R-1}}\right)^R\textnormal{,}
		\]
		with $e\approx2.718...$ being Euler's number.
	\end{repcrl}
	
	Plenty of extensive research has already been done concerning the topic of saturating sets and covering codes.
	Therefore, Section \ref{Sect_Comp} provides a careful comparison between our main results and relevant known results from the literature.
	
	\section{Covering codes and the research goal}\label{Sect_CovCod}
	
	Based on the way to approach this topic of research, the literature is divided.
	On the one hand, one can observe the topic geometrically by analysing small $\rho$-saturating sets of $\pg(\n,q)$.
	On the other hand, one can convert this geometrical point of view to a coding theoretical one by investigating \emph{covering codes} of small length.
	
	\subsection{Translation to covering codes}\label{Subsec_CovCod}
	
	In this subsection, we aim to formalise the correspondence between saturating sets and covering codes described in Section \ref{Sect_Mot}.
	
	\begin{df}
	    A $q$-ary linear code of length $\len$ and codimension (redundancy) $\red$ is said to have \emph{covering radius} $R$ if $R$ is the least integer such that every vector of $\FF_q^\len$ lies within Hamming distance\footnote{The Hamming distance between two vectors of $\FF_q^\len$ equals the number of positions in which they differ.} $R$ of a codeword.
	    Such a code will be called an $[\len,\len-\red]_qR$ code.
	\end{df}
	
	Whenever linear codes are investigated with the goal of optimising the length or (co)dimension with respect to the covering radius, such codes are often called \emph{covering codes}.
	These type of $q$-ary linear codes have a wide range of applications; for a description of several examples of such applications, see \cite[Section $1$]{DavydovEtAl1}.
	
	Suppose that $\Sat$ is a point set of $\pg(\red-1,q)$ of size $\len$ and let $H$ be a $q$-ary $(\red\times\len)$-matrix with the homogeneous coordinates of the points of $\Sat$ as columns.
	Then $\Sat$ is an $(R-1)$-saturating set of $\pg(\red-1,q)$ if and only if $H$ is a parity check matrix of an $[\len,\len-\red]_qR$ code.
	This describes a one-to-one correspondence between saturating sets of projective spaces and linear covering codes.
	More specifically, any $\rho$-saturating set $\Sat$ of $\pg(\n,q)$ corresponds to an $[\len,\len-\red]_qR$ code with
	\[
	    \len=|\Sat|\textnormal{,}\qquad\red=\n+1\quad\textnormal{and}\quad R=\rho+1\textnormal{.}
	\]
	
	Due to this correspondence, the problem of finding small $\rho$-saturating sets in $\pg(\n,q)$ can be translated to finding $[\len,\len-\red]_qR$ codes of small length.
	In light of this, we adopt the notation of the \emph{length function} $\lenfunc(\red,R)$.
	
	\begin{df}[\cite{BrualdiEtAl,CohenEtAl}]
	    The \emph{length function} $\lenfunc(\red,R)$ is the smallest length of a $q$-ary linear code with covering radius $R$ and codimension $\red$.
	\end{df}
	
	Note that
	\[
	    \lenfunc(\red,R)=\satbound(\red-1,R-1)\textnormal{.}
	\]
	
	From a coding theoretical perspective, it is interesting to analyse the extent to which the spheres of radius $R$ centered at the codewords of an $[\len,\len-\red]_qR$ code $\mathcal{C}$ overlap.
	This is done by investigating the \emph{covering density} $\covden(\len,\red,\mathcal{C})$ of the code $\mathcal{C}$, which equals the ratio of the total volume of these $q^{\len-\red}$ spheres to the volume of the space $\FF_q^\len$ (see e.g.\ \cite{BartoliEtAl1,BartoliEtAl2,BartoliEtAl3,Davydov1,Davydov2,DavydovEtAl1}):
	\[
	    \covden(\len,\red,\mathcal{C}):=\frac{1}{q^\len}\left(q^{\len-\red}\sum_{i=0}^R(q-1)^i\binom{\len}{i}\right)=\frac{1}{q^\red}\sum_{i=0}^R(q-1)^i\binom{\len}{i}\geq1\textnormal{.}
	\]
	Note that the latter inequality is sharp if and only if $\mathcal{C}$ is a perfect code.
	
	Analogous to the work of Davydov et al.\ \cite{DavydovEtAl1}, for a given $R\in\{1,\dots,\len\}$ and fixed prime power $q$, we will call an infinite sequence of $q$-ary linear $[\len,\len-\red_\len]_qR$ codes $\mathcal{C}_\len$ an \emph{infinite family of covering codes}, and denote such a family with $\mathcal{A}_{R,q}$.
	Given such an infinite family of covering codes $\mathcal{A}_{R,q}$, the asymptotic behaviour of the covering density when $\len$ tends to infinity is a topic of investigation.
	In light of this, we define its \emph{asymptotic covering densities}
	\begin{itemize}
	    \item $\covdeninf(R,\mathcal{A}_{R,q}):=\liminf\limits_{\len\rightarrow\infty}\covden(\len,R,\mathcal{C}_\len)$, and
	    \item $\covdensup(R,\mathcal{A}_{R,q}):=\limsup\limits_{\len\rightarrow\infty}\covden(\len,R,\mathcal{C}_\len)$.
	\end{itemize}
	If the infinite family of covering codes $\mathcal{A}_{R,q}$ is clear from context, we will write $\covdeninf(R)$ (respectively $\covdensup(R)$) instead of $\covdeninf(R,\mathcal{A}_{R,q})$ (respectively $\covdensup(R,\mathcal{A}_{R,q})$).
	
	\bigskip
	As the authors of \cite{DavydovEtAl1} point out, given an infinite family $\mathcal{A}_{R,q}$ for which $r_{n+1}<r_n$ for some $n$, one can replace $\mathcal{C}_{n+1}$ by any $1$-extension of $\mathcal{C}_n$ to obtain a code with a better covering density.
	Hence one may assume that the sequence of codimensions $r_n$ of $\mathcal{A}_{R,q}$ is non-decreasing.
	In light of this, a code $\mathcal{C}_n$ is called a \emph{supporting code} of $\mathcal{A}_{R,q}$ if $r_n>r_{n-1}$ (and a \emph{filling code} otherwise).
	
	Davydov et al.\ \cite{DavydovEtAl1} introduce and solve two open problems.
	The first open problem concerns the search for an infinite family of covering codes $\mathcal{A}_{R,q}$ for which $\covdensup(R,\mathcal{A}_{R,q})=\mathcal{O}(q)$; in this case, the corresponding family $\mathcal{A}_{R,q}$ is said to be \emph{optimal} \cite[Open Problem $1$]{DavydovEtAl1}.
	The authors note that, in order to solve this open problem, it suffices to find a solution to the following:
	
	\begin{prob}[{\cite[Open Problem $2$]{DavydovEtAl1}}]
	    For any covering radius $R\geq2$, construct $R$ infinite families of covering codes $\mathcal{A}_{R,q}^{(0)},\mathcal{A}_{R,q}^{(1)},\dots,\mathcal{A}_{R,q}^{(R-1)}$ such that for each $\gamma=0,1,\dots,R-1$ the supporting codes of $\mathcal{A}_{R,q}^{(\gamma)}$ are $[\len_u,\len_u-\red_u]_qR$ codes with codimension $\red_u=Ru+\gamma$ and length $\len_u=f_q^{(\gamma)}(\red_u)$ with $f_q^{(\gamma)}(\red)=\mathcal{O}\left(q^\frac{\red-R}{R}\right)$ for any $u\geq u_0$, with $u_0$ a constant which may depend on the family.
	\end{prob}
	
	The authors of \cite{DavydovEtAl1} managed to solve this open problem for arbitrary covering radius $R\geq2$ and $q=(q')^R$ (see \cite[Section $4$]{DavydovEtAl1} and Subsection \ref{Subsec_KnownApproaches}).
	We managed to do the same, in a somewhat more effective way, as we could take $u_0=1$ independent of the infinite family of covering codes, and in most cases found a substantially smaller polynomial function $f_q^{(\gamma)}(\red)$ in $q$ of which the leading coefficient is quadratic in $R$.
	As a consequence, the infinite families of covering codes we obtained have a generally improved asymptotic covering density (see Subsection \ref{Subsec_KnownApproaches}).
	
	\subsection{A lower bound on $\boldsymbol{\satbound(\n,\rho)}$ and $\boldsymbol{\lenfunc(\red,R)}$}\label{Subsec_LowBound}
	
	In order to know which saturating sets are viewed as being `small', we will be guided by the following lower bound on the size of arbitrary $\rho$-saturating sets.
	Several variants of this bound were already known in the literature \cite{BartoliEtAl1,BartoliEtAl2,BartoliEtAl3,DavydovEtAl1,DavydovEtAl2,DavydovEtAl3}, but some only state the bound for specific values of $\rho$, while others describe an approximate lower bound for large values of $q$.
	
	\begin{prop}\label{Prop_LowerBound}
		Let $\Sat$ be a $\rho$-saturating set of $\pg(\n,q)$, $\rho\leq \n$.
		Then
		\[
		    |\Sat|>\frac{\rho+1}{e}\cdot q^{\frac{\n-\rho}{\rho+1}}+\frac{\rho}{2}\textnormal{,}
		\]
		with $e\approx2.718...$ being Euler's number.
	\end{prop}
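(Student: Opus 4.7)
The argument is a volume (double-counting) bound, tightened by AM-GM and Stirling. I would lay it out as follows.

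First, I would count incidences between points of $\pg(\n,q)$ and $\rho$-dimensional subspaces spanned by subsets of $\Sat$. By definition of a $\rho$-saturating set, every one of the $\theta_\n$ points lies in at least one subspace of dimension at most $\rho$ spanned by points of $\Sat$, and each such subspace is contained in some $\rho$-dimensional subspace generated by $\rho+1$ points of $\Sat$. There are at most $\binom{|\Sat|}{\rho+1}$ such $(\rho+1)$-tuples, and each spans a subspace containing at most $\theta_\rho$ points. Hence
\[
    \binom{|\Sat|}{\rho+1}\,\theta_\rho \;\geq\; \theta_\n\textnormal{,}
\]
so that $\binom{|\Sat|}{\rho+1} \geq \frac{q^{\n+1}-1}{q^{\rho+1}-1}$. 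A direct manipulation shows that this ratio is strictly greater than $q^{\n-\rho}$ (checking $q^{\n+1}-1>q^{\n-\rho}(q^{\rho+1}-1)$ amounts to $q^{\n-\rho}>1$, which holds for $\rho<\n$; the case $\rho=\n$ is separately trivial since one needs at least $\n+1$ independent points to span $\pg(\n,q)$).

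Next I would convert this binomial estimate into a lower bound on $|\Sat|$. Setting $s:=|\Sat|$, the AM-GM inequality applied to $s,s-1,\dots,s-\rho$ gives
\[
    \prod_{i=0}^{\rho}(s-i) \;\leq\; \left(s-\tfrac{\rho}{2}\right)^{\rho+1}\textnormal{,}
\]
since the arithmetic mean of these $\rho+1$ consecutive reals is exactly $s-\rho/2$. Combined with the incidence bound, this yields
\[
    \left(s-\tfrac{\rho}{2}\right)^{\rho+1} \;\geq\; (\rho+1)!\,\binom{s}{\rho+1} \;>\; (\rho+1)!\,q^{\n-\rho}\textnormal{.}
\]
Finally, I would invoke Stirling's bound $(\rho+1)! > \bigl(\tfrac{\rho+1}{e}\bigr)^{\rho+1}$ (valid for all $\rho\geq 0$) and take $(\rho+1)$-th roots, producing
\[
    s-\tfrac{\rho}{2} \;>\; \tfrac{\rho+1}{e}\,q^{\frac{\n-\rho}{\rho+1}}\textnormal{,}
\]
which rearranges to the claimed inequality.

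There is no real obstacle here; the bound is essentially a sphere-covering argument and the only step requiring any care is propagating the strict inequalities correctly (the strictness is supplied by $\theta_\n/\theta_\rho>q^{\n-\rho}$ and by Stirling's strict lower bound on the factorial). A minor point worth checking is that the inequality $(\rho+1)!>\bigl((\rho+1)/e\bigr)^{\rho+1}$ holds uniformly in $\rho\in\NN$, including the degenerate small values; this is standard. Everything else is routine manipulation.
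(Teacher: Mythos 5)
Your proof is correct and takes essentially the same route as the paper's: the sphere-covering bound $\binom{|\Sat|}{\rho+1}\theta_\rho\geq\theta_\n$, AM-GM applied to $|\Sat|,|\Sat|-1,\dots,|\Sat|-\rho$, and the Stirling-type bound $(\rho+1)!>\bigl((\rho+1)/e\bigr)^{\rho+1}$, combined in a slightly different order. The only cosmetic difference is that your separate handling of $\rho=\n$ is unnecessary—the paper keeps $\theta_\n/\theta_\rho\geq q^{\n-\rho}$ non-strict (it becomes equality at $\rho=\n$) and obtains the strictness of the final inequality entirely from the strict Stirling bound.
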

	\begin{proof}
		Note that $|\Sat|\geq\rho+1$.
		Indeed, if this would not be the case, all points of $\Sat$ would span a subspace of dimension at most $\rho-1<\n$.
		In such a situation, it is impossible for $\Sat$ to saturate all points of the $\n$-dimensional projective geometry $\pg(\n,q)$, a contradiction.
		
		Hence, we can consider the set $\Pi_{\leq\rho}$ of all subspaces spanned by $\rho+1$ distinct points of $\Sat$; these subspaces are each of dimension at most $\rho$.
		As $\Sat$ saturates $\pg(\n,q)$, we know that $\Pi_{\leq\rho}$ has to cover the latter, thus
		\[
		    \binom{|\Sat|}{\rho+1}\theta_{\rho}\geq\theta_\n\textnormal{.}
		\]
		Expanding the binomial above and rearranging the inequality, we get
		\begin{equation}\label{Eq_Approx}
		    |\Sat|\left(|\Sat|-1\right)\left(|\Sat|-2\right)\cdots\left(|\Sat|-\rho\right)\geq(\rho+1)!\cdot\frac{\theta_\n}{\theta_\rho}\geq(\rho+1)!\cdot q^{\n-\rho}\textnormal{,}
		\end{equation}
		the last inequality being valid if and only if $\rho\leq\n$.
		
		Note that the map $f:\NNnot\rightarrow\RR:\n\mapsto\frac{\sqrt[\n]{\n!}}{\n}$ is strictly decreasing, with $\lim\limits_{\n\rightarrow\infty}f(\n)=\frac{1}{e}$.
		Therefore, we know that $f(\n)>\frac{1}{e}$ for all $\n\in\NNnot$, or, equivalently, $\sqrt[\n]{\n!}>\frac{\n}{e}$.
		Combining this with \eqref{Eq_Approx}, after taking the $(\rho+1)^\textnormal{th}$ root of the left- and right-hand side, we obtain
		\[
		    \sqrt[\rho+1]{|\Sat|\left(|\Sat|-1\right)\left(|\Sat|-2\right)\cdots\left(|\Sat|-\rho\right)}>\frac{\rho+1}{e}\cdot q^{\frac{\n-\rho}{\rho+1}}\textnormal{.}
		\]
		Applying the AM-GM inequality to the left-hand side finishes the proof.
	\end{proof}
	
	Roughly speaking, Proposition \ref{Prop_LowerBound} implies that
	\begin{equation}\label{Eq_MainGoal}
	    \satbound(\n,\rho)\geq c\cdot\rho\,q^\frac{\n-\rho}{\rho+1}\textnormal{,}\qquad\textnormal{or, equivalently,}\qquad\lenfunc(\red,R)\geq c\cdot R\,q^\frac{\red-R}{R}\textnormal{,}
	\end{equation}
	for any $\n$, $\rho$ and $q$ (equivalently, for any $\red$, $R$ and $q$), where $c>\frac{1}{3}$ is a constant independent of these parameters.
	Naturally, researchers aim to prove that \eqref{Eq_MainGoal} is sharp by constructing small $\rho$-saturating sets of $\pg(\n,q)$ or, equivalently, constructing $[\len,\len-\red]_qR$ covering codes of small length.
	This gives rise to Open Problem \ref{Prob_Main}.
	
	\section{Comparison with relevant known results}\label{Sect_Comp}
	
	In Subsection \ref{Subsec_LowBound}, we deduced a general lower bound on the size of a saturating set (equivalently, on the length of a covering code).
	This gives rise to the following.
	
	\begin{prob}\label{Prob_Main}
	    Find a value $c>0$, independent of $q$ (and preferably independent of $\n$ and $\rho$ as well), such that
	    \[
	        \satbound(\n,\rho)\leq c\cdot\rho\,q^\frac{\n-\rho}{\rho+1}\textnormal{,}
	    \]
	    or, equivalently, find a value $c>0$, independent of $q$ (and preferably independent of $\red$ and $R$ as well), such that
	    \[
	        \lenfunc(\red,R)\leq c\cdot R\,q^\frac{\red-R}{R}\textnormal{.}
	    \]
	\end{prob}
	
	With the exception of Remark \ref{Rmk_AlmostSolved}, all mentioned results within this section solve the open problem above for specific values of $\n$, $\rho$ and $q$ (equivalently, $\red$, $R$ and $q$), some in a more effective way than others.
	To give an overview, Open Problem \ref{Prob_Main} is solved if
	\begin{enumerate}
	    \item $\n+1\equiv0\pmod{\rho+1}$ (equivalently, $\red\equiv0\pmod{R}$); see Subsection \ref{Subsec_KnownDimension}.
	    \item $q=(q')^{\rho+1}$ (equivalently, $q=(q')^R$); see Subsection \ref{Subsec_KnownApproaches}.
	    \item $\n+1\equiv s\frac{\rho+1}{\rho'+1}\pmod{\rho+1}$ and $q=(q')^{\rho'+1}$, with $\rho'+1\mid\rho+1$ and $s\in\{1,2,\dots,\rho'\}$ (equivalently, $\red\equiv s\frac{R}{R'}\pmod{R}$ and $q=(q')^{R'}$, with $R'\mid R$ and $s\in\{1,2,\dots,R'-1\}$); see Remark \ref{Rmk_Section7}.
	\end{enumerate}
	
	\begin{rmk}\label{Rmk_AlmostSolved}
	    Some results present upper bounds that are slightly larger than the desired one described in Open Problem \ref{Prob_Main}.
    	More specifically, the authors of articles \cite{BartoliEtAl1,BartoliEtAl2,BartoliEtAl3,DavydovEtAl2,DavydovEtAl4}, some with the aid of computer searches, present upper bounds on $\satbound(\n,\rho)$, $\rho\in\{1,2\}$, of the following form:
    	\[
    	    \satbound(\n,\rho)\leq c\cdot q^\frac{\n-\rho}{\rho+1}\sqrt[\rho+1]{\ln{q}}\textnormal{,}
    	\]
    	with $c>0$ a constant independent of $\n$, and $\sqrt[\rho+1]{\ln{q}}$ a relatively small factor dependent on $q$.
    \end{rmk}
	
	One can immediately see that, if $q=(q')^{\rho+1}=(q')^R$, our main results (Theorem \ref{Thm_MainUpperBoundSubgeometricRhoLargerThan1} and Corollary \ref{Crl_MainUpperBoundSubgeometricRLargerThan2}) also solve Open Problem \ref{Prob_Main} for $c$ independent of $q$ and $\n$ (equivalently, independent of $q$ and $\red$), and linearly dependent on $\rho$ (equivalently, linearly dependent on $R$).
	In this section, we will carefully compare these new results to relevant known results from the literature.
	Depending on the setting of each of these relevant results, we will make the comparison from a geometrical point of view (Theorem \ref{Thm_MainUpperBoundSubgeometricRhoLargerThan1}) or a coding theoretical point of view (Corollary \ref{Crl_MainUpperBoundSubgeometricRLargerThan2}).
	
	\subsection{Known results with assumptions on $\boldsymbol{\n}$ and $\boldsymbol{\rho}$ (correspondingly $\boldsymbol{\red}$ and $\boldsymbol{R}$)}\label{Subsec_KnownDimension}
	
	A simple, recursive upper bound on $\satbound(\n,\rho)$ can be obtained geometrically by observing saturating sets in two disjoint subspaces spanning the ambient geometry.
	As stated in \cite[Theorem $5$]{DavydovOstergard}, the same bound arises from the direct sum construction of linear codes over a common finite field.
	
	\begin{res}[{\cite[Lemma $10$]{Ughi}}]\label{Res_RecursiveUpperBound}
		$\satbound(\n_1+\n_2+1,\rho_1+\rho_2+1)\leq\satbound(\n_1,\rho_1)+\satbound(\n_2,\rho_2)$.
	\end{res}
	
	\begin{crl}\label{Crl_TrivialUpperBound}
		Suppose that $\n+1$ is a multiple of $\rho+1$.
		Then $\satbound(\n,\rho)\leq(\rho+1)\theta_k$, with $k:=\frac{\n-\rho}{\rho+1}$.
	\end{crl}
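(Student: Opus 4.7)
The plan is to apply Result \ref{Res_RecursiveUpperBound} recursively $\rho$ times, anchored by a trivial base case.

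First, I would observe that $\satbound(k,0)=\theta_k$. Indeed, if $\Sat$ is a $0$-saturating set of $\pg(k,q)$, then every point of $\pg(k,q)$ must lie in a $0$-dimensional subspace spanned by points of $\Sat$, which is simply a single point of $\Sat$. Hence $\Sat$ must equal the entire point set of $\pg(k,q)$, so $|\Sat|=\theta_k$.

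Next, I would verify that the divisibility hypothesis is preserved under one recursive step. Taking $(\n_1,\rho_1)=(k,0)$ and $(\n_2,\rho_2)=(\n-k-1,\rho-1)$, Result \ref{Res_RecursiveUpperBound} yields
\[
    \satbound(\n,\rho)\leq \satbound(k,0)+\satbound(\n-k-1,\rho-1)=\theta_k+\satbound(\n-k-1,\rho-1)\textnormal{.}
\]
Using $(k+1)(\rho+1)=\n+1$, the new dimension parameter satisfies $\n_2+1=\n-k=(k+1)\rho=(k+1)(\rho_2+1)$, so the hypothesis ``dimension plus one is a multiple of saturation parameter plus one'' is inherited by the reduced problem, with the same value of $k$.

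Iterating this step $\rho$ times in total and invoking the base case at the end, I obtain the telescoping chain
\[
    \satbound(\n,\rho)\leq \theta_k+\satbound(\n-k-1,\rho-1)\leq\cdots\leq \rho\,\theta_k+\satbound(k,0)=(\rho+1)\theta_k\textnormal{,}
\]
as desired. There is no genuine obstacle here: both ingredients, namely the trivial identification $\satbound(k,0)=\theta_k$ and the self-similarity of the hypothesis under the recursive step of Result \ref{Res_RecursiveUpperBound}, are immediate, so the argument is essentially a bookkeeping induction on $\rho$.
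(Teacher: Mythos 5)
Your proof is correct and follows essentially the same route as the paper: the paper also inducts on $\rho$, applying Result \ref{Res_RecursiveUpperBound} in the form $\satbound(\n,\rho)\leq\satbound(\n-k-1,\rho-1)+\satbound(k,0)$ and anchoring on the trivial $\rho=0$ case. You merely make explicit a verification the paper leaves implicit, namely that the divisibility hypothesis (with the same $k$) is inherited by the reduced instance $(\n-k-1,\rho-1)$.
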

	\begin{proof}
		By induction on $\rho$.
		If $\rho=0$, this is a trivial statement.
		Inductively using Result \ref{Res_RecursiveUpperBound}, we obtain
		\[
		\satbound(\n,\rho)\leq\satbound(\n-k-1,\rho-1)+\satbound(k,0)\leq\rho\theta_k+\theta_k\textnormal{.}\qedhere
		\]
	\end{proof}
	
	As shown in the proof of Theorem \ref{Thm_MainUpperBoundSubgeometric}, the upper bound of Theorem \ref{Thm_MainUpperBoundSubgeometric} (and hence the one of Theorem \ref{Thm_MainUpperBoundSubgeometricRhoLargerThan1} as well) does not improve the upper bound above if $\n+1$ is a multiple of $\rho+1$.
	
	Although, for $\n+1$ a multiple of $\rho+1$, Corollary \ref{Crl_TrivialUpperBound} already solves Open Problem \ref{Prob_Main} (for $c$ independent of $\n$ and $\rho$), we want to stress that better upper bounds concerning this special case are known in the literature.
	Davydov \cite[{Theorem $5.1$}]{Davydov1} and Davydov and \"Osterg\aa rd \cite[{Theorem $7$}]{DavydovOstergard} slightly improved the bound above in case $k=1$ and $\rho=1,2$, respectively.
	The constructions behind these results are commonly denoted as the `oval plus line' and `two ovals plus line' constructions; in \cite[Theorems $6.1$ and $6.2$]{DavydovEtAl1}, these bounds are generalised.
	Davydov, Marcugini and Pambianco \cite[Theorem $1$]{DavydovEtAl3} managed to generalise this `oval(s) plus line' construction to a $\rho$-saturating set in $\pg(2\rho+1,q)$.
	Using a coding-theoretical tool called `$q^m$-concatenating constructions' \cite{Davydov1,Davydov2,DavydovEtAl1,DavydovEtAl3}, they generalised their results even further and improved the upper bound depicted in Corollary \ref{Crl_TrivialUpperBound} under some minor restrictions on the parameters.
	
	\bigskip
	Furthermore, the same authors obtained the following results for the case $R$ even, $r\equiv\frac{R}{2}\pmod{R}$ and compared them with \cite[Corollary $7.2$]{DavydovEtAl1}.
	
	\begin{res}[{\cite[Theorem $2$]{DavydovEtAl3}}]\label{Res_DavydovEtAlR2}
	    Let $R\geq2$ be even.
	    Let $p$ be prime, $q=p^{2\eta}$, $\eta\geq2$, $r=tR+\frac{R}{2}$, $t\geq1$.
	    The following constructive upper bounds on the length function hold:
	    \begin{enumerate}
	        \item $\lenfunc(\red,R)\leq R\left(1+\frac{\sqrt{q}-1}{\sqrt{q}(\phi(\sqrt{q})-1)}\right)q^\frac{\red-R}{R}+R\left\lfloor q^{\frac{\red-2R}{R}-\frac{1}{2}}\right\rfloor+\frac{R}{2}f_q(\red,R)$ if $p\geq3$, and
	        \item $\lenfunc(\red,R)\leq R\left(1+\frac{1}{p}+\frac{1}{\sqrt{q}}\right)q^\frac{\red-R}{R}+R\left\lfloor q^{\frac{\red-2R}{R}-\frac{1}{2}}\right\rfloor+\frac{R}{2}f_q(\red,R)$ if $p\geq7$,
	    \end{enumerate}
	    where $\phi(q)$ is the order of the largest proper subfield of $\FF_q$ and
	    \[
	        f_q(\red,R):=\begin{cases}0&\textnormal{if }t\notin\{4,6\}\textnormal{,}\\(q+1)q^{\frac{\red-4R}{R}-\frac{1}{2}}&\textnormal{if }t\in\{4,6\}\textnormal{.}\end{cases}
	    \]
	\end{res}
	
	One can check that Corollary \ref{Crl_MainUpperBoundSubgeometricRLargerThan2} does not improve this result, nor \cite[Corollary $7.2$]{DavydovEtAl1}, for given constraints on $\red$, $R$ and $q$.
	
	\subsection{Known results assuming $\boldsymbol{q=(q')^{\rho+1}=(q')^R}$}\label{Subsec_KnownApproaches}
	
	In this subsection, we discuss some relevant known results based on the assumption that $q=(q')^{\rho+1}$ (equivalently, $q=(q')^R$).
	This assumption allows mathematicians to exploit the use of $q'$-subgeometries.
	In the literature, one can notice two main approaches for constructing saturating sets using subgeometries; we will call these two approaches the \emph{strong blocking set approach} and the \emph{mixed subgeometry approach}.
	
	\subsubsection*{The strong blocking set approach}
	
	The \emph{strong blocking set approach} is based on constructing strong blocking sets in $\pg(\n,q')$. A $(\rho+1)$\emph{-fold strong blocking set} of $\pg(\n,q')$ is a point set that meets any $\rho$-dimensional subspace in a set of points spanning said subspace.
	Although $(\rho+1)$-fold strong blocking sets were introduced in \cite[Definition $3.1$]{DavydovEtAl1}, these are also known as \emph{generator sets} (\cite[Definition $2$]{FancsaliSziklai1}) or \emph{cutting blocking sets} (\cite[Definition $3.4$]{BoniniBorello}) in case $\rho=\n-1$.
	
	Strong blocking sets directly generate saturating sets, as one can prove that $(\rho+1)$-fold strong blocking sets are $\rho$-saturating sets of the ambient geometry $\pg\big(\n,(q')^{\rho+1}\big)$ \cite[Theorem $3.2$]{DavydovEtAl1}.
	This \emph{strong blocking set approach} led to several results solving Open Problem \ref{Prob_Main}, and are often generalised using $q^m$-concatenating constructions.
	For example, the following results consider the case $\red\not\equiv0\pmod{R}$ for $R=3$ (equivalently, $\n+1\not\equiv0\pmod{\rho+1}$ for $\rho=2$).
	
	\begin{res}[{\cite[Corollary $3.9$, Theorem $5.1$]{DavydovEtAl1}}]\label{Res_FourLinesQuadric}
	    Let $t\in\NNnot$ and $\red=3t+1$.
	    Suppose that $q=(q')^3$, with $q\geq64$ if $t>1$.
	    Then
	    \[
	        \lenfunc(\red,3) \leq 4(q')^{\red-3}+4(q')^{\red-4}\textnormal{, and }\covdeninf(3)<\frac{32}{3}+\frac{32}{q'}+\frac{32}{(q')^2}-\frac{64}{3q}\textnormal{.}
	    \]
	\end{res}
	
	\begin{res}[{\cite[Theorems $3.16$ and $5.2$]{DavydovEtAl1}}]\label{Res_NinePlanes}
	    Let $t\in\NNnot$ and $\red=3t+2$.
	    Suppose that $q=(q')^3$, with $q\geq27$ if $t>1$.
	    Then
	    \[
	        \lenfunc(\red,3) \leq 9(q')^{\red-3}-8(q')^{\red-4}+4(q')^{\red-5}\textnormal{, and }\covdeninf(3)<\frac{243}{2}-\frac{324}{q'}+\frac{72}{(q')^2}\textnormal{.}
	    \]
	\end{res}
	
	Our results (Corollary \ref{Crl_MainUpperBoundSubgeometricRLargerThan2}) imply that
	\begin{equation}\label{Eq_R3}
	    \lenfunc(\red,3)\leq6\frac{(q')^{\red-2}-1}{q'-1}\textnormal{, and }\covdeninf(3)<36\left(1+\frac{1}{q'}+\frac{1}{(q')^2}\right)^3
	\end{equation}
	and hence do not improve Result \ref{Res_FourLinesQuadric} if $\red=4$ or if $\red=3t+1$, $t>1$ and $q'\geq4$.
	However, in case $\red=3t+2$, the upper bound on $\lenfunc(\red,3)$ in \eqref{Eq_R3} does improve Result \ref{Res_NinePlanes} if $q'\geq5$; the upper bound on $\covdeninf(3)$ improves Result \ref{Res_NinePlanes} if $q'\geq7$.
	
	\bigskip
	More generally, the authors of \cite{DavydovEtAl1} presented the following.
	
	\begin{res}[{\cite[Theorem $3.15$]{DavydovEtAl1}}]\label{Res_BinomUpperBound}
		Let $\n>\rho+1$ and suppose $q=(q')^{\rho+1}$ for any prime power $q'$.
		Then
		\[
		    \satbound(\n,\rho) \leq \frac{\sum_{i=0}^{\n-\rho+1}(q'-1)^i\binom{\n+1}{i}-1}{q'-1}\sim\binom{\n+1}{\rho}(q')^{\n-\rho}\textnormal{.}
		\]
	\end{res}
	
	At first sight, our results (Theorem \ref{Thm_MainUpperBoundSubgeometricRhoLargerThan1}) present a significant improvement on the bound presented in Result \ref{Res_BinomUpperBound}, as the binomial coefficient $\binom{\n+1}{\rho}$ is reduced to $\frac{(\rho+1)(\rho+2)}{2}$.
	In fact, Davydov et al.\ \cite{DavydovEtAl1} speculated that their coefficient could be improved, as they mention this as an open problem.
	However, a certain degree of nuance is needed, as the following results present better bounds in case $\red\not\equiv0\pmod{R}$, $R\geq4$, and $\red$ is large enough (equivalently, in case $\n+1\not\equiv0\pmod{\rho+1}$, $\rho\geq3$, and $\n$ is large enough).
	
	\begin{res}[{\cite[Theorem $6.3$]{DavydovEtAl1}}]\label{Res_NmultipleOfrhoplus1}
	    Let $t\in\NN$ and $\red=Rt+1$ (equivalently, $\n=(\rho+1)t$) with $R\geq4$ (equivalently, $\rho\geq3$).
	    Suppose that $q=(q')^R=(q')^{\rho+1}$, $q'\geq4$, and choose $t_0\in\NN$ such that
	    \[
	        q^{t_0-1}\geq(q'-1)\left(\frac{R(R+1)}{2}-2\right)+R+5\textnormal{.}
	    \]
	    If $t\geq t_0$, then there exists an $[\len,\len-\red]_qR$ code with
	    \[
	        \len=\left(\frac{R(R+1)}{2}-2\right)(q')^{\red-R}-\left(\frac{(R-1)R}{2}-7\right)(q')^{\red-R-1}\textnormal{, hence }\lenfunc(\red,R)\leq\len\textnormal{.}
	    \]
	    This code corresponds to a $\rho$-saturating set of $\pg(\n,q)$, with $\n=\red-1$ and $\rho=R-1$, of size
	    \[
	        \len=\left(\frac{(\rho+1)(\rho+2)}{2}-2\right)(q')^{\n-\rho}-\left(\frac{\rho(\rho+1)}{2}-7\right)(q')^{\n-\rho-1}\textnormal{, hence }\satbound(\n,\rho)\leq\len\textnormal{.}
	    \]
	\end{res}
	
	Whether we compare this result with Theorem \ref{Thm_MainUpperBoundSubgeometricRhoLargerThan1} or Corollary \ref{Crl_MainUpperBoundSubgeometricRLargerThan2}, one can easily see that our results do not improve Result \ref{Res_NmultipleOfrhoplus1} for given restrictions on $\red$, $R$ (equivalently, on $\n$, $\rho$), $t$ and $q$.
	Notice that we omitted the case $t=1$ from the original result.
	However, we discuss this particular case in Remark \ref{Rmk_ImprovementBy1}.
	
	\begin{res}[{\cite[Theorem $6.4$]{DavydovEtAl1}}]\label{Res_NmultipleOfGamma}
	    Let $t\in\NN$ and $\red=Rt+\gamma$ (equivalently, $\n+1=(\rho+1)t+\gamma$), $\gamma\in\{2,3,\dots,R-1\}$ with $R\geq4$ (equivalently, $\rho\geq3$).
	    Suppose that $q=(q')^R=(q')^{\rho+1}$ and choose $t_0\in\NN$ such that
	    \[
	        q^{t_0-1}\geq\len_{R,q}^{(\gamma)}\textnormal{, with }\len_{R,q}^{(\gamma)}:=\sum_{i=0}^\gamma(q'-1)^i\binom{R+\gamma}{i+1}\sim\binom{R+\gamma}{R-1}(q')^\gamma\textnormal{.}
	    \]
	    If $t=1$ or $t\geq t_0$, then there exists an $[\len,\len-\red]_qR$ code with
	    \[
	        \len=\len_{R,q}^{(\gamma)}\cdot (q')^{\red-R-\gamma}+w\frac{(q')^{\red-R-\gamma}-1}{q-1}\textnormal{, }0\leq w\leq R-3\textnormal{, hence }\lenfunc(\red,R)\leq\len\textnormal{.}
	    \]
	    This code corresponds to a $\rho$-saturating set of $\pg(\n,q)$, with $\n=\red-1$ and $\rho=R-1$, of size
	    \[
	        \len=\len_{\rho+1,q}^{(\gamma)}\cdot (q')^{\n-\rho-\gamma}+w\frac{(q')^{\n-\rho-\gamma}-1}{q-1}\textnormal{, }0\leq w\leq\rho-2\textnormal{, hence }\satbound(\n,\rho)\leq\len\textnormal{.}
	    \]
	\end{res}
	
	In general, our results (Theorem \ref{Thm_MainUpperBoundSubgeometricRhoLargerThan1} or Corollary \ref{Crl_MainUpperBoundSubgeometricRLargerThan2}) improves the result above; this improvement increases as $\gamma$ grows.
	Moreover, the authors of \cite{DavydovEtAl1} note that the main term of the asymptotic covering density of the infinite family of covering codes arising from Result \ref{Res_NmultipleOfGamma} is equal to $\left(\frac{(R+\gamma)^{R-1}}{(R-1)!}\right)^R\cdot\frac{1}{R!}$, which is significant larger than $\frac{\big(R(R-1)\big)^R}{R!}$ (see Corollary \ref{Crl_MainUpperBoundSubgeometricRLargerThan2}).
	
	\begin{rmk}\label{Rmk_Section7}
	    Davydov, Giulietti, Marcugini and Pambianco \cite[Section $7$]{DavydovEtAl1} cleverly extended the results above.
	    More specifically, if $R'$ is a proper divisor of $R$, $\red=Rt+s\frac{R}{R'}$ ($s\in\{1,2,\dots,R'-1\}$) and $q=(q')^{R'}$, they managed to construct infinite families covering codes of length roughly equal to $\frac{R}{R'}\binom{R'+s}{R'-1}q^\frac{\red-R}{R}$ if $t=1$ or $t$ is large enough (Note that Result \ref{Res_DavydovEtAlR2} exists under the same conditions, for $R'=2$).
	    Under these conditions, our results present improvement if $q$ is an $R^\textnormal{th}$ power, $R'\geq4$ and
	    \begin{enumerate}
	        \item either $t>1$ is relatively small, or
	        \item $\frac{R+1}{2}<\frac{1}{R'}\binom{R'+s}{R'-1}$.
	    \end{enumerate}
	    The latter condition is true if $R'$ is a relatively large divisor of $R$ ($R'\gtrsim s\sqrt[s]{R}$).
	    In all other cases, the results of \cite[Section $7$]{DavydovEtAl1} are better than ours.
	    
	    We would shortly want to point out a misprint in \cite[Corollary $7.5$]{DavydovEtAl1}.
	    It should state that $\gamma\in\{2,3,\dots,R'-1\}$ instead of $\gamma\in\{2,3,\dots,R-1\}$.
	\end{rmk}
	
	Lastly, Result \ref{Res_FourLinesQuadric} arose by cleverly choosing four disjoint lines in $\pg(3,q')$ of which the union of points forms a $3$-fold strong blocking set (and by subsequently $q^m$-concatenating the covering code arising from the obtained $2$-saturating set).
	This idea of choosing pairwise disjoint $(\n-\rho)$-spaces of $\pg(\n,q')$, of which the union of points forms a $(\rho+1)$-fold strong blocking set, led to the following, more general results if $q'$ is large enough.
	
	\begin{res}[{\cite[Theorem $24$]{FancsaliSziklai1} and \cite[Proposition $10$, Subsection $3.4$]{FancsaliSziklai2}}]\label{Res_HiggledyPiggledy}~
	    \begin{enumerate}
	        \item Let $q=(q')^\n$, $q'\geq2\n-1$.
	        Then
	        \[
	            \satbound(\n,\n-1)\leq(2\n-1)(q'+1)\textnormal{.}
	        \]
	        \item Let $1<\rho<\n$ and $q=(q')^{\rho+1}$, $q'>\n+1$.
	        Then
	        \[
	            \satbound(\n,\rho)\leq\big((\n-\rho+1)\rho+1\big)\frac{(q')^{\n-\rho+1}-1}{q'-1}\textnormal{.}
            \]
	    \end{enumerate}
	\end{res}
	
	If $q'>\n+1$, one can check that our results (Theorem \ref{Thm_MainUpperBoundSubgeometricRhoLargerThan1}) improve Result \ref{Res_HiggledyPiggledy}($2.$) if and only if $\rho<\frac{2\n-1}{3}$.
	We discuss the comparison between our results and Result \ref{Res_HiggledyPiggledy}($1.$) in Remark \ref{Rmk_ImprovementBy1}.
	
	\subsubsection*{The mixed subgeometry approach}
	
	The \emph{mixed subgeometry approach} is based on constructing saturating sets as a union of several distinct subgeometries which are not part of a common, larger subgeometry.
	This approach was the main source of inspiration for this article.
	The technique is used much less than the \emph{strong blocking set approach}.
	In fact, Result \ref{Res_DavydovMixedSubInPlane} below is the only instance using the \emph{mixed subgeometry approach} that we encountered in the literature.
	
	\begin{res}[{\cite[Theorem $5.2$]{Davydov1}}]\label{Res_DavydovMixedSubInPlane}
		Let $q$ be square.
		Let $b_1$, $b_2$ and $b_3$ be three distinct Baer sublines spanning $\pg(2,q)$ and sharing a common point $P$, with the addition that $b_1$ and $b_2$ share a further point $Q\neq P$ as well.
		Then $(b_1\cup b_2\cup b_3)\setminus\{P\}$ is a $1$-saturating set of $\pg(2,q)$.
		As a consequence,
		\[
		    \satbound(2,1) \leq 3\sqrt{q}-1\textnormal{.}
		\]
	\end{res}
	
	Better bounds on $\satbound(2,1)$, $q$ square, are known (see \cite[Proposition $9$]{DavydovEtAl3} for an overview, which includes improvements arising from Result \ref{Res_DavydovEtAlR2}).
	Interestingly, as noted in \cite[Remarks $3$ and $4$]{DavydovEtAl3}, if $q$ equals the square of a prime number, no better bound on $\satbound(2,1)$ than the one depicted in Result \ref{Res_DavydovMixedSubInPlane} is known in the literature.
	Our interest was mainly peaked by the underlying (sub)geometric construction.
	In fact, Construction \ref{Constr_Subgeometric} is basically a highly generalised version of the construction described in Result \ref{Res_DavydovMixedSubInPlane}.
	
	\bigskip
	By making use of variations of $q^m$-concatenating constructions, the following bound is obtained, generalising the bound of Result \ref{Res_DavydovMixedSubInPlane}.
	
	\begin{res}[{\cite[Example $6$, Equation $(33)$]{Davydov2}}]\label{Res_DavydovMixedSubInPlaneGENERAL}
	    Let $\n$ be even and $q\geq16$ be square.
	    Then
	    \[
	        \satbound(\n,1) \leq (3\sqrt{q}-1)q^{\frac{\n}{2}-1}+\left\lfloor q^{\frac{\n}{2}-2}\right\rfloor\textnormal{.}
	    \]
	\end{res}
	
	Theorem \ref{Thm_MainUpperBoundSubgeometric} implies, for $\n$ even and $q$ square, that
	\[
        \satbound(\n,1) \leq 3\sqrt{q}\cdot\left(q^{\frac{\n}{2}-1}+q^{\frac{\n}{2}-2}+\dots+1\right)-\frac{\n}{2}\textnormal{.}
    \]
    Hence, in case $\rho=1$, we only achieve improvement if $q\in\{4,9\}$.
	
	\begin{rmk}[The case $\rho=\n-1$]\label{Rmk_ImprovementBy1}
	    Observe the following:
	    \begin{itemize}
	        \item[(A)] One can illustrate the use of the \emph{strong blocking set approach} by considering three non-concurrent lines of a Baer subplane, proving that $\satbound(2,1) \leq 3\sqrt{q}$ if $q$ is square.
	        \item[(B)] On the other hand, the \emph{mixed subgeometry approach} led to Result \ref{Res_DavydovMixedSubInPlane}, which states that $\satbound(2,1) \leq 3\sqrt{q}-1$ if $q$ is square, hence improving the bound of (A) by $1$.
	    \end{itemize}
	    Curiously, if $q=(q')^\n$, we discover the same phenomenon when observing a specific generalisation of the construction behind each of these two bounds.
	    On the one hand, Davydov and \"Osterg\aa rd \cite[Theorem $6$]{DavydovOstergard} generalised (A) by constructing the so-called \emph{tetrahedron}, obtained by connecting $\n+1$ points of $\pg(\n,q')$ in general position.
	    This gives rise to the expression
	    \begin{equation}\label{Eq_RhoIsNminus1Strong}
	        \satbound(\n,\n-1) \leq \frac{\n(\n+1)}{2}q'-\frac{\n(\n-1)}{2}+1\textnormal{.}
	    \end{equation}
	    On the other hand, the construction behind Theorem \ref{Thm_MainUpperBoundSubgeometric} (\emph{mixed subgeometry approach}) generalises (B) and, if $\n>1$, gives rise to
	    \begin{equation}\label{Eq_RhoIsNminus1Subgeom}
	        \satbound(\n,\n-1) \leq \frac{\n(\n+1)}{2}q'-\frac{\n(\n-1)}{2}\textnormal{,}
	    \end{equation}
	    which improves \eqref{Eq_RhoIsNminus1Strong} yet again by $1$.
	    
	    Although this phenomenon is somewhat curious, there are better upper bounds known for $\rho=\n-1$, $q=(q')^\n$, $\n\geq3$.
	    If $q'>\n+1$, Result \ref{Res_HiggledyPiggledy}($2.$) generally improves \eqref{Eq_RhoIsNminus1Subgeom}; moreover, \cite[Corollary $3.12$]{DavydovEtAl1} states that
	    \[
	        \satbound(\n,\n-1) \leq \frac{\n(\n+1)}{2}q'-\frac{\n(\n-1)}{2}-2q'+7\textnormal{,}
	    \]
	    which clearly improves \eqref{Eq_RhoIsNminus1Subgeom} if $q'\geq4$.
	    In conclusion, our results only improve the case $\rho=\n-1$ if $\n\geq3$ and $q'\in\{2,3\}$.
	\end{rmk}
	
	\section{The geometries $\boldsymbol{Y(\rho,m,q')}$ and $\boldsymbol{T^*(\AffineHyperSub_{\rho,m,q'})}$}\label{Sect_PointLine}
	
	In this section, we put the topic of saturating sets temporarily on hold.
	We will focus on an isomorphism between a point-line geometry $Y(\rho,m,q')$ (see Definition \ref{Def_PointLineY}) and the \emph{linear representation} $T^*(\AffineHyperSub_{\rho,m,q'})$ (see Definition \ref{Def_LinearRep}) of a $\rho$-dimensional $q'$-subgeometry $\AffineHyperSub_{\rho,m,q'}$ of $\pg\big(\rho+1,(q')^m\big)$ ($m\in\NNnot$).
	This will be done by taking advantage of a coordinate system of the respective ambient projective spaces (although an alternative exists using field reduction, see Subsection \ref{Subsec_IsomorphismXFieldRed}).
	
	\bigskip
	We have reason to believe that the point-line geometry $Y(\rho,m,q')$ hasn't been considered in the literature before.\footnote{With exception of the case $m=1$, as there exists a well-known isomorphism between the affine parts of $q'$-sublines of $\pg\big(1,(q')^{\rho+1}\big)$ through a fixed point (which can be viewed as $q'$-sublines of $\ag\big(1,(q')^{\rho+1}\big)$) and the lines of $\ag(\rho+1,q')$.}
	
	\subsection{Preliminaries}
	
	We introduce two point-line geometries.
	Pay attention to the fact that within the first point-line geometry, points of $\pg\big(m,(q')^{\rho+1}\big)$ are considered, while in the second point-line geometry we consider points of $\pg\big(\rho+1,(q')^m\big)$.
	
	\begin{df}\label{Def_PointLineY}
	    Let $m\in\NNnot$.
	    Consider an $(m-1)$-dimensional $q'$-subgeometry $\HyperSub$ of $\pg\big(m,(q')^{\rho+1}\big)$ and define $\Sigma_\HyperSub:=\vspanqr{\HyperSub}$.
	    The point-line geometry $Y(\rho,m,q')$ is the incidence structure $(\ppointsSub,\llinesSub)$ with natural incidence, where
	    \begin{itemize}
	        \item $\ppointsSub$ is the set of points of $\pg\big(m,(q')^{\rho+1}\big)\setminus\Sigma_\HyperSub$, and
	        \item $\llinesSub$ is the set of all point sets $\Sub\setminus\HyperSub$, where $\Sub$ is an $m$-dimensional $q'$-subgeometry of $\pg\big(m,(q')^{\rho+1}\big)$ that contains $\HyperSub$.
	    \end{itemize}
	    If $\pg\big(m,(q')^{\rho+1}\big)$ is embedded in $\pg\big(m',(q')^{\rho+1}\big)$ ($m'\geq m$) as an $m$-dimensional subspace $\Pi$, we will use notation $\ppointsSub^\Pi$ and $\llinesSub^\Pi$, respectively, to avoid confusion when considering more than one such point-line geometry.
	\end{df}
	
	Secondly, we brush up the concept of \emph{linear representations}.
	This notion was independently introduced for hyperovals by Ahrens and Szekeres \cite{AhrensSzekeres} and Hall \cite{Hall}, and extended to general point sets by De Clerck \cite{DeClerck}.
	
	\begin{df}\label{Def_LinearRep}
	    Let $m\in\NNnot$.
	    Consider a point set $\mathcal{K}$ of $\pg\big(\rho,(q')^m\big)$ embedded in $\pg\big(\rho+1,(q')^m\big)$.
	    The \emph{linear representation} of $\mathcal{K}$ is the point-line geometry $T^*(\mathcal{K}):=(\mathscr{P}_\mathcal{K},\mathscr{L}_\mathcal{K})$ with natural incidence, where
	    \begin{itemize}
	        \item $\mathscr{P}_\mathcal{K}$ is the set of points of $\pg\big(\rho+1,(q')^m\big)\setminus\pg\big(\rho,(q')^m\big)$, and
	        \item $\mathscr{L}_\mathcal{K}$ is the set of all point sets $\ell\setminus\{P\}$, where $\ell\nsubseteq\pg\big(\rho,(q')^m\big)$ is a line of $\pg\big(\rho+1,(q')^m\big)$ intersecting $\mathcal{K}$ in a point $P$.
	    \end{itemize}
	\end{df}
	
	\subsection{A direct isomorphism between $\boldsymbol{Y(\rho,m,q')}$ and $\boldsymbol{T^*(\AffineHyperSub_{\rho,m,q'})}$ using coordinates}\label{Subsec_IsomorphismYCoordinates}
	
	Consider the following configuration.
	
	\begin{config}\label{Conf_BasicPointLineGeometryConstruction}
	    Let $m\in\NNnot$ and suppose $q=(q')^{\rho+1}$. Consider an $(m-1)$-dimensional $q'$-subgeometry $\HyperSub$ of $\pg(m,q)$ and define $\Sigma_\HyperSub:=\vspanq{\HyperSub}$.
	    Choose a coordinate system for $\pg(m,q)$ such that
	    \begin{itemize}
	        \item $E_1,\dots,E_m$ are the points with coordinates $(0,1,0,\dots,0),\dots,(0,0,0,\dots,1)$, respectively,
	        \item $E'$ is the point with coordinates $(0,1,\dots,1)$, and
	        \item $\HyperSub$ is the (by Lemma \ref{Lm_UniqueSubThroughFrame} unique) $(m-1)$-dimensional $q'$-subgeometry containing the points $E_1,\dots,E_m$ and $E'$.
	    \end{itemize}
	    Furthermore, let $\AffineHyperSub_{\rho,m,q'}$ be a $\rho$-dimensional $q'$-subgeometry of $\pg\big(\rho+1,(q')^m\big)$ and define $\Sigma_{\AffineHyperSub_{\rho,m,q'}}:=\vspanqm{\AffineHyperSub_{\rho,m,q'}}$.
	    Choose a coordinate system for $\pg\big(\rho+1,(q')^m\big)$ such that
	    \begin{itemize}
	        \item $\AffineHyperSub_{\rho,m,q'}$ is the (by Lemma \ref{Lm_UniqueSubThroughFrame} unique) $\rho$-dimensional $q'$-subgeometry containing all points corresponding to the set of coordinates $\{(0,1,0,\dots,0),\dots,(0,0,0,\dots,1),(0,1,1,\dots,1)\}$.
	    \end{itemize}
	    For notational simplicity, we will often write $\AffineHyperSub$ instead of $\AffineHyperSub_{\rho,m,q'}$.
	\end{config}
	
	\begin{lm}\label{Lm_CoordinatesOfPointsOfSub}
	    Consider Configuration \ref{Conf_BasicPointLineGeometryConstruction}.
	    Let $P,Q\notin\Sigma_\HyperSub$ be two distinct points of $\pg(m,q)$ with coordinates $(1,x_1,x_2,\dots,x_m)$ and $(1,y_1,y_2,\dots,y_m)$ ($x_i,y_i\in\FF_q$), such that $PQ$ intersects $\Sigma_\HyperSub$ in $E'$.
		Let $\Sub$ be the (by Lemma \ref{Lm_UniqueSubThroughHyperSubAndLine} unique) $m$-dimensional $q'$-subgeometry containing $\HyperSub$, $P$ and $Q$.
		Then the set of coordinates of all points in $\Sub\setminus\HyperSub$ is equal to
		\[
		    \left\{\big(1,x_1+k_1(y_1-x_1),x_2+k_2(y_1-x_1),\dots,x_m+k_m(y_1-x_1)\big):k_1,\dots,k_m\in\FF_{q'}\right\}\textnormal{.}
		\]
	\end{lm}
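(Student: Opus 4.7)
The plan is to exhibit an explicit $q'$-subgeometry whose affine part (with respect to $\Sigma_\HyperSub$) matches the claimed coordinate set, and then invoke the uniqueness statement of Lemma \ref{Lm_UniqueSubThroughHyperSubAndLine} to conclude that this explicit subgeometry coincides with $\Sub$.

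First, I would rewrite the hypothesis on the intersection $PQ\cap\Sigma_\HyperSub$ in coordinates. A generic point of $PQ$ has coordinates $\alpha(1,x_1,\dots,x_m)+\beta(1,y_1,\dots,y_m)$, and it lies in the hyperplane $\Sigma_\HyperSub=\{X_0=0\}$ precisely when $\beta=-\alpha$; the resulting intersection point is $(0,y_1-x_1,\dots,y_m-x_m)$. Equating this (up to scalar) with $E'=(0,1,\dots,1)$ forces $d:=y_1-x_1=y_2-x_2=\cdots=y_m-x_m$ with $d\in\FF_q\setminus\{0\}$, and in particular lets me rewrite the target set as $\{(1,x_1+k_1d,\dots,x_m+k_md):k_i\in\FF_{q'}\}$.

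Next, I would construct the invertible $\FF_q$-linear map $\psi$ of $\pg(m,q)$ whose associated matrix has first column $(1,x_1,\dots,x_m)$ and whose remaining $m$ columns are the vectors $d\cdot e_1,\dots,d\cdot e_m$, where $e_i$ denotes the $i$-th standard basis vector. Because $\det\psi=d^m\neq0$, the map $\psi$ is a collineation of $\pg(m,q)$ and therefore sends the canonical $q'$-subgeometry (the set of projective points admitting a representative in $\FF_{q'}^{m+1}$) to another $m$-dimensional $q'$-subgeometry $\Sub'$. Splitting into the cases $a_0=0$ and $a_0\neq0$ for a representative $(a_0,a_1,\dots,a_m)\in\FF_{q'}^{m+1}\setminus\{0\}$, a short direct computation shows that $\Sub'\cap\Sigma_\HyperSub=\HyperSub$ and that $\Sub'\setminus\HyperSub$ is exactly the coordinate set in the statement.

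To finish, I would observe that $\Sub'$ contains $P=\psi\bigl((1,0,\dots,0)\bigr)$ and $Q=\psi\bigl((1,1,\dots,1)\bigr)$ together with the whole subgeometry $\HyperSub$, and hence also the $q'$-subline on the line $PQ$ meeting $\HyperSub$ in $E'$. Lemma \ref{Lm_UniqueSubThroughHyperSubAndLine} then forces $\Sub'=\Sub$, which yields the claim. No serious obstacle appears in this plan; the one step demanding attention is verifying that the image of the canonical subgeometry collapses exactly to the described affine point set together with $\HyperSub$, which reduces to routine coordinate bookkeeping exploiting the equalities $y_i-x_i=d$.
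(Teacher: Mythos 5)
Your argument is correct and follows essentially the same route as the paper's: both exhibit the same lower-triangular collineation fixing $\HyperSub$ pointwise and sending $(1,0,\dots,0)\mapsto P$ and $(1,1,\dots,1)\mapsto Q$, then read off the image of the affine part of the canonical $q'$-subgeometry. The only differences are presentational: you extract the constant $d=y_i-x_i$ up front rather than at the end, and you spell out the appeal to Lemma~\ref{Lm_UniqueSubThroughHyperSubAndLine} identifying the image with $\Sub$, a step the paper leaves implicit.
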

	\begin{proof}
	    It is clear that the hyperplane $\Sigma_\HyperSub$ is defined by the equation $X_0=0$.
	    Suppose that $E_0$ is the point of $\pg(m,q)$ with coordinates $(1,0,0,\dots,0)$ and $E$ the point with coordinates $(1,1,1,\dots,1)$, and let $\Sub_0$ be the (by Lemma \ref{Lm_UniqueSubThroughFrame} unique) $m$-dimensional $q'$-subgeometry containing the frame $\{E_0,\dots,E_m,E\}$. As this is the canonical frame,
	    it is clear that the set of coordinates of all points in $\Sub_0\setminus\Sigma_\HyperSub$ is equal to
	    \[
		    \left\{(1,k_1,k_2,\dots,k_m):k_1,\dots,k_m\in\FF_{q'}\right\}\textnormal{.}
		\]
	    One can find an element of $\textnormal{PGL}(m+1,q)$ that maps the canonical frame $\{E_0,E_1,\dots,E_m,E\}$ onto the frame $\{P,E_1,\dots,E_m,Q\}$, which can be represented by an $\FF_q$-multiple of the following matrix:
		\[
		    \begin{pmatrix}
                1 & 0 & 0 & \cdots & 0\\
                x_1 & y_1-x_1 & 0 & \cdots & 0\\
                x_2 & 0 & y_2-x_2 & \cdots & 0\\
                \vdots & \vdots & \vdots & \ddots & \vdots\\
                x_m & 0 & 0 & \cdots & y_m-x_m
            \end{pmatrix}\textnormal{.}
		\]
		Such a matrix maps a point of $\Sub_0$ with coordinates $(1,k_1,k_2,\dots,k_m)$, $k_i\in\FF_{q'}$, onto a point of $\Sub$ with coordinates
		\[
		    \big(1,x_1+k_1(y_1-x_1),x_2+k_2(y_2-x_2),\dots,x_m+k_m(y_m-x_m)\big)\textnormal{.}
		\]
		Note that, as $E'\in PQ$, the tuple $(0,y_1-x_1,y_2-x_2,\dots,y_m-x_m)$ has to be an $\FF_q$-multiple of $(0,1,1,\dots,1)$, which implies that $y_i-x_i=y_j-x_j$ for all $i,j\in\{1,2,\dots,m\}$.
		Hence, the set of coordinates of all points in $\Sub\setminus\HyperSub$ can be simplified to
		\[
		    \left\{\big(1,x_1+k_1(y_1-x_1),x_2+k_2(y_1-x_1),\dots,x_m+k_m(y_1-x_1)\big):k_1,\dots,k_m\in\FF_{q'}\right\}\textnormal{.}\qedhere
		\]
	\end{proof}
	
	We now introduce the following map $\varphi$.
	
	\begin{df}\label{Def_AffineIsomorphism}
	    Consider Configuration \ref{Conf_BasicPointLineGeometryConstruction}.
	    Choose elements $\alpha\in\FF_q$ and $\beta\in\FF_{(q')^m}$ such that $\FF_{q'}[\alpha]\cong\FF_q$ and $\FF_{q'}[\beta]\cong\FF_{(q')^m}$.
		Define the map $\varphi:\ppointsSub\rightarrow\ppointsAff$ that maps a point of $\ppointsSub$ with coordinates
		\[
		    (1,z_1,z_2,\dots,z_m) = \left(1,\sum_{j=0}^\rho z_{1j}\alpha^j,\sum_{j=0}^\rho z_{2j}\alpha^j,\dots,\sum_{j=0}^\rho z_{mj}\alpha^j\right) \qquad (z_k\in\FF_q,z_{rs}\in\FF_{q'})
		\]
		onto the unique point of $\ppointsAff$ with coordinates
		\[
		    \left(1,\sum_{i=1}^m z_{i0}\beta^{i-1},\sum_{i=1}^m z_{i1}\beta^{i-1},\dots,\sum_{i=1}^m z_{i\rho}\beta^{i-1}\right)\textnormal{.}
		\]
		If $\pg(m,q)$ is embedded in a larger projective geometry as a subspace $\Pi$, we will use the notation $\restr{\varphi}{\Pi}$ to clarify which map is considered.
	\end{df}
	
	\begin{thm}\label{Thm_SubgeometriesAreAffineLines}
		Let $m\in\NNnot$.
		Then the point-line geometries $Y(\rho,m,q')$ and $T^*(\AffineHyperSub_{\rho,m,q'})$ are isomorphic.
	\end{thm}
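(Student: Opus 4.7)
The natural approach is to verify directly that the coordinate map $\varphi$ of Definition~\ref{Def_AffineIsomorphism} is the required isomorphism. Two things have to be checked: (i) $\varphi$ is a bijection between the point sets $\ppointsSub$ and $\ppointsAff$; and (ii) $\varphi$ sends lines of $Y(\rho,m,q')$ onto lines of $T^*(\AffineHyperSub_{\rho,m,q'})$. Part (i) is essentially a basis/counting argument; the bulk of the proof sits in (ii).

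First I would dispose of (i). Each affine point admits a unique normalised representation $(1,z_1,\dots,z_m)$, and $\varphi$ simply re-reads the $\FF_{q'}$-coefficient matrix $(z_{ij})_{i,j}$: in the source, rows $i$ are assembled via the basis $\{1,\alpha,\dots,\alpha^\rho\}$ of $\FF_q$ over $\FF_{q'}$; in the target, columns $j$ are assembled via the basis $\{1,\beta,\dots,\beta^{m-1}\}$ of $\FF_{(q')^m}$ over $\FF_{q'}$. Both assemblies are $\FF_{q'}$-linear bijections, and $|\ppointsSub|=|\ppointsAff|=(q')^{m(\rho+1)}$, so $\varphi$ is an $\FF_{q'}$-linear bijection of affine parts.

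For (ii), I would take an arbitrary line $\Sub\setminus\HyperSub\in\llinesSub$ through a point $P=(1,x_1,\dots,x_m)$ and apply Lemma~\ref{Lm_CoordinatesOfPointsOfSub} to parametrise its points as $(1,x_1+k_1 t,\dots,x_m+k_m t)$ with $k_1,\dots,k_m\in\FF_{q'}$ and a common factor $t\in\FF_q^*$ forced by the line through $E'$. Expanding $x_i=\sum_j x_{ij}\alpha^j$ and $t=\sum_j t_j\alpha^j$ ($x_{ij},t_j\in\FF_{q'}$), inserting into the definition of $\varphi$, and swapping the roles of the two indices, the image should collapse to
\[
    \varphi(P)+S\cdot(0,t_0,t_1,\dots,t_\rho),\qquad S:=\sum_{i=1}^m k_i\beta^{i-1}\in\FF_{(q')^m}.
\]
As $(k_1,\dots,k_m)$ ranges over $\FF_{q'}^m$, the scalar $S$ ranges over all of $\FF_{(q')^m}$, so the image is exactly the affine part of the projective line of $\pg(\rho+1,(q')^m)$ through $\varphi(P)$ with point at infinity $(0,t_0,\dots,t_\rho)$. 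Because each $t_j$ lies in $\FF_{q'}$, that infinity point lies on the canonical $q'$-subgeometry $\AffineHyperSub_{\rho,m,q'}$, so the image is a genuine line of $T^*(\AffineHyperSub_{\rho,m,q'})$. Surjectivity onto lines of $T^*$ is then read off the same identity in reverse: given a direction $(0,t_0,\dots,t_\rho)\in\AffineHyperSub_{\rho,m,q'}$ and an affine point $\varphi(P)$, put $t:=\sum_j t_j\alpha^j\in\FF_q^*$ and apply Lemma~\ref{Lm_CoordinatesOfPointsOfSub} to $P$ and $Q:=\varphi^{-1}\big(\varphi(P)+(0,t_0,\dots,t_\rho)\big)$.

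The main obstacle is the one computation displayed above: one must verify that after expanding in $\alpha$ and reassembling in $\beta$, all terms involving $k_1,\dots,k_m$ combine into a single scalar $S\in\FF_{(q')^m}$ multiplying a fixed direction vector whose coordinates all lie in $\FF_{q'}$. Independence of the direction vector from the $k_i$ is exactly what guarantees that the image is a line rather than something larger, and the $\FF_{q'}$-rationality of the direction is precisely what pins the point at infinity onto $\AffineHyperSub_{\rho,m,q'}$. Once this identity is recorded, both halves of (ii) follow immediately, and the theorem is proved.
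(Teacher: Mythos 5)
Your proof is essentially the same as the paper's: both verify that the coordinate map $\varphi$ of Definition~\ref{Def_AffineIsomorphism} is a bijection on affine points, both invoke Lemma~\ref{Lm_CoordinatesOfPointsOfSub} to parametrise a line $\Sub\setminus\HyperSub$ via two of its points $P,Q$ with $PQ$ through $E'$, and both arrive at the same key identity, namely that the image of $\Sub\setminus\HyperSub$ under $\varphi$ is $\left\{\varphi(P)+S\cdot(0,t_0,\dots,t_\rho):S\in\FF_{(q')^m}\right\}$ with $S=\sum_i k_i\beta^{i-1}$ and $t_j=y_{1j}-x_{1j}\in\FF_{q'}$. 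This is precisely the paper's computation \eqref{Eq_Coordinates}.

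There is, however, a concrete slip in your surjectivity step. If $P$ has coordinates $(1,x_1,\dots,x_m)$, then $Q:=\varphi^{-1}\big(\varphi(P)+(0,t_0,\dots,t_\rho)\big)$ has coordinates $(1,x_1+t,x_2,\dots,x_m)$, since adding $(0,t_0,\dots,t_\rho)$ only changes the $\beta^0$-coefficient of each target coordinate, i.e.\ only the $i=1$ row of the coefficient matrix. Thus $PQ$ meets $\Sigma_\HyperSub$ in $E_1=(0,1,0,\dots,0)$, \emph{not} in $E'=(0,1,1,\dots,1)$, and Lemma~\ref{Lm_CoordinatesOfPointsOfSub} does not apply to this pair. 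The correct choice is $Q$ with coordinates $(1,x_1+t,x_2+t,\dots,x_m+t)$, that is, $Q=\varphi^{-1}\big(\varphi(P)+(1+\beta+\dots+\beta^{m-1})\cdot(0,t_0,\dots,t_\rho)\big)$, or, put plainly, the point on the line $PE'$ obtained by translating by $t$. This is exactly what the paper does by leaving $\overline{y}_1\in\FF_q$ free and forcing $\overline{y}_i-\overline{x}_i=\overline{y}_1-\overline{x}_1$ for all $i$, without writing an explicit $\varphi^{-1}$-formula. Once this $Q$ is corrected, the rest of your argument goes through and coincides with the paper's.
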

	\begin{proof}
	    Let $q=(q')^{\rho+1}$.
	    Note that the choice of coordinates made in Configuration \ref{Conf_BasicPointLineGeometryConstruction} does not affect the generality of the theorem.
	    After all, any collineation of $\pg(m,q)$ preserves elements of $\llinesSub$ as being (the affine parts of) $m$-dimensional $q'$-subgeometries containing the image of $\HyperSub$, hence the whole set $\llinesSub$ is preserved and, furthermore, incidence is sustained.
	    The same holds for the point-line geometry $T^*(\AffineHyperSub_{\rho,m,q'})$.
	    
		Thus, consider Configuration \ref{Conf_BasicPointLineGeometryConstruction}.
		Let $\Sub\setminus\HyperSub$ be an arbitrary element of $\llinesSub$.
		Suppose that $P,Q\in\Sub\setminus\HyperSub$ are two distinct points with coordinates $(1,x_1,x_2,\dots,x_m)$ and $(1,y_1,y_2,\dots,y_m)$ ($x_i,y_i\in\FF_q$), such that $PQ$ intersects $\Sigma_\HyperSub$ in $E'$.
		By Lemma \ref{Lm_UniqueSubThroughHyperSubAndLine}, $\Sub$ is uniquely defined by $\HyperSub$, $P$ and $Q$.
		By Lemma \ref{Lm_CoordinatesOfPointsOfSub}, the set of coordinates of all points in $\Sub\setminus\HyperSub$ is equal to
		\[
		    \left\{\big(1,x_1+k_1(y_1-x_1),x_2+k_2(y_1-x_1),\dots,x_m+k_m(y_1-x_1)\big):k_1,\dots,k_m\in\FF_{q'}\right\}\textnormal{.}
		\]
		Consider the map $\varphi$ (see Definition \ref{Def_AffineIsomorphism}); note that $\varphi$ is a bijection, as one can easily define its inverse.
		We will prove that $\varphi$ induces an isomorphism between the point-line geometries $Y(\rho,m,q')$ and $T^*(\AffineHyperSub_{\rho,m,q'})$.
		
		If $x_i=\sum_{j=0}^\rho x_{ij}\alpha^j$ and $y_i=\sum_{j=0}^\rho y_{ij}\alpha^j$ ($x_{ij},y_{ij}\in\FF_{q'}$), then the set of coordinates of the images of all points in $\Sub\setminus\HyperSub$ under $\varphi$ is equal to
		\begin{align}
		    &\left\{\left(1,\sum_{i=1}^m\big(x_{i0}+k_i(y_{10}-x_{10})\big)\beta^{i-1},\dots,\sum_{i=1}^m\big(x_{i\rho}+k_i(y_{1\rho}-x_{1\rho})\big)\beta^{i-1}\right):k_1,\dots,k_m\in\FF_{q'}\right\}\nonumber\\
		    =&\Bigg\{\left(1,\sum_{i=1}^mx_{i0}\beta^{i-1},\dots,\sum_{i=1}^mx_{i\rho}\beta^{i-1}\right)\nonumber\\
		    &\qquad\qquad\qquad+\sum_{i=1}^mk_i\beta^{i-1}\cdot\left(0,y_{10}-x_{10},\dots,y_{1\rho}-x_{1\rho}\right):k_1,\dots,k_m\in\FF_{q'}\Bigg\}\nonumber\\
		    =&\left\{\left(1,\sum_{i=1}^mx_{i0}\beta^{i-1},\dots,\sum_{i=1}^mx_{i\rho}\beta^{i-1}\right)+k\cdot\left(0,y_{10}-x_{10},\dots,y_{1\rho}-x_{1\rho}\right):k\in\FF_{(q')^m}\right\}\textnormal{.}\label{Eq_Coordinates}
		\end{align}
		The latter set is equal to the set of coordinates of all points on $l\setminus\Sigma_{\AffineHyperSub}$, with $l$ a line of $\pg\big(\rho+1,(q')^m\big)$ through $\varphi(P)\notin\Sigma_\AffineHyperSub$ intersecting $\Sigma_\AffineHyperSub$ in the point of $\AffineHyperSub$ with coordinates $(0,x_{10}-y_{10},\dots,x_{1\rho}-y_{1\rho})\subseteq\FF_{q'}^{\rho+2}$.
		Hence, as it is clear that $\varphi$ maps points on a line of $Y(\rho,m,q')$ onto points on a line of $T^*(\AffineHyperSub)$, this map naturally induces a morphism from $Y(\rho,m,q')$ to $T^*(\AffineHyperSub)$.
		
		As $\varphi$ is a bijection between $\ppointsSub$ and $\ppointsAff$, this map is injective w.r.t.\ the line sets $\llinesSub$ and $\llinesAff$, hence the only thing left to prove is the fact that $\varphi$ is surjective w.r.t.\ these line sets.
		
		It is clear that an element $l\setminus\{D'\}$ of $\llinesAff$ is uniquely defined by the point $D'\in\AffineHyperSub$ and a point $P'\in l\setminus\{D'\}$.
		By observing \eqref{Eq_Coordinates}, it is clear that we can fix the point $\overline{P}:=\varphi^{-1}(P')\in\pg(m,q)\setminus\Sigma_\HyperSub$ which has, let us say, coordinates $(1,\overline{x}_1,\dots,\overline{x}_m)\in\FF_q^{m+1}$, and try to choose a point $\overline{Q}\in\pg(m,q)\setminus\Sigma_\HyperSub$ such that
		\begin{enumerate}
		    \item $\overline{P}\overline{Q}$ intersects $\Sigma_\HyperSub$ in a point of $\HyperSub$, and
		    \item the element of $\llinesSub$ defined by $\HyperSub$, $\overline{P}$ and $\overline{Q}$ is mapped by $\varphi$ onto a line intersecting $\Sigma_\AffineHyperSub$ in $D'\in\AffineHyperSub$.
		\end{enumerate}
		Condition $1$ is clearly fulfilled if we choose a point $\overline{Q}\in\pg(m,q)$ with coordinates $(1,\overline{y}_1,\dots,\overline{y}_m)$ such that $\overline{y}_1-\overline{x}_1=\overline{y}_2-\overline{x}_2=\dots=\overline{y}_m-\overline{x}_m$.
		These equations mean that, once we fix the value $\overline{y}_1\in\FF_q$, we fix the entire point $\overline{Q}$.
		This implies that we have freedom to choose any value $\overline{y}_1=\sum_{j=0}^\rho \overline{y}_{1j}\alpha^j\in\FF_q$ to try and satisfy condition $2$.
		By observing \eqref{Eq_Coordinates}, it is clear that this freedom of choice implies that we can reach each tuple of coordinates in $\{0\}\times\FF_{q'}^{\rho+1}$ corresponding to a point of $\AffineHyperSub$, in particular the coordinates of $D'\in\AffineHyperSub$.
	\end{proof}
	
	\subsection{Notions of parallelism and independence in $\boldsymbol{Y(\rho,m,q')}$}\label{Subsec_ParallelismIndependence}
	
	Theorem \ref{Thm_SubgeometriesAreAffineLines} states that the point-line geometry $Y(\rho,m,q')$ is isomorphic to the linear representation $T^*(\AffineHyperSub_{\rho,m,q'})$ of a $\rho$-dimensional $q'$-subgeometry $\AffineHyperSub_{\rho,m,q'}$ of $\pg\big(\rho,(q')^m\big)$ (see Definition \ref{Def_PointLineY} and Definition \ref{Def_LinearRep}).
	As the lines of a linear representation are embedded in an affine geometry, notions of \emph{parallelism} and \emph{independence of concurrent lines} seem transferable to the point-line geometry $Y(\rho,m,q')$.
	
	\begin{df}\label{Def_Notions}
	    Let $m\in\NNnot$ and consider the point-line geometry $Y(\rho,m,q')$.
	    Then
	    \begin{itemize}
	        \item distinct lines of $\llinesSub$ are called \emph{concurrent} if they contain a common point.
	    \end{itemize}
	    Now define $\widetilde{\varphi}:=\varphi\circ\varphi_Y$, where $\varphi_Y$ is a collineation of $\pg(m,q)$ such that the image of the point-line geometry $Y(\rho,m,q')$ under $\varphi_Y$ corresponds to the coordinate system described in Configuration \ref{Conf_BasicPointLineGeometryConstruction}, and $\varphi$ is the isomorphism described in Definition \ref{Def_AffineIsomorphism}.
	    Then
	    \begin{itemize}
	        \item two lines of $\llinesSub$ are said to be $\widetilde{\varphi}$\emph{-parallel} if their images under $\widetilde{\varphi}$ are parallel,
	        \item concurrent lines of $\llinesSub$ are said to be $\widetilde{\varphi}$\emph{-independent} if their images under $\widetilde{\varphi}$ are independent\footnote{Concurrent lines $\ell_1,\ell_2,\dots,\ell_d$ of a projective space are called \emph{independent} if they span a subspace of dimension $d$.},
	        \item we will call (the point set of) any line of $\llinesSub$ a $1$\emph{-dimensional affine} $\widetilde{\varphi}$\emph{-subspace}.
	        Recursively, for any $d\in\{2,\dots,\rho+1\}$, a union of $(q')^{(d-1)m}$ $\widetilde{\varphi}$-parallel lines of $\llinesSub$ is said to be a $d$\emph{-dimensional affine} $\widetilde{\varphi}$\emph{-subspace} if each of these lines intersects a fixed $(d-1)$-dimensional affine $\widetilde{\varphi}$-subspace in precisely one point.
	    \end{itemize}
	\end{df}
	
	\begin{lm}\label{Lm_Invariance}
	    The notions described in Definition \ref{Def_Notions} are well-defined.
	\end{lm}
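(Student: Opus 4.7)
The plan is to verify that each of the three notions---$\widetilde{\varphi}$-parallelism, $\widetilde{\varphi}$-independence, and $\widetilde{\varphi}$-affine subspace---is invariant under any change of the auxiliary data $(\varphi_Y, \alpha, \beta)$ appearing in $\widetilde{\varphi} = \varphi \circ \varphi_Y$. Concretely, for any two valid triples of choices $(\varphi_Y, \alpha, \beta)$ and $(\varphi_Y', \alpha', \beta')$, I would check that the induced bijection $\psi := \widetilde{\varphi}' \circ \widetilde{\varphi}^{-1}$ of $\ppointsAff$ preserves lines of $\llinesAff$, their parallelism classes, independence of concurrent lines, and $d$-dimensional affine subspaces of $\pg\big(\rho+1,(q')^m\big) \setminus \Sigma_{\AffineHyperSub}$ of the required form. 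Since the three changes commute, I would treat them separately.

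A change $\alpha \to \alpha'$ amounts to an $\FF_{q'}$-change of basis of $\FF_q$, encoded by some $A \in \textnormal{GL}(\rho+1, \FF_{q'})$; tracing this through Definition \ref{Def_AffineIsomorphism} shows that $\psi$ acts on the last $\rho+1$ affine coordinates of $\pg\big(\rho+1, (q')^m\big)$ via $A$. Because $A$ has entries in $\FF_{q'} \subseteq \FF_{(q')^m}$, this is an $\FF_{(q')^m}$-linear map---a projective collineation stabilising $\AffineHyperSub$---and collineations preserve all three notions. A change $\beta \to \beta'$ gives an $\FF_{q'}$-linear bijection $T$ of $\FF_{(q')^m}$ fixing $\FF_{q'}$ pointwise (as $\beta^0 = (\beta')^0 = 1$), applied coordinate-wise. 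This is not in general an $\FF_{(q')^m}$-collineation, so a direct computation is needed: every line of $\llinesAff$ has the form $\{P + kD : k \in \FF_{(q')^m}\}$ with direction $D \in \FF_{q'}^{\rho+2}$ (Theorem \ref{Thm_SubgeometriesAreAffineLines}), and $\FF_{q'}$-linearity of $T$ gives $T(kD) = T(k)D$, so $\psi$ sends this line to $\{T(P) + k'D : k' \in \FF_{(q')^m}\}$, another line of $\llinesAff$ through the same (pointwise-fixed) point of $\AffineHyperSub$. The same manipulation, extended to parameterisations $\{P + \sum_s k_s D_s\}$ with $D_s \in \FF_{q'}^{\rho+2}$, then handles parallelism, independence, and higher-dimensional affine subspaces.

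Finally, a change of $\varphi_Y$ contributes a collineation $\sigma := \varphi_Y' \circ \varphi_Y^{-1}$ of $\pg(m, q)$ stabilising the canonical configuration of Configuration \ref{Conf_BasicPointLineGeometryConstruction}, in particular fixing $\HyperSub$ together with its canonical frame $\{E_1, \dots, E_m, E'\}$. This will be the main obstacle, since $\varphi$ itself is not a collineation of anything and so $\sigma$ does not transfer across $\varphi$ formally. The strategy is to decompose $\sigma$ with respect to the subfield extension $\FF_{q'} \subseteq \FF_q$: a collineation stabilising the canonical frame setwise splits as a coordinate permutation, an $\FF_{q'}$-scalar rescaling of each coordinate, and possibly a Frobenius of $\FF_q / \FF_{q'}$. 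Pushing each of these pieces through $\varphi$ yields either a genuine projective collineation of $\pg\big(\rho+1, (q')^m\big)$ stabilising $\AffineHyperSub$, or an $\FF_{q'}$-semi-linear map of exactly the form already handled in the change-of-$\beta$ step---and in both situations the affine structure in the target is preserved, completing the verification.
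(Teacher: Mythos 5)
Your treatment of the change of $\alpha$ and the change of $\beta$ is careful and correct — the observation that a change of $\beta$ is only $\FF_{q'}$-linear (not $\FF_{(q')^m}$-semilinear), and that one must therefore compute directly using the fact that every line of $\llinesAff$ has a direction vector in $\FF_{q'}^{\rho+2}$, is exactly the right point. However, there is a genuine gap in your treatment of the change of $\varphi_Y$, which is the heart of the lemma.

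You assert that $\sigma := \varphi_Y' \circ \varphi_Y^{-1}$ ``in particular fixes $\HyperSub$ together with its canonical frame $\{E_1, \dots, E_m, E'\}$.'' This is not what Definition \ref{Def_Notions} forces. The requirement on $\varphi_Y$ is only that the image of $Y(\rho,m,q')$ under $\varphi_Y$ corresponds to Configuration \ref{Conf_BasicPointLineGeometryConstruction}, which amounts to $\varphi_Y(\HyperSub)$ being the canonical subgeometry; hence $\sigma$ is an arbitrary collineation of $\pg(m,q)$ stabilising $\HyperSub$ \emph{setwise}. That stabiliser is much larger than the setwise frame stabiliser: it contains, for instance, all elements $\begin{pmatrix}1 & 0 \\ 0 & B\end{pmatrix}$ with $B \in \textnormal{GL}(m,q')$, which move the frame points around within $\HyperSub$ freely, as well as elements acting nontrivially off $\Sigma_\HyperSub$. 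Your claimed decomposition (coordinate permutation, $\FF_{q'}$-scalar rescaling, Frobenius of $\FF_q/\FF_{q'}$) does not generate this stabiliser, so the case analysis does not exhaust the possibilities for $\sigma$. Moreover, the final sentence — that ``pushing each of these pieces through $\varphi$ yields either a genuine projective collineation \ldots or an $\FF_{q'}$-semi-linear map of exactly the form already handled'' — is asserted rather than verified; since $\varphi$ is not a collineation of anything, this transfer is precisely the nontrivial content of the lemma, and it is not obvious that it holds for all generators of the stabiliser.

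The paper avoids this group-theoretic decomposition entirely. Instead, it derives from \eqref{Eq_Coordinates} an intrinsic characterisation of $\varphi$-parallelism as property \eqref{Eq_Parallelism}: two lines $\Sub\setminus\HyperSub$, $\overline{\Sub}\setminus\HyperSub$ are $\varphi$-parallel iff one can find points $P,Q,\overline{P},\overline{Q}$ on lines through $E'$ whose coordinate differences are $\FF_{q'}$-proportional. This characterisation mentions neither $\alpha$ nor $\beta$, so it disposes of those choices automatically, and it is stated purely in terms of a ratio lying in the subfield $\FF_{q'}$, a condition the paper argues is preserved by any collineation of $\pg(m,q)$ (since any automorphism of $\FF_q$ stabilises $\FF_{q'}$). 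Higher-dimensional affine $\widetilde{\varphi}$-subspaces and $\widetilde{\varphi}$-independence are then handled inductively from parallelism. If you wish to salvage the decomposition route, you would have to correctly describe the full setwise stabiliser of $\HyperSub$ in $\textnormal{P}\Gamma\textnormal{L}(m+1,q)$ and check each of its generators explicitly against $\varphi$, rather than the smaller frame stabiliser; or, more efficiently, extract a coordinate-free criterion for parallelism as the paper does.
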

	\begin{proof}
	    Assume that $Y(\rho,m,q')$ is chosen in such a way that it corresponds to the coordinate system described in Configuration \ref{Conf_BasicPointLineGeometryConstruction}.
	    For this lemma to be true, we want to prove that if one of the last three notions described in Definition \ref{Def_Notions} holds for certain points or lines of $Y(\rho,m,q')$, it still holds for the images of these points or lines w.r.t.\ any collineation of the ambient geometry $\pg(m,q)$.
	    
	    Let $\Sub\setminus\HyperSub$ be an arbitrary element of $\llinesSub$ and $P,Q\in\Sub\setminus\HyperSub$ be two distinct points with coordinates $(1,x_1,x_2,\dots,x_m)$ and $(1,y_1,y_2,\dots,y_m)$, respectively ($x_i,y_i\in\FF_q$), such that $PQ$ intersects $\Sigma_\HyperSub$ in $E'$ (see Configuration \ref{Conf_BasicPointLineGeometryConstruction}), implying that
		\begin{equation}\label{Eq_FirstCoord}
		    y_1-x_1=\dots=y_m-x_m\textnormal{.}
		\end{equation}
		Assuming that $x_i=\sum_{j=0}^\rho x_{ij}\alpha^j$ and $y_i=\sum_{j=0}^\rho y_{ij}\alpha^j$ ($\FF_{q'}[\alpha]\cong\FF_q$ and $x_{ij},y_{ij}\in\FF_{q'}$, see Definition \ref{Def_AffineIsomorphism}), just as in \eqref{Eq_Coordinates}, the set of coordinates of the images of all points in $\Sub\setminus\HyperSub$ under $\varphi$ is equal to
		\[
		    \left\{\left(\textnormal{coordinates of }\varphi(P)\right)+k\cdot\left(0,y_{10}-x_{10},\dots,y_{1\rho}-x_{1\rho}\right):k\in\FF_{(q')^m}\right\}\textnormal{.}
		\]
		For any element $\overline{\Sub}\setminus\HyperSub$ of $\llinesSub$, we can analogously choose two distinct points $\overline{P},\overline{Q}\in\overline{\Sub}\setminus\HyperSub$ with coordinates $(1,\overline{x_1},\overline{x_2},\dots,\overline{x_m})$ and $(1,\overline{y_1},\overline{y_2},\dots,\overline{y_m})$, respectively ($\overline{x_i},\overline{y_i}\in\FF_q$), such that $\overline{P}\overline{Q}$ intersects $\Sigma_\HyperSub$ in $E'$, implying that
		\begin{equation}\label{Eq_SecondCoord}
		    \overline{y_1}-\overline{x_1}=\dots=\overline{y_m}-\overline{x_m}\textnormal{.}
		\end{equation}
		The set of coordinates of the images of all points in $\overline{\Sub}\setminus\HyperSub$ under $\varphi$ is equal to
		\[
		    \left\{\left(\textnormal{coordinates of }\varphi(\overline{P})\right)+k\cdot\left(0,\overline{y_{10}}-\overline{x_{10}},\dots,\overline{y_{1\rho}}-\overline{x_{1\rho}}\right):k\in\FF_{(q')^m}\right\}\textnormal{.}
		\]
		Hence, $\Sub\setminus\HyperSub$ and $\overline{\Sub}\setminus\HyperSub$ are $\varphi$-parallel if and only if there exists an element $\gamma\in\FF_{(q')^m}$ such that $(0,y_{10}-x_{10},\dots,y_{1\rho}-x_{1\rho})=\gamma\cdot(0,\overline{y_{10}}-\overline{x_{10}},\dots,\overline{y_{1\rho}}-\overline{x_{1\rho}})$.
		Moreover, as both of these coordinates are tuples of $\FF_{q'}^{\rho+2}$, $\gamma$ has to be an element of $\FF_{q'}$.
		Combining this with \eqref{Eq_FirstCoord} and \eqref{Eq_SecondCoord}, we get that $y_{ij}-x_{ij}=\gamma(\overline{y_{ij}}-\overline{x_{ij}})$ for a $\gamma\in\FF_{q'}$ independent of $i$ or $j$, implying that
		\begin{equation}\label{Eq_Parallelism}
		    (y_1-x_1,\dots,y_m-x_m)=\gamma\cdot(\overline{y_1}-\overline{x_1},\dots,\overline{y_m}-\overline{x_m})\quad\textnormal{for a }\gamma\in\FF_{q'}\textnormal{.}
		\end{equation}
		In conclusion, two elements $\Sub\setminus\HyperSub$ and $\overline{\Sub}\setminus\HyperSub$ are $\varphi$-parallel if and only if there exist two points $P$ and $Q$ in $\Sub\setminus\HyperSub$ on a line through $E'$ and two points $\overline{P}$ and $\overline{Q}$ in $\overline{\Sub}\setminus\HyperSub$ on a line through $E'$ such that property \eqref{Eq_Parallelism} holds for their respective coordinates.
		As $\FF_{q'}$ is fixed under any automorphism of the ambient field $\FF_q$, we can observe that property \eqref{Eq_Parallelism} stays valid if $P$, $Q$, $\overline{P}$ and $\overline{Q}$ are moved by any collineation of $\pg(m,q)$.
		This implies that $\varphi$-parallelism is invariant w.r.t.\ collineations, hence this notion is well-defined.
		
		Using this, we can prove the same for the notion of a $d$-dimensional affine $\widetilde{\varphi}$-subspace ($d\in\{1,\dots,\rho+1\}$).
		If $d=1$, this is trivially true.
		Note that any such a $d$-dimensional affine $\widetilde{\varphi}$-subspace is a set of $(q')^{dm}$ points of $\ppointsSub$.
		If $d\geq2$, such a $d$-dimensional affine $\widetilde{\varphi}$-subspace occurs as a union of $\widetilde{\varphi}$-parallel lines through each of the points of a $(d-1)$-dimensional affine $\widetilde{\varphi}$-subspace.
		As any collineation of $\pg(m,q)$ preserves incidence, $\widetilde{\varphi}$-parallelism and, inductively, $(d-1)$-dimensional affine $\widetilde{\varphi}$-subspaces, the proof follows.
		
		Finally, as the points of $d$ concurrent, $\widetilde{\varphi}$-independent lines of $\llinesSub$ are contained in a unique $d$-dimensional affine $\widetilde{\varphi}$-subspace, the invariance of the latter affine $\widetilde{\varphi}$-subspace implies the invariance of the $\widetilde{\varphi}$-independence of those lines.
	\end{proof}
	
	Consider the following configuration.
	
	\begin{config}\label{Conf_BasicThreeSubspaceConstruction}
		Let $m\in\NNnot$ and suppose $q=(q')^{\rho+1}$.
		Consider an $(m-2)$-dimensional $q'$-subgeometry $\HyperSub$ of $\pg(m,q)$ and define $\Sigma_\HyperSub:=\vspanq{\HyperSub}$.
		Let $\Pi_1$, $\Pi_2$ and $\Pi_3$ be three distinct hyperplanes of $\pg(m,q)$ through $\Sigma_\HyperSub$.
	\end{config}
	
	\begin{lm}\label{Lm_UniqueBigSubOverConstruction}
	    Consider Configuration \ref{Conf_BasicThreeSubspaceConstruction}.
	    Let $\Sub\supseteq\HyperSub$ be an $(m-1)$-dimensional $q'$-subgeometry of $\pg(m,q)$ such that $\vspanq{\Sub}=\Pi_1$ and consider a point $S\in\Pi_2\setminus\Sigma_\HyperSub$.
	    Then there exists a unique $m$-dimensional $q'$-subgeometry $\BigSub$ containing $\Sub$ and $S$ and intersecting $\Pi_3\setminus\Sigma_\HyperSub$.
	\end{lm}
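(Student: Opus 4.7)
The plan is to adapt the coordinate construction of Configuration~\ref{Conf_BasicPointLineGeometryConstruction} to this three-hyperplane setting, reducing the lemma to a short calculation on a single line through $S$. Using the $3$-transitivity of the projective group on the pencil of hyperplanes through $\Sigma_\HyperSub$, together with Lemma~\ref{Lm_UniqueSubThroughFrame} to put $\HyperSub$ and $\Sub$ in canonical form, I may choose coordinates of $\pg(m,q)$ such that $\Pi_1:X_0=0$, $\Pi_2:X_1=0$, $\Pi_3:X_0=X_1$, such that $\HyperSub$ is the canonical $q'$-subgeometry of $\Sigma_\HyperSub=\{X_0=X_1=0\}$, and such that $\Sub$ is the canonical $q'$-subgeometry of $\Pi_1$ containing $\HyperSub$ and the point $P_0:=(0,1,0,\dots,0)$. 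Then $S=(1,0,s_2,\dots,s_m)$ for some $s_i\in\FF_q$, and since $\langle P_0,\Sigma_\HyperSub\rangle=\Pi_1$ with $S\notin\Pi_1$, we have $\langle P_0,S,\Sigma_\HyperSub\rangle=\pg(m,q)$; consequently the line $P_0 S$ is disjoint from $\Sigma_\HyperSub$ and meets $\Pi_3$ in the single point $T:=(1,1,s_2,\dots,s_m)\in\Pi_3\setminus\Sigma_\HyperSub$.

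For existence I would take $\mathfrak{L}$ to be the (by Lemma~\ref{Lm_UniqueSubThroughFrame} unique) $q'$-subline of the line $P_0 S$ through the frame $\{P_0,S,T\}$. Since $\mathfrak{L}$ meets $\Sub$ in $P_0$ and spans a line not contained in $\Pi_1=\vspanq{\Sub}$, Lemma~\ref{Lm_UniqueSubThroughHyperSubAndLine} (applied with $\Sub$ in the role of the $(m-1)$-dimensional $q'$-subgeometry) yields an $m$-dimensional $q'$-subgeometry $\BigSub$ containing both $\Sub$ and $\mathfrak{L}$; this $\BigSub$ automatically contains $S$ and $T\in\Pi_3\setminus\Sigma_\HyperSub$.

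The main obstacle is uniqueness, because $\Sub$ and $S$ alone admit $(q-1)/(q'-1)$ distinct $m$-dimensional $q'$-extensions (parametrised by the $q'$-subline of $P_0 S$ that lies inside each), and a priori each of these may meet $\Pi_3$ in many points. Given any such $\BigSub'$ meeting $\Pi_3\setminus\Sigma_\HyperSub$, a standard $\FF_{q'}$-dimension count shows that the intersection of $\BigSub'$ with a line of $\pg(m,q)$ has $0$, $1$, or $q'+1$ points; since $P_0,S\in\BigSub'$ both lie on $P_0 S$, the intersection $\mathfrak{L}':=\BigSub'\cap(P_0 S)$ is a $q'$-subline through $P_0$ and $S$, and Lemma~\ref{Lm_UniqueSubThroughHyperSubAndLine} recovers $\BigSub'$ from $\Sub$ and $\mathfrak{L}'$. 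Fixing any third point $T^*=(1,t^*,s_2,\dots,s_m)$ of $\mathfrak{L}'$ with $t^*\in\FF_q^*$, a generic point of $\BigSub'$ has coordinate vector $(k_T,\,k_1+k_Tt^*,\,k_2+k_Ts_2,\,\dots,\,k_m+k_Ts_m)$ with $k_1,\dots,k_m,k_T\in\FF_{q'}$, so the equation $X_0=X_1$ defining $\Pi_3$ reduces to the $\FF_{q'}$-linear condition $k_1=k_T(1-t^*)$. If $t^*\notin\FF_{q'}$, the coefficient $1-t^*$ lies outside $\FF_{q'}$, forcing $k_T=k_1=0$ and hence $\BigSub'\cap\Pi_3\subseteq\HyperSub\subset\Sigma_\HyperSub$, contradicting the hypothesis. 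Thus $t^*\in\FF_{q'}^*$, which forces $\mathfrak{L}'$ to be the $q'$-subline through $P_0$, $S$, and $T$, so $\mathfrak{L}'=\mathfrak{L}$ and $\BigSub'=\BigSub$.
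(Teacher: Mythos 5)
Your architecture---reduce uniqueness to pinning down the $q'$-subline $\mathfrak{L}'=\BigSub'\cap P_0S$ and apply Lemma~\ref{Lm_UniqueSubThroughHyperSubAndLine}---is sound, but the coordinate description of $\BigSub'$ that the key calculation rests on is incorrect. You write that the points of $\BigSub'$ are $(k_T,\,k_1+k_Tt^*,\,k_2+k_Ts_2,\,\dots,\,k_m+k_Ts_m)$ with $k_1,\dots,k_m,k_T\in\FF_{q'}$; this is the $\FF_{q'}$-span of $T^*$ and $E_1,\dots,E_m$, and when $t^*\notin\FF_{q'}$ that set does \emph{not} contain $S$: matching coordinates forces $k_T\in\FF_{q'}^*$ and $k_1=-k_Tt^*$, which lies in $\FF_{q'}$ only if $t^*\in\FF_{q'}$. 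Since $S\in\BigSub'$ by hypothesis, your parametrised set can equal $\BigSub'$ only when $t^*\in\FF_{q'}$ already, so as written the argument presupposes its conclusion. Running the construction of Lemma~\ref{Lm_UniqueSubThroughHyperSubAndLine} on the frame $\{E_2,\dots,E_m,(0,1,1,\dots,1),S,T^*\}$ shows that $\BigSub'$ is in fact the $\FF_{q'}$-span of $\{S,\,t^*E_1,\,t^*E_2,\dots,\,t^*E_m\}$, i.e.\ the points $(\nu,\,\mu t^*,\,\nu s_2+\kappa_2t^*,\dots,\nu s_m+\kappa_mt^*)$ with $\nu,\mu,\kappa_2,\dots,\kappa_m\in\FF_{q'}$. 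Fortunately the corrected computation still closes the argument: intersecting with $\Pi_3:X_0=X_1$ gives $\nu=\mu t^*$, which for $t^*\notin\FF_{q'}$ forces $\mu=\nu=0$ and hence $\BigSub'\cap\Pi_3\subseteq\HyperSub\subseteq\Sigma_\HyperSub$, contradicting the hypothesis. So the gap is repairable, but the parametrisation as you wrote it is wrong.

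For comparison, the paper proves uniqueness without coordinates and much more briefly: for any $R\in\Sub\setminus\HyperSub$ (your $P_0$ will do), the line $RS$ meets $\Pi_3$ in a single point $T\notin\Sigma_\HyperSub$. Any candidate $\BigSub'$ satisfies that $\BigSub'\cap\Pi_3$ is a hyperplane of the subgeometry $\BigSub'$ (it contains $\HyperSub$ together with a point outside $\Sigma_\HyperSub$), so the $q'$-subline $\BigSub'\cap RS$, not being contained in that hyperplane, meets it in a point, which can only be $RS\cap\Pi_3=T$. Hence every candidate contains the unique $q'$-subline through $R$, $S$, $T$, and Lemma~\ref{Lm_UniqueSubThroughHyperSubAndLine} immediately gives uniqueness. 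That sidesteps the need to classify which $t^*\in\FF_q^*$ are admissible, which is where your approach went astray.
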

	\begin{proof}
	    Take a point $R\in\Sub\setminus\HyperSub$.
	    Then the line $RS$ has to intersect $\Pi_3$ in a point $T\notin\Sigma_\HyperSub$.
	    Any $m$-dimensional $q'$-subgeometry that contains $\Sub$ and $S$ and intersects $\Pi_3$ in a point outside of $\Sigma_\HyperSub$, has to contain $T$ and, in particular, the (by Lemma \ref{Lm_UniqueSubThroughFrame}) unique $q'$-subline defined by $R$, $S$ and $T$.
	    By Lemma \ref{Lm_UniqueSubThroughHyperSubAndLine}, there exists exactly one such $q'$-subgeometry.
	\end{proof}
	
	\begin{df}
	    Consider Configuration \ref{Conf_BasicThreeSubspaceConstruction}.
	    Then we can define, for any point $S\in\ppointsSub^{\Pi_2}$, the \emph{projection map}
	    \[
	        \pr{\Pi_1}{\Pi_3}{S}:\llinesSub^{\Pi_1}\rightarrow\llinesSub^{\Pi_3}:\Sub\setminus\HyperSub\mapsto\left(\BigSub\cap\Pi_3\right)\setminus\HyperSub\textnormal{,}
	    \]
	    and the \emph{shadow map}
	    \[
	        \sh{\Pi_1}{\Pi_3}{S}:\llinesSub^{\Pi_1}\rightarrow\llinesSub^{\Pi_2}:\Sub\setminus\HyperSub\mapsto\left(\BigSub\cap\Pi_2\right)\setminus\HyperSub\textnormal{,}
	    \]
	    with $\BigSub$ the (by Lemma \ref{Lm_UniqueBigSubOverConstruction}) unique $m$-dimensional $q'$-subgeometry containing $\Sub$ and $S$ and intersecting $\Pi_3\setminus\Sigma_\HyperSub$.
	    Furthermore, for a fixed element $\Sub\setminus\HyperSub\in\llinesSub^{\Pi_1}$, we can naturally extend the definition above and define, for any subset $\mathcal{T}\subseteq\ppointsSub^{\Pi_2}$,
	    \[
	        \prArg{\Pi_1}{\Pi_3}{\mathcal{T}}{\Sub\setminus\HyperSub}:=\bigcup_{S\in\mathcal{T}}\prArg{\Pi_1}{\Pi_3}{S}{\Sub\setminus\HyperSub}
	    \]
	    and
	    \[
	        \shArg{\Pi_1}{\Pi_3}{\mathcal{T}}{\Sub\setminus\HyperSub}:=\bigcup_{S\in\mathcal{T}}\shArg{\Pi_1}{\Pi_3}{S}{\Sub\setminus\HyperSub}\textnormal{.}
	    \]
	\end{df}
	
	\begin{lm}\label{Lm_ProjectionAndShadowStayParallel}
	    Consider Configuration \ref{Conf_BasicThreeSubspaceConstruction}.
	    Let $\Sub\setminus\HyperSub\in\llinesSub^{\Pi_1}$ and $S_1,S_2\in\ppointsSub^{\Pi_2}$.
	    Then $\shArg{\Pi_1}{\Pi_3}{S_1}{\Sub\setminus\HyperSub}$ and $\shArg{\Pi_1}{\Pi_3}{S_2}{\Sub\setminus\HyperSub}$ are $\restr{\widetilde{\varphi}}{\Pi_2}$-parallel.
	\end{lm}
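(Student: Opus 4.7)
My plan is to exploit the characterisation of $\restr{\widetilde\varphi}{\Pi_2}$-parallelism that is established inside the proof of Lemma~\ref{Lm_Invariance}: two lines of $\llinesSub^{\Pi_2}$ are parallel if and only if one can choose pairs $P,Q$ on the first line and $\overline P,\overline Q$ on the second, each pair lying on a projective line through the designated point $E'$, so that the induced tuples of coordinate-differences are $\FF_{q'}$-proportional. The invariance also proved in that lemma lets me fix convenient coordinates on $\pg(m,q)$.

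Concretely, I would set $\Pi_1=\{X_0=0\}$, $\Pi_2=\{X_1=0\}$, $\Pi_3=\{X_0=X_1\}$ and $\Sigma_\HyperSub=\{X_0=X_1=0\}$, and pick the frame of $\HyperSub$ so that $E'_H:=(0,0,1,1,\ldots,1)$ simultaneously plays the role of $E'$ in the Configuration~\ref{Conf_BasicPointLineGeometryConstruction}-system attached to both $\Pi_1$ and $\Pi_2$. I would then choose $P,Q\in\Sub\setminus\HyperSub$ with $PQ\cap\Sigma_\HyperSub=E'_H$, which normalises to $P=(0,1,p_2,\ldots,p_m)$ and $Q=P+bE'_H$ for some $b\in\FF_q^*$ depending only on $\Sub$. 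For $S_i=(1,0,s^{(i)}_2,\ldots,s^{(i)}_m)\in\ppointsSub^{\Pi_2}$ the point $T_i:=PS_i\cap\Pi_3$ lies outside $\Sigma_\HyperSub$ and satisfies $S_i=P+T_i$, so by Lemma~\ref{Lm_UniqueSubThroughHyperSubAndLine} $\BigSub_i$ is the unique $m$-dimensional subgeometry obtained by extending $\Sub$ through the subline $PS_iT_i$.

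The key step is to introduce the point $U_i:=Q+T_i$. A direct vector identity gives $bE'_H=Q-S_i+T_i$, which at the same time shows that $E'_H$ lies in $\BigSub_i\cap\pi_i$ (where $\pi_i:=\vspanq{P,Q,S_i}=\vspanq{Q,S_i,T_i}$) and identifies $\{Q,S_i,T_i,E'_H\}$ as a frame of this $q'$-subplane. The coefficients $(1,0,1)$ of $U_i$ over this frame lie in $\FF_{q'}$, so $U_i\in\BigSub_i$; the same computation shows $U_i=S_i+bE'_H\in\Pi_2\setminus\HyperSub$, so $U_i$ belongs to the shadow $\shArg{\Pi_1}{\Pi_3}{S_i}{\Sub\setminus\HyperSub}$, and $S_iU_i$ passes through $E'_H$. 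Since the coordinate-difference of $S_i$ and $U_i$ inside $\Pi_2$ equals $(0,b,b,\ldots,b)$ and is manifestly independent of $i$, the parallelism criterion recalled above applies with $\gamma=1\in\FF_{q'}$, yielding the desired conclusion.

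The main obstacle I expect is spotting the identity $bE'_H=Q-S_i+T_i$, since it is what both pins down the $q'$-subplane $\BigSub_i\cap\pi_i$ via Lemma~\ref{Lm_UniqueSubThroughFrame} and exhibits $U_i$ as a single vector with two useful expressions, one revealing its $\FF_{q'}$-arithmetic inside $\BigSub_i$ and the other revealing the $i$-independent shadow direction. Once this identity is in hand, the remaining verifications are routine coordinate bookkeeping.
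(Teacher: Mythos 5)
Your plan is essentially the paper's: both normalise coordinates on $\pg(m,q)$ via the invariance of Lemma~\ref{Lm_Invariance}, then show the shadow's direction from $S_i$ is an $\FF_{q'}$-multiple of $E'_H$ that does not depend on $i$. The paper gets there by applying Lemma~\ref{Lm_CoordinatesOfPointsOfSub} (with $\Sub$ in the role of the $(m-1)$-dimensional subgeometry and $S_1,T_1$ as the two points) to parametrise all of $\BigSub_i$, restricting to $\Pi_2$, and reading off the parallel class after pushing through $\restr{\varphi}{\Pi_2}$. You instead produce a single witness point $U_i$ via a frame argument inside the auxiliary plane $\pi_i=\vspanq{P,Q,S_i}$ and appeal to the intrinsic parallelism criterion extracted inside the proof of Lemma~\ref{Lm_Invariance}. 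That is a modest and legitimate shortcut, but it rests on the same underlying computation.

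The pivotal identity you single out, however, is off by a sign, and the error propagates consistently through the sketch. With $\Pi_3=\{X_0=X_1\}$, $P=(0,1,p_2,\ldots,p_m)$ and $S_i=(1,0,s_2,\ldots,s_m)$, the intersection is $T_i=P+S_i$ (so $S_i=T_i-P$, not $P+T_i$), and the vector identity is $bE'_H=Q+S_i-T_i$, not $Q-S_i+T_i$. Accordingly the witness should be $U_i:=S_i+bE'_H=Q+2S_i-T_i$ rather than $Q+T_i$, and its coefficients over the frame $\{Q,S_i,T_i,E'_H\}$ (normalised so that representatives of $Q$, $S_i$, $-T_i$ sum to a representative of $E'_H$) are $(1,2,1)$, not $(1,0,1)$. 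Since you carried the flipped sign uniformly, every step survives the correction: the coefficients of $U_i$ still lie in $\FF_{q'}$, so $U_i\in\BigSub_i$; $U_i$ still lies in $\Pi_2\setminus\Sigma_\HyperSub$ on the line $S_iE'_H$; and $U_i-S_i=bE'_H$ is still independent of $i$, which is exactly what the parallelism criterion needs. The idea is sound; only the bookkeeping needed fixing.
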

	\begin{proof}
	    Choose coordinates for $\pg(m,q)$ and consider the points
	    \[
	        E_0(1,0,0,\dots,0), E_1(0,1,0,\dots,0),\dots, E_m(0,0,\dots,0,1)
	    \]
	    and
	    \[
	         E(1,1,1,\dots,1), E'(0,1,1,\dots,1) \textnormal{ and } E''(0,0,1,\dots,1)\textnormal{.}
	    \]
	    By Lemma \ref{Lm_Invariance} we can assume, without loss of generality, that
	    \begin{itemize}
	        \item $\HyperSub$ is (by Lemma \ref{Lm_UniqueSubThroughFrame}) uniquely defined by the points $E_2,\dots,E_m$ and $E''$,
	        \item $E_1$ is a point of $\Sub$, and
	        \item $E_0$ and $E$ are the points $l\cap\Pi_2$ and $l\cap\Pi_3$, respectively, with $l\nsubseteq\Pi_1$ an arbitrarily chosen line intersecting $\Pi_1$ in a point of $\Sub\setminus\left(\HyperSub\cup\{E_1\}\right)$.
	    \end{itemize}
	    In this way, $\Sub$ is (indirectly by Lemma \ref{Lm_UniqueSubThroughHyperSubAndLine}) uniquely defined by $\HyperSub$, $E_1$ and $E'\in E_0E=l$.
	    Assuming $S_1$ has coordinates $(1,0,x_2,\dots,x_m)$ ($x_i\in\FF_q$), the line $E'S_1$ intersects $\Pi_3$ in a point $T_1$ with coordinates $(1,1,1+x_2,\dots,1+x_m)$.
	    
	    By Lemma \ref{Lm_UniqueBigSubOverConstruction}, there exists a unique $m$-dimensional $q'$-subgeometry $\BigSub$ containing $\Sub$, $S_1$ and $T_1$.
	    By Lemma \ref{Lm_CoordinatesOfPointsOfSub}, the set of coordinates of all points in $\BigSub\setminus\Sub$ is equal to
	    \[
		    \left\{\big(1,k_1,x_2+k_2,\dots,x_m+k_m\big):k_1,\dots,k_m\in\FF_{q'}\right\}\textnormal{.}
		\]
	    As a consequence, the set of coordinates of all points of $\shArg{\Pi_1}{\Pi_3}{S_1}{\Sub\setminus\HyperSub}$ is equal to
	    \begin{equation}\label{Eq_ShadowCoordinates}
		    \left\{\big(1,0,x_2+k_2,\dots,x_m+k_m\big):k_2,\dots,k_m\in\FF_{q'}\right\}\textnormal{.}
		\end{equation}
		Restricting these coordinates to the geometry $\pg(m-1,q)\cong\Pi_2$ (hence by ignoring the second coordinate $0$), the set of coordinates of the images of all points \eqref{Eq_ShadowCoordinates} under $\restr{\varphi}{\Pi_2}$ is, as in \eqref{Eq_Coordinates}, equal to
		\[
		    \left\{\left(\textnormal{coordinates of }\restr{\varphi}{\Pi_2}(S_1)\right)+k\cdot\left(0,1,0,\dots,0\right):k\in\FF_{(q')^{m-1}}\right\}\textnormal{.}
		\]
		As the line parallel class of the affine line that arises in this way does not rely on the choice of the point $S_1\in\ppointsSub^{\Pi_2}$, the lemma is proven.
	\end{proof}
	
	\begin{lm}\label{Lm_ProjectionAndShadowGetBigger}
	    Consider Configuration \ref{Conf_BasicThreeSubspaceConstruction}.
	    Let $\Sub_1\setminus\HyperSub,\Sub_2\setminus\HyperSub,\dots,\Sub_j\setminus\HyperSub$ be $j$ distinct, $\restr{\widetilde{\varphi}}{\Pi_1}$-independent elements of $\llinesSub^{\Pi_1}$ sharing a point $F\in\ppointsSub^{\Pi_1}$ $(j\in\{1,2,\dots,\rho\})$ and suppose $\mathcal{T}\subseteq\ppointsSub^{\Pi_2}$ is a $(j-1)$-dimensional affine $\restr{\widetilde{\varphi}}{\Pi_2}$-subspace.
	    Then there exists a $k\in\{1,2,\dots,j\}$ such that $\prArg{\Pi_1}{\Pi_3}{\mathcal{T}}{\Sub_k\setminus\HyperSub}$ is a $j$-dimensional affine $\restr{\widetilde{\varphi}}{\Pi_2}$-subspace.
	\end{lm}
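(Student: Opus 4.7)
\textit{Plan.} My plan is to identify a suitable $k$ via a direction-counting argument and then use injectivity plus affine structure to build the required $j$-dimensional subspace. (I read the lemma's ``$\restr{\widetilde{\varphi}}{\Pi_2}$-subspace'' as a typo for ``$\restr{\widetilde{\varphi}}{\Pi_3}$-subspace'', since the projection lands in $\Pi_3$.) By swapping the roles of $\Pi_2$ and $\Pi_3$ in the coordinate argument of Lemma~\ref{Lm_ProjectionAndShadowStayParallel} (which is symmetric in those two hyperplanes), one obtains that for any fixed $\Sub_k\setminus\HyperSub\in\llinesSub^{\Pi_1}$ the projections $\prArg{\Pi_1}{\Pi_3}{S}{\Sub_k\setminus\HyperSub}$ with $S\in\ppointsSub^{\Pi_2}$ are pairwise $\restr{\widetilde{\varphi}}{\Pi_3}$-parallel with a common direction $\delta_k^{(3)}\in\AffineHyperSub$ depending only on $\Sub_k$. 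Two points $S_1,S_2\in\ppointsSub^{\Pi_2}$ yield the same projected line iff $S_2\in\shArg{\Pi_1}{\Pi_3}{S_1}{\Sub_k\setminus\HyperSub}$; since these shadow lines share a fixed direction $\delta_k^{(2)}\in\AffineHyperSub$ by Lemma~\ref{Lm_ProjectionAndShadowStayParallel}, the map $S\mapsto L_S:=\prArg{\Pi_1}{\Pi_3}{S}{\Sub_k\setminus\HyperSub}$ is injective on $\mathcal{T}$ exactly when $\delta_k^{(2)}$ lies outside the $(j-2)$-dimensional direction subspace of $\mathcal{T}$.

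\textit{Choice of $k$.} I would then argue that the shadow directions $\delta_1^{(2)},\ldots,\delta_j^{(2)}$ are $j$ independent points of $\AffineHyperSub\cong\pg(\rho,q')$. This leverages the $\restr{\widetilde{\varphi}}{\Pi_1}$-independence of $\Sub_1,\ldots,\Sub_j$: through the coordinate computation of Lemma~\ref{Lm_ProjectionAndShadowStayParallel}, one checks that $\delta_k^{(2)}$ depends only on the $\restr{\widetilde{\varphi}}{\Pi_1}$-direction of $\Sub_k$, and that the resulting assignment is induced by a collineation between the natural copies of $\AffineHyperSub$ attached to $\Pi_1$ and $\Pi_2$, so it transports independence. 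Since a $(j-2)$-dimensional subspace cannot contain $j$ independent points, some $\delta_k^{(2)}$ is not in the direction of $\mathcal{T}$; this is my chosen $k$.

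\textit{Assembly and main obstacle.} For this $k$, injectivity produces $(q')^{(j-1)(m-1)}$ distinct pairwise parallel lines $L_S$ in $\Pi_3$. I would finish by arguing that $S\mapsto L_S$ descends to an injective affine map from $\mathcal{T}$ into the parallel class of lines of $\Pi_3$ with direction $\delta_k^{(3)}$, so that the image is a $(j-1)$-dimensional affine subspace of that parallel class; the preimage (the union of the $L_S$'s) is then a $j$-dimensional affine $\restr{\widetilde{\varphi}}{\Pi_3}$-subspace of $\Pi_3$. \textbf{The main obstacle} is the independence claim for the shadow directions; verifying it demands careful coordinate tracking across the three coordinate systems of $\Pi_1,\Pi_2,\Pi_3$, in the spirit of (but more involved than) the proof of Lemma~\ref{Lm_ProjectionAndShadowStayParallel}.
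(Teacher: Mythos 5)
Your overall strategy matches the paper's --- find some $\Sub_k$ whose associated direction in $\Pi_2$ escapes the direction subspace of $\mathcal{T}$, then use it to pass from a $(j-1)$-dimensional to a $j$-dimensional affine $\widetilde{\varphi}$-subspace --- and you correctly spot that the stated conclusion should read $\restr{\widetilde{\varphi}}{\Pi_3}$ rather than $\restr{\widetilde{\varphi}}{\Pi_2}$. However, the step you yourself flag as ``the main obstacle'' is a genuine gap, not merely a verification you ran out of time for: you assert that the shadow directions $\delta_1^{(2)},\ldots,\delta_j^{(2)}$ are $j$ independent points of $\AffineHyperSub$ on the grounds that $\Sub_k\mapsto\delta_k^{(2)}$ is ``induced by a collineation'', but you give no construction of that collineation, and the brute-force coordinate tracking across $\Pi_1,\Pi_2,\Pi_3$ that you propose is precisely what the paper avoids.

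The missing idea is the paper's choice of auxiliary centre. Fix $F'\in\mathcal{T}$ and set $F'':=FF'\cap\Pi_3$. Projection from $F''$ is a natural projectivity $\Pi_1\to\Pi_2$ fixing $\Sigma_\HyperSub$ pointwise, hence mapping $\llinesSub^{\Pi_1}$ to $\llinesSub^{\Pi_2}$ and preserving $\widetilde{\varphi}$-independence by Lemma~\ref{Lm_Invariance}; crucially, the projected image of $\Sub_k$ equals exactly $\shArg{\Pi_1}{\Pi_3}{F'}{\Sub_k\setminus\HyperSub}$, which is the explicit collineation you were hoping existed, obtained for free. The $j$ projected lines $\Sub_1',\ldots,\Sub_j'$ are concurrent at $F'$ and $\restr{\widetilde{\varphi}}{\Pi_2}$-independent, so since $\mathcal{T}$ is $(j-1)$-dimensional some $\Sub_k'$ meets $\mathcal{T}$ only in $F'$; Lemma~\ref{Lm_ProjectionAndShadowStayParallel} then gives that $\shArg{\Pi_1}{\Pi_3}{\mathcal{T}}{\Sub_k\setminus\HyperSub}$ is a union of $|\mathcal{T}|$ parallel lines each meeting $\mathcal{T}$ once, i.e.\ a $j$-dimensional affine $\restr{\widetilde{\varphi}}{\Pi_2}$-subspace, and a final projection through $F$ onto $\Pi_3$ carries this to $\prArg{\Pi_1}{\Pi_3}{\mathcal{T}}{\Sub_k\setminus\HyperSub}$. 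This last transfer is also cleaner than your assembly step, which would additionally have to produce, from scratch, a $(j-1)$-dimensional affine $\restr{\widetilde{\varphi}}{\Pi_3}$-subspace transversal to the family $\{L_S\}$.
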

	\begin{proof}
	    Choose a point $F'\in\mathcal{T}$ and define $F'':=FF'\cap\Pi_3$.
	    The projection of points of $\Pi_1$ onto $\Pi_2$ through the point $F''$ is a natural projectivity between the spaces when interpreted as distinct projective geometries.
	    Hence, if one projects $\Sub_1\setminus\HyperSub,\Sub_2\setminus\HyperSub,\dots,\Sub_j\setminus\HyperSub$ onto $\Pi_2$ in this way, we obtain $j$ distinct, $\restr{\widetilde{\varphi}}{\Pi_2}$-independent elements $\Sub_1'\setminus\HyperSub,\Sub_2'\setminus\HyperSub,\dots,\Sub_j'\setminus\HyperSub$ of $\llinesSub^{\Pi_2}$ sharing the point $F'\in\mathcal{T}$.
	    As $\mathcal{T}$ is a $(j-1)$-dimensional affine $\restr{\widetilde{\varphi}}{\Pi_2}$-subspace, there has to exist a $\Sub_k'\setminus\HyperSub\in\llinesSub^{\Pi_2}$ which has only the point $F'$ in common with $\mathcal{T}$.
	    Moreover, it is easy to see that $\Sub_k'\setminus\HyperSub=\shArg{\Pi_1}{\Pi_3}{F'}{\Sub_k\setminus\HyperSub}$.
	    Hence, by Lemma \ref{Lm_ProjectionAndShadowStayParallel}, $\shArg{\Pi_1}{\Pi_3}{\mathcal{T}}{\Sub_k\setminus\HyperSub}$ is a union of $|\mathcal{T}|$ distinct, $\restr{\widetilde{\varphi}}{\Pi_2}$-parallel elements of $\llinesSub^{\Pi_2}$, each containing a unique point of $\mathcal{T}$.
	    In other words, $\shArg{\Pi_1}{\Pi_3}{\mathcal{T}}{\Sub_k\setminus\HyperSub}$ is a $j$-dimensional affine $\restr{\widetilde{\varphi}}{\Pi_2}$-subspace.
	    By considering the natural projection of points of $\Pi_2$ onto $\Pi_3$ through $F$, one can easily check that $\shArg{\Pi_1}{\Pi_3}{\mathcal{T}}{\Sub_k\setminus\HyperSub}$ gets projected onto $\prArg{\Pi_1}{\Pi_3}{\mathcal{T}}{\Sub_k\setminus\HyperSub}$.
	\end{proof}
	
	\subsection{A detour $\boldsymbol{Y(\rho,m,q')\sim X(\rho,m,q')\sim T^*(\AffineHyperSub_{\rho,m,q'})}$ using field reduction}\label{Subsec_IsomorphismXFieldRed}
	
	In Subsection \ref{Subsec_IsomorphismYCoordinates}, we successfully obtained an isomorphism between $Y(\rho,m,q')$ and $T^*(\AffineHyperSub_{\rho,m,q'})$ using coordinates.
    The notation $Y(\rho,m,q')$, however, was chosen for a reason, as we want to emphasize its link with the point-line geometry $X(\rho,m,q')$ introduced by De Winter, Rottey and Van de Voorde.
    
    \begin{df}[{\cite[Section $3$]{DeWinterRotteyVandeVoorde}}]\label{Def_PointLineX}
	    Let $m\in\NNnot$.
	    Consider a $\rho$-dimensional subspace $\pi$ of $\pg(\rho+m,q')$.
	    The point-line geometry $X(\rho,m,q')$ is the incidence structure $(\ppointsX,\llinesX)$ with natural incidence, where
	    \begin{itemize}
	        \item $\ppointsX$ is the set of all $(m-1)$-spaces of $\pg(\rho+m,q')$ disjoint to $\pi$, and
	        \item $\llinesX$ is the set of all $m$-spaces of $\pg(\rho+m,q')$ meeting $\pi$ exactly in one point.
	    \end{itemize}
	\end{df}
    
    In their work, the authors construct an explicit isomorphism between $X(\rho,m,q')$ and $T^*(\AffineHyperSub_{\rho,m,q'})$.
    
    \begin{thm}[{\cite[Theorem $4.1$]{DeWinterRotteyVandeVoorde}}]\label{Thm_DeWinterIsomorphism}
	    Let $m\in\NNnot$.
	    Let $\AffineHyperSub_{\rho,m,q'}$ be a $\rho$-dimensional $q'$-subgeometry of $\pg\big(\rho,(q')^m\big)$.
	    Then the point-line geometries $X(\rho,m,q')$ and $T^*(\AffineHyperSub_{\rho,m,q'})$ are isomorphic.
	\end{thm}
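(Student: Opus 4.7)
My plan is to follow the coordinate-based recipe used for Theorem \ref{Thm_SubgeometriesAreAffineLines}: choose adapted coordinates on both ambient spaces, write down an explicit bijection $\Psi$ on point sets, and verify that lines are sent to lines. Since both $\ppointsX$ and $\ppointsAff$ are in natural bijection with $\FF_{q'}^{m(\rho+1)}$, the heart of the matter is the matching of line structures.

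First, in $\pg(\rho+m,q')$ I would fix a frame so that $\pi=\vspan{e_0,\ldots,e_\rho}$ and pick a complementary $(m-1)$-space; every $(m-1)$-space $\sigma$ of $\pg(\rho+m,q')$ disjoint from $\pi$ then has a unique normal form as the row space of $[A\mid I_m]$ for some $A\in M_{m\times(\rho+1)}(\FF_{q'})$. In $\pg\big(\rho+1,(q')^m\big)$ I would fix an $\FF_{q'}$-basis $\{1,\beta,\ldots,\beta^{m-1}\}$ of $\FF_{(q')^m}$ and adapt coordinates so that $\AffineHyperSub_{\rho,m,q'}$ is the $q'$-subgeometry of the hyperplane at infinity with homogeneous coordinates in $\{0\}\times\FF_{q'}^{\rho+1}$. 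Then I would define $\Psi$ by sending the $(m-1)$-space represented by $A=(a_{i,j})_{1\leq i\leq m,\,0\leq j\leq\rho}$ to the affine point with coordinates $(1:w_1:\ldots:w_{\rho+1})$, where $w_{j+1}:=\sum_{i=1}^m a_{i,j}\beta^{i-1}$; this is visibly a bijection between $\ppointsX$ and $\ppointsAff$.

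Next I would verify the line correspondence. Fix $P=(p_0:\ldots:p_\rho:0:\ldots:0)\in\pi$ and an $m$-space $\Lambda$ meeting $\pi$ only at $P$. A short calculation, writing every $(m-1)$-space of $\Lambda$ disjoint from $\pi$ as a combination of $P$ and the rows of a fixed reference $[A_0\mid I_m]\subset\Lambda$, parameterizes the $(q')^m$ such $(m-1)$-spaces as row spaces $[A_0+\alpha\,\mathbf{p}^T\mid I_m]$ with $\alpha\in\FF_{q'}^m$ and $\mathbf{p}=(p_0,\ldots,p_\rho)^T$. Substituting into $\Psi$, the coordinates of $\Psi(\sigma)$ are obtained from those of $\Psi(\sigma_0)$ by adding $\alpha^\ast\cdot(0:p_0:\ldots:p_\rho)$, where $\alpha^\ast:=\sum_i\alpha_i\beta^{i-1}$ ranges over all of $\FF_{(q')^m}$ as $\alpha$ varies. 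This traces out exactly the affine part of a line of $\pg\big(\rho+1,(q')^m\big)$ through $\Psi(\sigma_0)$ meeting the hyperplane at infinity in the point $(0:p_0:\ldots:p_\rho)\in\AffineHyperSub_{\rho,m,q'}$, hence an element of $\llinesAff$; reversing the construction yields surjectivity on lines.

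The main obstacle is the calculation just sketched: the $\FF_{q'}$-affine family of $(m-1)$-spaces of $\Lambda$ avoiding $\pi$ must collapse, under $\Psi$, to the $\FF_{(q')^m}$-parameterization of a single affine line through one fixed point of $\AffineHyperSub_{\rho,m,q'}$. This hinges on the fact that $\alpha\mapsto\alpha^\ast$ is precisely the $\FF_{q'}$-linear isomorphism $\FF_{q'}^m\xrightarrow{\sim}\FF_{(q')^m}$ induced by the basis $\{1,\beta,\ldots,\beta^{m-1}\}$, the same bridge between $\FF_{q'}$- and $\FF_{(q')^m}$-structure exploited in Definition \ref{Def_AffineIsomorphism}. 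An alternative route, bypassing explicit coordinates, would apply field reduction to $\pg\big(\rho+1,(q')^m\big)$ to land inside $\pg\big((\rho+2)m-1,q'\big)$ with a Desarguesian $(m-1)$-spread, identify $\AffineHyperSub_{\rho,m,q'}$ with a $\rho$-space $\pi^\ast$ inside the image of the hyperplane at infinity, and project from a complement of $\pi^\ast$ in that image onto a transverse $\pg(\rho+m,q')$; the line-to-line correspondence then becomes a direct consequence of Grassmann's identity.
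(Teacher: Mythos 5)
The paper does not actually prove this theorem: it is cited verbatim from De Winter, Rottey and Van de Voorde, and the surrounding text merely records that those authors construct an explicit isomorphism. So there is no internal proof to measure your argument against, only the external reference. With that caveat, your coordinate reconstruction is correct and self-contained, and it is the natural companion to the paper's own coordinate proof of Theorem \ref{Thm_SubgeometriesAreAffineLines}.

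The pivotal computation checks out. An $(m-1)$-space $\sigma$ disjoint from $\pi$ projects isomorphically onto the last $m$ coordinates, so the reduced row form $[A\mid I_m]$ with $A\in M_{m\times(\rho+1)}(\FF_{q'})$ is indeed a unique normal form, giving the bijection $\ppointsX\leftrightarrow\FF_{q'}^{m(\rho+1)}\leftrightarrow\ppointsAff$. Inside an $m$-space $\Lambda$ with $\Lambda\cap\pi=\{P\}$, every hyperplane of $\Lambda$ avoiding $P$ is the graph of a linear map from a fixed complement $\sigma_0\subset\Lambda$ into $\langle P\rangle$, which is exactly the parameterization $[A_0+\alpha\,\mathbf{p}^T\mid I_m]$, $\alpha\in\FF_{q'}^m$. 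Applying $\Psi$ column-wise gives $w_{j+1}=w_{j+1}^{(0)}+\alpha^\ast p_j$ with $\alpha^\ast=\sum_i\alpha_i\beta^{i-1}$, and the $\FF_{q'}$-linear identification $\alpha\mapsto\alpha^\ast$ of $\FF_{q'}^m$ with $\FF_{(q')^m}$ is precisely the bridge you identify as the crux; it converts the $(q')^m$-element family into the affine part of one line through the point $(0:p_0:\cdots:p_\rho)$ of $\AffineHyperSub_{\rho,m,q'}$. Surjectivity on lines is immediate by running this backward: a line of $T^*(\AffineHyperSub_{\rho,m,q'})$ is determined by an affine point $Q'$ and a direction point $D'\in\AffineHyperSub_{\rho,m,q'}$, and $\langle\Psi^{-1}(Q'),\,P\rangle$ with $P\in\pi$ the preimage of $D'$ is the required $m$-space. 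Your field-reduction alternative is plausible and echoes the remark the paper makes at the end of Subsection \ref{Subsec_IsomorphismXFieldRed} about a field-reduction isomorphism (there stated between $Y$ and $X$ rather than between $T^*$ and $X$), but you would need to spell out the projection step to make it a proof; as written it is an outline. The coordinate argument, by contrast, is complete.
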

    
    This theorem, together with Theorem \ref{Thm_SubgeometriesAreAffineLines}, implies that $Y(\rho,m,q')\sim X(\rho,m,q')$.
    
    \bigskip
    As a side note, we would like to mention that it is possible to work the other way around by constructing a direct isomorphism between $Y(\rho,m,q')$ and $X(\rho,m,q')$ (different from the composition of the isomorphisms behind Theorem \ref{Thm_SubgeometriesAreAffineLines} and Theorem \ref{Thm_DeWinterIsomorphism}) by using \emph{field reduction} (see \cite{LavrauwVandeVoorde} for an introduction to this technique).
    Although this results in a more elegant isomorphism between these two point-line geometries, we didn't include it in this work as we needed an explicit isomorphism describing the link between $Y(\rho,m,q')$ and $T^*(\AffineHyperSub_{\rho,m,q'})$ to exploit notions of parallelism and independence of concurrent lines (see Subsection \ref{Subsec_ParallelismIndependence}).
    Besides, the description and proof of this isomorphism easily takes up several pages.
	
	\section{Constructing saturating sets using subgeometries}\label{Sect_Subgeometric}
	
	In this section, we switch our focus back to saturating sets.
	We will construct a point set (see Construction \ref{Constr_Subgeometric}) in $\pg(\n,q)$, $q=(q')^{\rho+1}$, and prove that this is a $\rho$-saturating set.
	The existence of this saturating set directly leads to the main result of this article, namely that $\satbound(\n,\rho)\lesssim\frac{(\rho+1)(\rho+2)}{2}(q')^{\n-\rho}$ (see Theorems \ref{Thm_MainUpperBoundSubgeometric} and \ref{Thm_MainUpperBoundSubgeometricRhoLargerThan1}).
	
	\subsection{Preliminaries}
	
	We will make use of the following concept.
	
	\begin{df}\label{Def_Flower}
        Let $m\in\NNnot$.
        Let $s\in\NN$, let $t\in\NN\setminus\{0,1\}$ and consider an $(s-1)$-subspace $\Sigma$ in $\pg(m,q)$.
        A set of independent\footnote{Alternatively, one can state that $\dim{\tau_1\cap\dots\cap\tau_t}=s-1$ and $\dim{\vspan{\tau_1,\dots,\tau_t}}=s+t-1$.} $s$-subspaces $\Flower:=\{\tau_1,\dots,\tau_t\}$ through $\Sigma$ is called an \emph{$s$-flower with pistil $\Sigma$}.
        The elements of $\Flower$ are called the \emph{petals} of the $s$-flower.
    \end{df}
    
    Furthermore, we will at one point need the following property concerning $q'$-subgeometries.
	
	\begin{lm}\label{Lm_UniqueSubThroughHyperSubAndHyperplane}
		Let $m\in\NNnot$ and suppose $q=(q')^{\rho+1}$.
		Consider an $(m-1)$-dimensional $q'$-subgeometry $\HyperSub_1$ of $\pg(m,q)$ and an $(m-2)$ $q'$-subgeometry $\HyperSub_2\subseteq\HyperSub_1$; define $\Sigma_i:=\vspanq{\HyperSub_i}$.
		Let $\Sub_1$ and $\Sub_2$ be two distinct $m$-dimensional $q'$-subgeometries, both containing $\HyperSub_1$ and a point $F\notin\Sigma_1$, and suppose that $\Pi$ is an $(m-1)$-space through $\Sigma_2$ not equal to $\Sigma_1$ and not containing $F$.
		Then $\Pi$ cannot intersect both $\Sub_1$ and $\Sub_2$ in an $(m-1)$-dimensional $q'$-subgeometry.
	\end{lm}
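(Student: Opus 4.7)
My plan is to argue by contradiction: assume that both $\Pi \cap \Sub_1$ and $\Pi \cap \Sub_2$ are $(m-1)$-dimensional $q'$-subgeometries, call them $\HyperSub_1'$ and $\HyperSub_1''$ respectively, and derive that $\Sub_1 = \Sub_2$ using Lemma \ref{Lm_UniqueSubThroughHyperSubAndLine}.

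The first step is a short dimension count in $\pg(m,q)$ showing that $\Pi \cap \Sigma_1 = \Sigma_2$ (two distinct $(m-1)$-spaces both containing the $(m-2)$-space $\Sigma_2$ meet precisely in $\Sigma_2$). As a consequence, $\HyperSub_1 \cap \Pi = \HyperSub_1 \cap \Sigma_2 = \HyperSub_2$, so both hypothetical hyperplane subgeometries $\HyperSub_1'$ and $\HyperSub_1''$ contain $\HyperSub_2$.

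The heart of the argument is then to fix a point $P \in \HyperSub_1 \setminus \HyperSub_2$ and to compare, in $\Sub_1$ and in $\Sub_2$, the unique $q'$-subline through $F$ and $P$ (which exists since $F$ and $P$ lie in both $m$-dimensional subgeometries, and spans the line $FP$ of $\pg(m,q)$). Call these sublines $\mathfrak{L}_1^P \subseteq \Sub_1$ and $\mathfrak{L}_2^P \subseteq \Sub_2$. Inside $\Sub_i$, the hyperplane subgeometry $\HyperSub_1^{(i)}$ does not contain $F$ (since $F \notin \Pi$), so by Grassmann applied within $\Sub_i$, the subline $\mathfrak{L}_i^P$ meets $\HyperSub_1^{(i)}$ in a unique point; as this intersection point must lie on the line $FP$ and in $\Pi$, it equals $F' := FP \cap \Pi$, which is a well-defined point of $\pg(m,q)$ different from both $F$ and $P$ (the choice $P \notin \HyperSub_2$ ensures $P \notin \Pi$, hence $P \neq F'$).

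So both sublines $\mathfrak{L}_1^P$ and $\mathfrak{L}_2^P$ pass through the three distinct collinear points $F$, $P$, $F'$. Since three distinct points of a line form a frame, Lemma \ref{Lm_UniqueSubThroughFrame} gives $\mathfrak{L}_1^P = \mathfrak{L}_2^P$. This common subline meets $\HyperSub_1$ in $P$ and spans the line $FP \nsubseteq \Sigma_1$ (because $F \notin \Sigma_1$), so the hypotheses of Lemma \ref{Lm_UniqueSubThroughHyperSubAndLine} are met, and therefore the $m$-dimensional $q'$-subgeometry containing $\HyperSub_1$ and this subline is unique, forcing $\Sub_1 = \Sub_2$ — the desired contradiction. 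I do not expect a real obstacle; the only subtlety is the choice $P \in \HyperSub_1 \setminus \HyperSub_2$, which is needed to ensure that $F, P, F'$ are genuinely three distinct points rather than two (if $P \in \HyperSub_2 \subseteq \Pi$ then $F'$ would collapse to $P$ and we would only obtain two points in common, insufficient to pin the subline down via Lemma \ref{Lm_UniqueSubThroughFrame}).
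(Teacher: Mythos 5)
Your proof is correct and takes essentially the same route as the paper: choose a point of $\HyperSub_1\setminus\HyperSub_2$, project it through $F$ into $\Pi$, observe that the resulting triple of collinear points determines a common $q'$-subline through $F$ and a point of $\HyperSub_1$, and then invoke Lemma \ref{Lm_UniqueSubThroughHyperSubAndLine} to force $\Sub_1=\Sub_2$. The paper's argument is terser (and swaps the names of your $P$ and $F'$), while you spell out the Grassmann step inside each $\Sub_i$ and the fact $\HyperSub_1\cap\Pi=\HyperSub_2$ that guarantees the three points are distinct, but these are exactly the implicit steps in the paper's proof, not a different approach.
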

	\begin{proof}
		Suppose that the contrary is true.
		Choose a point $F'\in\HyperSub_1\setminus\HyperSub_2$.
		Then the line $FF'$ intersects $\Pi$ in a point $P$.
		As $\Pi$ intersects both $\Sub_1$ and $\Sub_2$ in a $q'$-subgeometry of maximal dimension, $P$ has to be a point of both $\Sub_1$ as $\Sub_2$.
		Moreover, as both these subgeometries contain $\HyperSub_1\ni F'$ and $F$, the unique $q'$-subline containing $F$, $F'$ and $P$ has to be contained in both $\Sub_1$ and $\Sub_2$.
		By Lemma \ref{Lm_UniqueSubThroughHyperSubAndLine}, this would imply that $\Sub_1=\Sub_2$, a contradiction.
	\end{proof}
	
	Finally, the following lemma is a direct consequence of the \emph{strong blocking set approach} (see Subsection \ref{Subsec_KnownApproaches}).
	
	\begin{lm}[{\cite[Section $3$]{DavydovEtAl1}}]\label{Lm_SubIsRhoSaturating}
		Let $m\in\NN$ and let $q=(q')^{\rho+1}$.
		Then any $m$-dimensional $q'$-subgeometry of $\pg(m,q)$ is a $\rho$-saturating set of $\pg(m,q)$.
	\end{lm}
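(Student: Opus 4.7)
The plan is to reduce to coordinates adapted to the subgeometry and exploit the fact that $[\FF_q:\FF_{q'}]=\rho+1$.

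By Lemma \ref{Lm_UniqueSubThroughFrame}, any two $m$-dimensional $q'$-subgeometries of $\pg(m,q)$ are related by a collineation of $\pg(m,q)$. Since the property of being $\rho$-saturating is invariant under collineations of the ambient space, I may assume without loss of generality that $\Sub$ is the \emph{canonical} $q'$-subgeometry, i.e.\ the set of points of $\pg(m,q)$ admitting a representative in $\FF_{q'}^{m+1}\setminus\{0\}$.

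Fix an $\FF_{q'}$-basis $\{1,\alpha,\alpha^2,\dots,\alpha^\rho\}$ of $\FF_q$, which exists since $q=(q')^{\rho+1}$. For an arbitrary point $P\in\pg(m,q)$ with representative $(x_0,x_1,\dots,x_m)\in\FF_q^{m+1}\setminus\{0\}$, uniquely expand each coordinate as $x_i=\sum_{j=0}^\rho x_{ij}\alpha^j$ with $x_{ij}\in\FF_{q'}$, and regroup:
\[
    (x_0,x_1,\dots,x_m)=\sum_{j=0}^\rho \alpha^j \cdot (x_{0j},x_{1j},\dots,x_{mj}).
\]
Each nonzero tuple $(x_{0j},\dots,x_{mj})\in\FF_{q'}^{m+1}$ represents a point $P_j$ of $\Sub$; since $(x_0,\dots,x_m)\neq 0$, at least one such $P_j$ exists. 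Hence $P$ lies in the $\FF_q$-linear span of at most $\rho+1$ points of $\Sub$, a subspace of projective dimension at most $\rho$ spanned by points of $\Sub$. This shows $\Sub$ $\rho$-saturates every $P\in\pg(m,q)$.

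There is essentially no obstacle here; the argument is really just a field reduction observation. The only detail to verify is that the ``canonical'' assumption loses no generality (handled by Lemma \ref{Lm_UniqueSubThroughFrame}) and that one never obtains the all-zero decomposition (ruled out by $P\neq 0$). The edge case $m\leq\rho$ is also immediate, since then $\Sub$ itself spans $\pg(m,q)$.
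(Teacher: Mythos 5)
Your proof is correct and matches the paper's approach: the paper handles $m\leq\rho$ trivially and, for $m\geq\rho+1$, defers to the field-reduction argument in the cited reference, which is precisely the coordinate decomposition $(x_0,\dots,x_m)=\sum_{j=0}^\rho\alpha^j(x_{0j},\dots,x_{mj})$ that you spell out explicitly. The only cosmetic caveat is that deducing transitivity of $\textnormal{PGL}(m+1,q)$ on $m$-dimensional $q'$-subgeometries uses Lemma \ref{Lm_UniqueSubThroughFrame} \emph{together with} transitivity on frames, but this is standard and does not affect correctness.
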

	\begin{proof}
		Let $\Sub$ be an $m$-dimensional $q'$-subgeometry of $\pg(m,q)$.
		If $m\leq\rho$, we can simply choose a base of the subgeometry $\Sub$; naturally, such a set of points spans the whole space $\pg(m,q)$.
		As this base contains $m+1\leq\rho+1$ points, the proof is done.
		
		If $m\geq\rho+1$, then the proof is exactly the same as described in \cite[proof of Theorem $3.2$]{DavydovEtAl1}.
	\end{proof}
	
	\subsection{Constructing the saturating set}
	
    We will construct a small $\rho$-saturating set of $\pg(\n,q)$ by making use of the following observation.
    
    \begin{lm}\label{Lm_FlowerSaturates}
        Let $m\in\NNnot$, $0<\rho<m$ and $q=(q')^{\rho+1}$.
        Suppose that $\Flower:=\{\tau_1,\dots,\tau_{\rho+1}\}$ is an $(m-\rho)$-flower of $\pg(m,q)$ with certain pistil $\Sigma$.
        Let $\HyperSub\subseteq\Sigma$ be an $(m-\rho-1)$-dimensional $q'$-subgeometry and consider, for every $j\in\{1,2,\dots,\rho+1\}$, $j$ distinct, $\restr{\widetilde{\varphi}}{\tau_j}$-independent elements $\Sub_j^{(1)}\setminus\HyperSub,\Sub_j^{(2)}\setminus\HyperSub,\dots,\Sub_j^{(j)}\setminus\HyperSub$ of $\llinesSub^{\tau_j}$, all sharing a point $F_j\in\ppointsSub^{\tau_j}$.
        Then the point set
        \[
            \SubSat:=\bigcup_{j=1}^{\rho+1}\bigcup_{k=1}^j\left(\Sub_j^{(k)}\setminus\HyperSub\right)
        \]
        $\rho$-saturates all points of $\pg(m,q)$ that do not lie in the span of any $\rho$ petals of $\Flower$.
    \end{lm}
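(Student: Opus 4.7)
Take $P$ a point of $\pg(m,q)$ satisfying the hypothesis. I will exhibit $\rho+1$ points $Q_1 \in A_1, \ldots, Q_{\rho+1} \in A_{\rho+1}$ (where $A_j := \bigcup_{k=1}^{j}(\mathcal{B}_j^{(k)} \setminus \mathcal{C}) \subseteq \mathfrak{B}$) whose projective span is a $\rho$-dimensional subspace through $P$; since the $Q_j$'s lie in distinct petals and hence outside $\Sigma$ in independent directions, this span automatically has dimension exactly $\rho$, witnessing the $\rho$-saturation of $P$ by $\mathfrak{B}$.

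The first step is to build a chain of hyperplanes through $\Sigma$ carrying the position of $P$ forward. Set $\Pi^{(1)} := \tau_1$, and for $j = 2, \ldots, \rho$ define inductively
\[
  V^{(j)} := \vspanq{\tau_j, \Pi^{(j-1)}}, \qquad \Pi^{(j)} := V^{(j)} \cap \vspanq{\tau_{j+1}, \ldots, \tau_{\rho+1}, P},
\]
closing with $V^{(\rho+1)} := \vspanq{\tau_{\rho+1}, \Pi^{(\rho)}}$ and $\Pi^{(\rho+1)} := \vspanq{\Sigma, P}$. Flower independence yields $\tau_j \cap \vspanq{\tau_1, \ldots, \tau_{j-1}} = \Sigma$, and the hypothesis on $P$ blocks the degenerate collapses; an iteration of Grassmann's identity then shows that each $V^{(j)}$ has dimension $m-\rho+1$ and contains three distinct hyperplanes $\tau_j, \Pi^{(j-1)}, \Pi^{(j)}$ through $\Sigma$, exactly the configuration of Configuration~\ref{Conf_BasicThreeSubspaceConstruction}. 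Moreover, the inclusion $\Pi^{(j)} \subseteq \vspanq{\tau_{j+1}, \ldots, \tau_{\rho+1}, P}$ propagates through the recursion to force $V^{(\rho+1)} = \vspanq{\tau_{\rho+1}, P}$, so $\Pi^{(\rho+1)} = \vspanq{\Sigma, P}$ is indeed a hyperplane of $V^{(\rho+1)}$ and contains $P$.

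With the scaffold in place, iteratively apply Lemma~\ref{Lm_ProjectionAndShadowGetBigger} inside each $V^{(j)}$ (with $\Pi_1 = \tau_j$, $\Pi_2 = \Pi^{(j-1)}$, $\Pi_3 = \Pi^{(j)}$), starting from the $1$-dimensional affine subspace $\mathcal{T}_1 := \mathcal{B}_1^{(1)} \setminus \mathcal{C}$. At step $j$, this produces an index $k_j \in \{1,\ldots,j\}$ and a $j$-dimensional affine $\restr{\widetilde{\varphi}}{\Pi^{(j)}}$-subspace $\mathcal{T}_j := \prArg{\tau_j}{\Pi^{(j)}}{\mathcal{T}_{j-1}}{\mathcal{B}_j^{(k_j)} \setminus \mathcal{C}}$. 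Since any $(\rho+1)$-dimensional affine $\widetilde{\varphi}$-subspace fills $\Pi^{(\rho+1)} \setminus \Sigma$ entirely and $P$ lies there, we conclude $P \in \mathcal{T}_{\rho+1}$. Unrolling the definition of $\pr{\cdot}{\cdot}{\cdot}$ recursively yields a chain $T_1 \in \mathcal{B}_1^{(1)} \setminus \mathcal{C}$, $T_j \in \mathcal{T}_j$ for $j = 2, \ldots, \rho$, and $T_{\rho+1} := P$, in which every consecutive pair $T_{j-1}, T_j$ belongs to an $(m-\rho+1)$-dimensional $q'$-subgeometry $\mathcal{A}^{(j)} \subseteq V^{(j)}$ containing the whole of $\mathcal{B}_j^{(k_j)}$. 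Setting $Q_1 := T_1$ and $Q_j := T_{j-1}T_j \cap \tau_j$ for $j \geq 2$, the $q'$-subline of $\mathcal{A}^{(j)}$ joining $T_{j-1}$ to $T_j$ meets the ``hyperplane'' $\mathcal{A}^{(j)} \cap \tau_j = \mathcal{B}_j^{(k_j)}$ of $\mathcal{A}^{(j)}$ in a single subgeometric point, which must coincide with $Q_j$; hence $Q_j \in \mathcal{B}_j^{(k_j)} \subseteq A_j$. Since $T_j \in \vspanq{T_{j-1}, Q_j}$, an immediate induction gives $T_j \in \vspanq{Q_1, \ldots, Q_j}$ for every $j$, and the case $j = \rho+1$ is the desired $P \in \vspanq{Q_1, \ldots, Q_{\rho+1}}$.

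The main difficulty lies in the first step: tracking all dimensions and incidences to guarantee simultaneously that each $V^{(j)}$ has dimension $m-\rho+1$, that each $\Pi^{(j)}$ is a genuine hyperplane of $V^{(j)}$ distinct from $\tau_j$ and $\Pi^{(j-1)}$, and that the forward inclusion chain ultimately forces $P \in V^{(\rho+1)}$. Once the hyperplanes are available, the remainder--iterating Lemma~\ref{Lm_ProjectionAndShadowGetBigger}, unpacking the projection chain, and extracting each $Q_j$ via the subline-hyperplane intersection--follows mechanically from the tools already developed.
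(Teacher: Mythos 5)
Your argument is correct in its overall structure, but it runs in the \emph{opposite direction} from the paper's. The paper seeds the iteration at the zero-dimensional affine subspace $\mathcal{T}_0 := \{P\}$ inside $\pi_0 := \vspanq{\Sigma,P}$ and projects outward through $\tau_1,\ldots,\tau_\rho$ (with the recursion $\pi_j := \vspanq{\pi_{j-1},\tau_j}\cap\vspanq{\tau_{j+1},\ldots,\tau_{\rho+1}}$, which terminates at $\pi_\rho=\tau_{\rho+1}$); the last step is then a pigeonhole argument, namely that the $\rho$-dimensional affine hyperplane $\mathcal{T}_\rho$ inside $\tau_{\rho+1}$ must meet the union of $\rho+1$ concurrent, $\widetilde{\varphi}$-independent lines. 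You instead seed at the one-dimensional affine subspace $\mathcal{B}_1^{(1)}\setminus\HyperSub\subseteq\tau_1$ and project \emph{towards} $P$, ending when $\mathcal{T}_{\rho+1}$ fills $\vspanq{\Sigma,P}\setminus\Sigma$; this makes $P\in\mathcal{T}_{\rho+1}$ automatic and replaces the final pigeonhole step by an extra projection. Both routes use $\rho$ applications of the projection machinery, and both extract the points $Q_1,\ldots,Q_{\rho+1}$ by backtracking the projection chain in essentially the same way.

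The one technical issue you must address is that your final invocation of Lemma~\ref{Lm_ProjectionAndShadowGetBigger} has $j=\rho+1$ ($\rho+1$ independent elements of $\llinesSub^{\tau_{\rho+1}}$ acting on the $\rho$-dimensional affine subspace $\mathcal{T}_\rho$), whereas that lemma is stated only for $j\in\{1,\ldots,\rho\}$. The paper never leaves this range because its first projection step consumes the single element $\Sub_1^{(1)}\setminus\HyperSub$, while you instead use that element as your initial $\mathcal{T}_1$, shifting all indices up by one. The proof of Lemma~\ref{Lm_ProjectionAndShadowGetBigger} does in fact go through verbatim for $j=\rho+1$ --- the $\rho+1$ projected, independent lines through $F'\in\mathcal{T}$ cannot all lie in the $\rho$-dimensional affine subspace $\mathcal{T}$, so one meets $\mathcal{T}$ only in $F'$, and the resulting shadow is the full $(\rho+1)$-dimensional affine $\widetilde{\varphi}$-subspace, a notion Definition~\ref{Def_Notions} explicitly allows since $d$ ranges up to $\rho+1$ --- but you should say this rather than cite the lemma as if $j=\rho+1$ were already covered. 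The dimension bookkeeping you flag as ``the main difficulty'' is sound: the closed-form $\Pi^{(j)}=\vspanq{\tau_1,\ldots,\tau_j}\cap\vspanq{\tau_{j+1},\ldots,\tau_{\rho+1},P}$ follows from your recursion and, together with the flower independence and the hypothesis on $P$, gives $\dim V^{(j)}=m-\rho+1$, the pairwise distinctness of $\tau_j,\Pi^{(j-1)},\Pi^{(j)}$, and $V^{(\rho+1)}=\vspanq{\tau_{\rho+1},P}$.
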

    \begin{proof}
        Let $P$ be a point not contained in the span of any $\rho$ petals of $\Flower$.
        Define, for every $j\in\{1,2,\dots,\rho+1\}$, $\Pi_j:=\vspan{\tau_j,\tau_{j+1},\dots,\tau_{\rho+1}}$.
        Note that, by definition of a flower (Definition \ref{Def_Flower}), $\Pi_1$ is equal to the whole space $\pg(m,q)$, hence $P\in\Pi_1$.
        Furthermore, consider the $(m-\rho)$-space $\pi_0:=\vspan{\Sigma,P}$ and the point set $\mathcal{T}_0:=\{P\}$.
        One can now iterate through the following process, for $j$ going from $1$ to $\rho$.
        \begin{enumerate}
            \item Observe that $\pi_{j-1}$ and $\tau_j$ are distinct $(m-\rho)$-subspaces through $\Sigma$, contained in $\Pi_j$ but not contained in $\Pi_{j+1}$.
            As the latter is a hyperplane of $\Pi_j$, $\vspan{\pi_{j-1},\tau_j}$ intersects $\Pi_{j+1}$ in an $(m-\rho)$-space $\pi_j:=\vspan{\pi_{j-1},\tau_j}\cap\Pi_{j+1}$.
            \item Observe that $\pi_{j-1}$, $\tau_j$ and $\pi_j$ are three distinct $(m-\rho)$-spaces through $\Sigma$, spanning an $(m-\rho+1)$-space.
            Furthermore, $\mathcal{T}_{j-1}\subseteq\ppointsSub^{\pi_{j-1}}$ is a $(j-1)$-dimensional affine $\restr{\widetilde{\varphi}}{\pi_{j-1}}$-subspace.
            By Lemma \ref{Lm_ProjectionAndShadowGetBigger}, there exists a $\Sub_j^{(k)}\setminus\HyperSub$ in $\llinesSub^{\tau_j}$ such that $\mathcal{T}_j:=\prArg{\tau_j}{\pi_j}{\mathcal{T}_{j-1}}{\Sub_j^{(k)}\setminus\HyperSub}$ is a $j$-dimensional affine $\restr{\widetilde{\varphi}}{\pi_j}$-subspace.
            \item Note that any point $T_j\in\mathcal{T}_j$ lies in the span of a point of $\SubSat\cap\tau_j$ and a point of $\mathcal{T}_{j-1}$.
            Indeed, by definition of $\pr{\tau_j}{\pi_j}{\mathcal{T}_{j-1}}$, there has to exist a point $T_j'\in\mathcal{T}_{j-1}$ such that $T_j\in\prArg{\tau_j}{\pi_j}{T_j'}{\Sub_j^{(k)}\setminus\HyperSub}$.
            Hence, there exists a point of $\Sub_j^{(k)}\setminus\HyperSub$ that is projected from $T_j'$ onto $T_j$.
        \end{enumerate}
        Eventually, $\mathcal{T}_\rho\subseteq\ppointsSub^{\pi_\rho}$ is a $\rho$-dimensional affine $\restr{\widetilde{\varphi}}{\pi_\rho}$-subspace.
        In other words, the images of all points in $\mathcal{T}_\rho$ under $\restr{\widetilde{\varphi}}{\pi_\rho}$ form a hyperplane of $\ag\big(\rho+1,(q')^{m-\rho}\big)$.
        Furthermore, note that $\mathcal{T}_\rho\subseteq\pi_\rho\subseteq\Pi_{\rho+1}=\tau_{\rho+1}$.
        As $\SubSat\cap\tau_{\rho+1}$ is a union of $\rho+1$ concurrent, $\restr{\widetilde{\varphi}}{\tau_{\rho+1}}$-independent lines, there has to exist a point $Q_{\rho+1}\in\mathcal{T}_\rho\cap\SubSat\cap\tau_{\rho+1}$, since any union of $\rho+1$ concurrent, independent lines of $\ag\big(\rho+1,(q')^{m-\rho}\big)$ meets any hyperplane in at least one point.
        
        By recursively backtracking the observation obtained in step $3.$, we conclude that $Q_{\rho+1}$ lies in $\vspan{Q_\rho,Q_{\rho-1},\dots,Q_1,P}$, with $Q_j\in\SubSat\cap\tau_j$ ($j\in\{1,2,\dots,\rho+1\}$).
        This implies that $P\in\vspan{Q_1,Q_2,\dots,Q_{\rho+1}}$, as no point of $\{Q_1,Q_2,\dots,Q_{\rho+1}\}$ can lie in the span of the others (else $\Flower$ would not be an $(m-\rho)$-flower).
    \end{proof}
    
    By the lemma above, we can find a relatively small point set that $\rho$-saturates `most' of the points of $\pg(\n,q)$.
    We could end our quest right here and now, by copying smaller versions of similar point sets in the span of any $\rho$ petals of $\Flower$.
    However, as this would dramatically increase the size of the saturating set, we need to optimise the construction.
    Hence, to compensate for the restricted $\rho$-saturating capabilities described by the lemma above, we construct a $\rho$-saturating set as a mix of several flowers.
    To do this, we fix following notation and definition.
	
	\begin{nt}
	    Define the value $\lambda:=\min\{\rho,\n-\rho\}$.
	\end{nt}
	
	\begin{df}\label{Def_LowerHigherMaps}
	    Let $0<\rho<\n$.
	    For every $i\in\{1,\dots,\lambda\}$, define a map
	    \begin{align*}
	        \fn{\cdot}{i}&:\{\rho+2-\lambda,\dots,\rho+1\}\rightarrow\{\rho+2-\lambda,\dots,\rho+1\}\\
	        &:j\mapsto\fn{j}{i}:=\begin{cases}j+i-1&\quad\textnormal{if }j+i-1\leq\rho+1\textnormal{,}\\\rho+2-i&\quad\textnormal{otherwise.}\end{cases}
	    \end{align*}
    \end{df}
    
    As the map above could induce some confusion, we will give the reader an intuition of Construction \ref{Constr_Subgeometric} (see below) before plunging into the technical details.
    
    As said before, the main construction will be built by making use of a mix of multiple flowers.
    These flowers will be stacked upon each other, forming a total of $\lambda$ `layers', in the sense that
    \begin{itemize}
        \item the `largest' layer (layer $i=1$) is an $(\n-\rho)$-flower with $\rho+1$ petals; the petals will be numbered $1,2,\dots,\rho+1$,
        \item within that layer, we consider an $(\n-\rho-1)$-flower with $\rho+1$ petals (layer $i=2$), such that each of the petals is contained in a unique petal of the layer above,
        \item within that layer, we consider an $(\n-\rho-2)$-flower with $\rho+1$ petals (layer $i=3$), such that each of the petals is contained in a unique petal of the layer above,
        \item ...
        \item the `smallest' layer (layer $i=\lambda$) is an $(\n-\rho-\lambda+1)$-flower with $\rho+1$ petals, such that each of the petals is contained in a unique petal of the layer above.
    \end{itemize}
    In this way, we obtain a large flower consisting of `layered' petals numbered $1,2,\dots,\rho+1$.
    
    Inspired by Lemma \ref{Lm_FlowerSaturates}, we now choose a set of concurrent, $\widetilde{\varphi}$-independent $q'$-subgeometries in certain layers of each of the petals.
    The number of such subgeometries will depend on the number of the layer ($i$) and the number of the petal ($j$).
    If $j\leq\rho+1-\lambda$, we will choose $j$ concurrent, $\widetilde{\varphi}$-independent $q'$-subgeometries in the top layer ($i=1$) of petal $j$, and none in any of its other layers.
    If $j>\rho+1-\lambda$, then the value $\fn{j}{i}$ equals the number of concurrent, $\widetilde{\varphi}$-independent $q'$-subgeometries we will choose in layer $i$ of petal $j$.
    To elaborate, if $j>\rho+1-\lambda$, we will choose
    \begin{itemize}
        \item precisely $\fn{j}{1}=j$ concurrent, $\widetilde{\varphi}$-independent $q'$-subgeometries in the top layer of petal $j$,
        \item precisely $\fn{j}{2}=j+1$ concurrent, $\widetilde{\varphi}$-independent $q'$-subgeometries in the next layer ($i=2$) of petal $j$,
        \item ...
        \item precisely $\fn{j}{\rho+2-j}=\rho+1$ concurrent, $\widetilde{\varphi}$-independent $q'$-subgeometries in the next layer ($i=\rho+2-j$) of petal $j$,
        \item precisely $\fn{j}{\rho+3-j}=j-1$ concurrent, $\widetilde{\varphi}$-independent $q'$-subgeometries in the next layer ($i=\rho+3-j$) of petal $j$,
        \item ...
        \item precisely $\fn{j}{\lambda}=\rho+2-\lambda$ concurrent, $\widetilde{\varphi}$-independent $q'$-subgeometries in the bottom layer ($i=\lambda$) of petal $j$.
    \end{itemize}
    
    We now formalise the intuitive construction above and hence introduce the main construction of this article.
    Be sure to keep Figure \ref{Fig_ExampleOfConstruction} at hand for a visualisation of an example case with three two-layered petals.
	
	\begin{constr}\label{Constr_Subgeometric}
	    Let $0<\rho<\n$ and let $q=(q')^{\rho+1}$.
	    Suppose $\{\HyperSub_1,\dots,\HyperSub_\lambda\}$ is a set of $q'$-subgeometries and suppose $\{\Sigma_1,\dots,\Sigma_\lambda\}$ is a set of subspaces of $\pg(\n,q)$ with the following properties.
	    \begin{itemize}
	        \item For every $i\in\{1,\dots,\lambda\}$, $\HyperSub_i$ is an $(\n-\rho-i)$-dimensional $q'$-subgeometry such that $\Sigma_i=\vspanq{\HyperSub_i}$.
	        \item $\HyperSub_1\supseteq\HyperSub_2\supseteq\dots\supseteq\HyperSub_\lambda$, hence $\Sigma_1\supseteq\Sigma_2\supseteq\dots\supseteq\Sigma_\lambda$.
	    \end{itemize}
	    Moreover, consider a set of flowers $\{\Flower_1,\dots,\Flower_\lambda\}$ with the following properties.
	    \begin{itemize}
	        \item For every $i\in\{1,\dots,\lambda\}$, $\Flower_i:=\left\{\tau_{i1},\dots,\tau_{i(\rho+1)}\right\}$ is an $(\n-\rho-i+1)$-flower with pistil $\Sigma_i$.
	        \item For every $j\in\{1,\dots,\rho+1\}$, $\tau_{1j}\supseteq\tau_{2j}\supseteq\dots\supseteq\tau_{\lambda j}$.
	    \end{itemize}
	    Now define, for every $j\in\{1,\dots,\rho+1\}$,
	    \[
	        \ppi_j:=\begin{cases}
	            \bigcup_{k=1}^j\left(\Sub_{1j}^{(k)}\setminus\HyperSub_1\right)&\textnormal{if }j\leq\rho+1-\lambda\textnormal{,}\\
	            \bigcup_{i=1}^\lambda\bigcup_{k=1}^{\fn{j}{i}}\left(\Sub_{ij}^{(k)}\setminus\HyperSub_i\right)&\textnormal{if }j>\rho+1-\lambda\textnormal{,}
	        \end{cases}
	    \]
	    where $\Sub_{ij}^{(1)}\setminus\HyperSub_i,\Sub_{ij}^{(2)}\setminus\HyperSub_i,\dots,\Sub_{ij}^{\left(\fn{j}{i}\right)}\setminus\HyperSub_i$ are $\fn{j}{i}$ distinct, $\restr{\widetilde{\varphi}}{\tau_{ij}}$-independent elements of $\llinesSubIndex{i}^{\tau_{ij}}$, all sharing a point $F_{ij}\in\ppointsSubIndex{i}^{\tau_{ij}}\setminus\tau_{(i+1)j}$ ($i\in\{1,\dots,\lambda\}$, $\tau_{(\lambda+1)j}:=\emptyset$).
	    
	    Finally, define
	    \[
	        \ppi_1':=\begin{cases}
	            \bigcup_{i=2}^\lambda\left(\Sub_{i1}'\setminus\HyperSub_i\right)&\textnormal{if }q'=2\textnormal{,}\\
	            \emptyset&\textnormal{if }q'\neq2\textnormal{,}
	        \end{cases}
	    \]
	    with $\Sub_{i1}'\setminus\HyperSub_i$ an element of $\llinesSubIndex{i}^{\tau_{i1}}$ with the property that $\vspanq{\Sub_{i1}'}$ intersects $\Sub_{(i-1)1}'$ only in $\HyperSub_i$ ($i\in\{2,\dots,\lambda\}$, $\Sub_{11}':=\Sub_{11}^{(1)}$).
	\end{constr}
	
	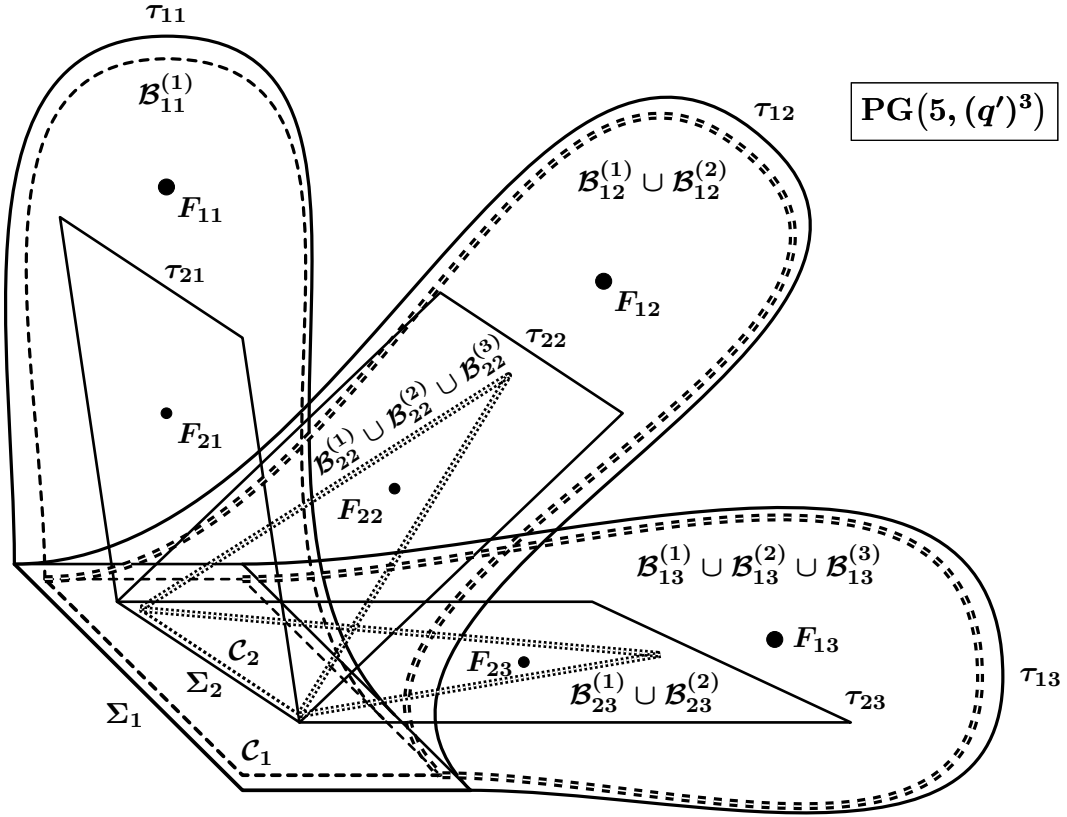
\begin{figure}
        \begin{center}\begin{tikzpicture}
            
            \node[draw,fill=none,anchor=west] at (4,4) {\large\textbf{PG}$\boldsymbol{\big(5,(q')^3\big)}$};
			\node[draw,fill=none,anchor=west] at (4,4) {\large\textbf{PG}$\boldsymbol{\big(5,(q')^3\big)}$}; 
			
			\draw[line width=1pt, line join=round, line cap=round] (-7,-2) -- (-4,-2) -- (-1,-5);
			\draw[line width=1.4pt, line join=round, line cap=round] (-1,-5) -- (-4,-5) -- (-7,-2);
			\node[draw=none, fill=none] at (-5.55,-4) {$\boldsymbol{\Sigma_1}$};
			\node[draw=none, fill=none] at (-4.5,-3.6) {$\boldsymbol{\Sigma_2}$};
			
			\draw[line width=1pt, dashed, line join=round, line cap=round] (-6.6,-2.2) -- (-4,-2.2) -- (-1.4,-4.8);
			\draw[line width=1.4pt, dashed, line join=round, line cap=round] (-1.4,-4.8) -- (-4,-4.8) -- (-6.6,-2.2);
			\node[draw=none, fill=none] at (-3.8,-4.5) {$\boldsymbol{\HyperSub_1}$};
			
			\draw[line width=1.2pt, densely dotted, line join=round] (-5.35,-2.6) -- (-3.25,-4);
			\node[draw=none, fill=none] at (-3.97,-3.17) {$\boldsymbol{\HyperSub_2}$};
			
			\draw[line width=1.2pt, line join=round, line cap=round] (-7,-2) to [out=90, in=180] (-5,5) to [out=0, in=135] (-2,-4);
			\node[draw=none, fill=none] at (-5,5.3) {$\boldsymbol{\tau_{11}}$};
			
			\draw[fill=black] (-5,3) circle (3pt);
			\node[draw=none, fill=none, anchor=north west] at (-5,3) {$\boldsymbol{F_{11}}$};
			
			\draw[line width=1.2pt, dashed, line join=round, line cap=round] (-6.6,-2.2) to [out=90, in=180] (-5,4.7) to [out=0, in=135] (-2.15,-4.05);
			\node[draw=none, fill=none] at (-5,4.25) {$\boldsymbol{\Sub_{11}^{(1)}}$};
			
			\draw[line width=1pt, line join=round, line cap=round] (-5.65,-2.5) -- (-3.25,-4.1) -- (-4,1) -- (-6.4,2.6) -- cycle;
			\node[draw=none, fill=none] at (-4.75,1.85) {$\boldsymbol{\tau_{21}}$};
			
			\draw[fill=black] (-5,0) circle (2pt);
			\node[draw=none, fill=none, anchor=north west] at (-5,0) {$\boldsymbol{F_{21}}$};
			
			\draw[line width=1.2pt, line join=round, line cap=round] (-7,-2) to [out=0, in=135] (3, 3.5) to [out=315, in=135] (-1,-5);
			\node[draw=none, fill=none] at (3,4) {$\boldsymbol{\tau_{12}}$};
			
			\draw[fill=black] (0.75,1.75) circle (3pt);
			\node[draw=none, fill=none, anchor=north west] at (0.75,1.75) {$\boldsymbol{F_{12}}$};
			
			\draw[line width=1.2pt, double, dashed, line join=round] (-6.6,-2.2) to [out=0, in=135] (2.8, 3.3) to [out=315, in=135] (-1.4,-4.8);
			\node[draw=none, fill=none] at (1.4,3.1) {$\boldsymbol{\Sub_{12}^{(1)}\cup\Sub_{12}^{(2)}}$};
			
			\draw[line width=1pt, line join=round, line cap=round] (-3.25,-4.1) -- (1,0) -- (-1.4,1.6) -- (-5.65,-2.5);
			\node[draw=none, fill=none] at (0,1) {$\boldsymbol{\tau_{22}}$};
			
			\draw[line width=1pt, double, densely dotted, line join=round] (-3.25,-4) -- (-0.5,0.5) -- (-5.35,-2.6);
			\node[draw=none, fill=none, rotate around={33:(0,0)}] at (-1.8,0.1) {\small$\boldsymbol{\Sub_{22}^{(1)}\cup\Sub_{22}^{(2)}\cup\Sub_{22}^{(3)}}$};
			
			\draw[fill=black] (-2,-1) circle (2pt);
			\node[draw=none, fill=none, anchor=north east] at (-2,-1) {$\boldsymbol{F_{22}}$};
			
			\draw[line width=1.2pt, line join=round, line cap=round] (-4,-2) to [out=0, in=90] (6,-3.5) to [out=270, in=0] (-1,-5);
			\node[draw=none, fill=none] at (6.5,-3.5) {$\boldsymbol{\tau_{13}}$};
			
			\draw[fill=black] (3,-3) circle (3pt);
			\node[draw=none, fill=none, anchor=west] at (3.1,-3) {$\boldsymbol{F_{13}}$};
			
			\draw[line width=1.2pt, double, dashed, line join=round] (-4,-2.2) to [out=0, in=90] (5.7,-3.5) to [out=270, in=0] (-1.4,-4.8);
			\node[draw=none, fill=none] at (2.8,-2) {$\boldsymbol{\Sub_{13}^{(1)}\cup\Sub_{13}^{(2)}\cup\Sub_{13}^{(3)}}$};
			
			\draw[line width=1pt, line join=round, line cap=round] (-3.25,-4.1) -- (4,-4.1) -- (0.6,-2.5) -- (-5.65,-2.5);
			\node[draw=none, fill=none] at (4.2,-3.8) {$\boldsymbol{\tau_{23}}$};
			
			\draw[line width=1pt, double, densely dotted, line join=round] (-3.25,-4) -- (1.5,-3.2) -- (-5.35,-2.6);
			\node[draw=none, fill=none] at (1.3,-3.7) {$\boldsymbol{\Sub_{23}^{(1)}\cup\Sub_{23}^{(2)}}$};
			
			\draw[fill=black] (-0.3,-3.3) circle (2pt);
			\node[draw=none, fill=none, anchor=east] at (-0.3,-3.32) {$\boldsymbol{F_{23}}$};
			
        \end{tikzpicture}\end{center}
        \caption{A visualisation of Construction \ref{Constr_Subgeometric} in case $\n=5$ and $\rho=2$; we observe two stacked flowers, resulting in three two-layered petals.
        The petal $\tau_{11}$ has a number $j=1$ not exceeding $\rho+1-\lambda=1$.
        The petals with number $j=2$ correspond to $\fn{2}{1}=2$ chosen $q'$-subgeometries in the top layer and $\fn{2}{2}=3$ chosen $q'$-subgeometries in the bottom layer (increasing).
        The petals with number $j=3$ correspond to $\fn{3}{1}=3$ chosen $q'$-subgeometries in the top layer and $\fn{3}{2}=2$ chosen $q'$-subgeometries in the bottom layer (decreasing).}
        \label{Fig_ExampleOfConstruction}
	\end{figure}
	
	\begin{lm}\label{Lm_SizePii1}
	    Consider Construction \ref{Constr_Subgeometric}.
	    Then
	    \[
	        |\ppi_1'|=\begin{cases}(2^{\lambda-1}-1)\cdot2^{\n-\rho-\lambda+1}&\textnormal{if }q'=2\textnormal{,}\\
	        0&\textnormal{if }q'\neq2\textnormal{.}\end{cases}
	    \]
	\end{lm}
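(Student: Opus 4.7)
The case $q'\neq 2$ is immediate from the definition of $\ppi_1'$, which is empty by fiat. So I would focus entirely on the case $q'=2$, where $\ppi_1'=\bigcup_{i=2}^\lambda(\Sub_{i1}'\setminus\HyperSub_i)$.

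The plan is to show (a) that each individual set $\Sub_{i1}'\setminus\HyperSub_i$ has size $2^{\n-\rho-i+1}$, and (b) that the sets appearing in the union are pairwise disjoint, so that $|\ppi_1'|$ equals the geometric sum $\sum_{i=2}^\lambda 2^{\n-\rho-i+1}=(2^{\lambda-1}-1)\cdot 2^{\n-\rho-\lambda+1}$.

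For (a), I would observe that $\Sub_{i1}'$ is, by its definition as an element of $\llinesSubIndex{i}^{\tau_{i1}}$, an $(\n-\rho-i+1)$-dimensional $q'$-subgeometry containing the $(\n-\rho-i)$-dimensional subgeometry $\HyperSub_i$. With $q'=2$, counting points gives $|\Sub_{i1}'|=2^{\n-\rho-i+2}-1$ and $|\HyperSub_i|=2^{\n-\rho-i+1}-1$, whose difference is exactly $2^{\n-\rho-i+1}$. This is straightforward.

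For (b), this is the main obstacle and it relies on the constraint built into the definition: $\vspanq{\Sub_{i1}'}$ meets $\Sub_{(i-1)1}'$ only in $\HyperSub_i$. Note that $\vspanq{\Sub_{i1}'}=\tau_{i1}$ because $\Sub_{i1}'$ spans $\tau_{i1}$ by virtue of being an element of $\llinesSubIndex{i}^{\tau_{i1}}$. Together with the nestings $\tau_{11}\supseteq\tau_{21}\supseteq\dots\supseteq\tau_{\lambda 1}$ and $\HyperSub_1\supseteq\HyperSub_2\supseteq\dots\supseteq\HyperSub_\lambda$ from the hypothesis, I would argue for arbitrary $i<j$ that
\[
\Sub_{j1}'\cap\Sub_{i1}'\subseteq\tau_{j1}\cap\Sub_{i1}'\subseteq\tau_{(i+1)1}\cap\Sub_{i1}'=\HyperSub_{i+1}\subseteq\HyperSub_i.
\]
Consequently $(\Sub_{j1}'\setminus\HyperSub_j)\cap(\Sub_{i1}'\setminus\HyperSub_i)\subseteq\HyperSub_i\setminus\HyperSub_i=\emptyset$, giving pairwise disjointness.

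Having established (a) and (b), the final computation is a rewriting of the geometric series: $\sum_{i=2}^\lambda 2^{\n-\rho-i+1}=2^{\n-\rho-\lambda+1}(1+2+\dots+2^{\lambda-2})=(2^{\lambda-1}-1)\cdot 2^{\n-\rho-\lambda+1}$, which matches the claimed formula. The only delicate point is making sure the chain-of-inclusions argument for disjointness is valid; everything else is bookkeeping.
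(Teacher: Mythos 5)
Your proof is correct. The paper states Lemma \ref{Lm_SizePii1} without any proof (it proceeds directly to Lemma \ref{Lm_SizePiij}), so there is nothing to compare against; your argument supplies a valid justification, and it is almost certainly the one the author had in mind. The two key observations are exactly right: (a) since $\Sub_{i1}'\setminus\HyperSub_i$ is an element of $\llinesSubIndex{i}^{\tau_{i1}}$, the set $\Sub_{i1}'$ is an $(\n-\rho-i+1)$-dimensional $q'$-subgeometry containing $\HyperSub_i$, so over $\FF_2$ its affine part has size $2^{\n-\rho-i+1}$; and (b) the constraint $\vspanq{\Sub_{i1}'}\cap\Sub_{(i-1)1}'=\HyperSub_i$, combined with $\vspanq{\Sub_{i1}'}=\tau_{i1}$ and the nesting $\tau_{11}\supseteq\tau_{21}\supseteq\dots$, forces pairwise disjointness of the sets $\Sub_{i1}'\setminus\HyperSub_i$. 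Note also that your chain argument gracefully covers the boundary case $\lambda=1$, where the union is empty and the closed-form $(2^{\lambda-1}-1)\cdot 2^{\n-\rho-\lambda+1}$ evaluates to $0$, as it should.
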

	
	\begin{lm}\label{Lm_SizePiij}
	    Consider Construction \ref{Constr_Subgeometric}.
	    Let $j\in\{1,\dots,\rho+1\}$.
	    If $j\leq\rho+1-\lambda$, then
	    \[
	        |\ppi_j|=j(q')^{\n-\rho}-(j-1)\textnormal{.}
	    \]
	    If $j>\rho+1-\lambda$, then one can find a set $\ppi_j$ such that
	    \[
	        |\ppi_j|=j(q')^{\n-\rho}+\sum_{k=1}^{\rho+1-j}(j-1+k)(q')^{\n-\rho-k}+\sum_{k=\rho+2-j}^{\lambda-1}(\rho-k)(q')^{\n-\rho-k}-\frac{\lambda(2\rho-\lambda+1)}{2}\textnormal{.}
	    \]
	\end{lm}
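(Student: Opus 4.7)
For the easier case $j \leq \rho+1-\lambda$, only layer $1$ is active, so $\ppi_j$ is the union of $j$ lines of $\llinesSubIndex{1}^{\tau_{1j}}$ through the common point $F_{1j}$, each of size $(q')^{\n-\rho}$ (the difference of the point counts of an $(\n-\rho)$- and an $(\n-\rho-1)$-dimensional $q'$-subgeometry). Theorem~\ref{Thm_SubgeometriesAreAffineLines} translates the $\restr{\widetilde{\varphi}}{\tau_{1j}}$-independence of these lines into $j$ concurrent, independent affine lines in $T^*(\AffineHyperSub_{\rho,\n-\rho,q'})$, which meet pairwise only at $F_{1j}$. Inclusion--exclusion then yields $|\ppi_j| = j(q')^{\n-\rho} - (j-1)$.

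For the harder case $j > \rho+1-\lambda$, the key idea is to realise the configuration of Construction~\ref{Constr_Subgeometric} while forcing a \emph{nested chain}
\[
    \Sub_{1j}^{(1)} \supseteq \Sub_{2j}^{(1)} \supseteq \dots \supseteq \Sub_{\lambda j}^{(1)},
\]
each $\Sub_{ij}^{(1)}$ being a codimension-one $q'$-sub-subgeometry of $\Sub_{(i-1)j}^{(1)}$ that spans $\tau_{ij}$. I would build the chain inductively from the innermost layer outward: start with an arbitrary $\Sub_{\lambda j}^{(1)}$ through $\HyperSub_\lambda$, and for each $i=\lambda-1,\dots,1$ pick a $q'$-subline $\mathfrak{L}_i \subseteq \Sub_{(i+1)j}^{(1)}$ meeting $\HyperSub_{i+1}$ with spanning line not contained in $\Sigma_{i+1}$, then invoke Lemma~\ref{Lm_UniqueSubThroughHyperSubAndLine} to obtain a unique $(\n-\rho-i+1)$-dimensional $q'$-subgeometry $\Sub_{ij}^{(1)}$ through $\HyperSub_i$ and $\mathfrak{L}_i$; since $\Sub_{(i+1)j}^{(1)}$ is already the unique such subgeometry through $\HyperSub_{i+1}$ and $\mathfrak{L}_i$ inside $\vspanq{\Sigma_{i+1},\mathfrak{L}_i}=\tau_{(i+1)j}$, it is automatically contained in $\Sub_{ij}^{(1)}$. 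The points $F_{ij}$ are placed inside $\Sub_{(i-1)j}^{(1)} \cap \tau_{ij}$ outside $\Sigma_i \cup \tau_{(i+1)j}$, and the remaining lines $\Sub_{ij}^{(k)}$ ($k\geq 2$) are chosen generically through $F_{ij}$ as $\restr{\widetilde{\varphi}}{\tau_{ij}}$-independent elements of $\llinesSubIndex{i}^{\tau_{ij}}$ that avoid every earlier-layer line; the flexibility for this is provided by Theorem~\ref{Thm_SubgeometriesAreAffineLines}, which exhibits those lines as a full pencil in an affine space.

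Writing $L_i := \bigcup_{k=1}^{\fn{j}{i}}(\Sub_{ij}^{(k)} \setminus \HyperSub_i)$, within-layer independence gives $|L_i| = \fn{j}{i}(q')^{\n-\rho-i+1}-(\fn{j}{i}-1)$, while the nested chain combined with $\Sigma_{i-1}\cap\tau_{ij}=\Sigma_i$ (so that $\Sub_{(i-1)j}^{(1)}\cap\HyperSub_{i-1}\cap\tau_{ij}=\HyperSub_i$) forces $L_i \cap (L_1\cup\dots\cup L_{i-1}) = \Sub_{ij}^{(1)}\setminus\HyperSub_i$ for every $i\geq2$. Peeling off the layers yields
\[
    |\ppi_j| = \sum_{i=1}^{\lambda}\fn{j}{i}(q')^{\n-\rho-i+1} - \sum_{i=2}^{\lambda}(q')^{\n-\rho-i+1} - \sum_{i=1}^{\lambda}(\fn{j}{i}-1).
\]
Splitting the first sum at $i=\rho+2-j$ (where $\fn{j}{i}$ switches from $j+i-1$ to $\rho+2-i$) and substituting $k=i-1$ reproduces the two explicit sums in the statement, while the identity $\sum_{i=1}^\lambda(\fn{j}{i}-1)=\frac{\lambda(2\rho-\lambda+1)}{2}$ follows because $\{\fn{j}{i}:1\leq i\leq\lambda\}$ is a permutation of $\{\rho+2-\lambda,\dots,\rho+1\}$. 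The main delicate point is verifying that the nested chain is genuinely realisable subject to all flower-nesting constraints and that the generic choice of the $\Sub_{ij}^{(k)}$ ($k\geq 2$) really avoids all unintended cross-layer overlap: a generic projective hyperplane of $\tau_{(i-1)j}$ would intersect the $(\n-\rho-i+2)$-dimensional subgeometry $\Sub_{(i-1)j}^{(1)}$ in something far smaller than a codimension-one sub-subgeometry, so the inside-out construction via Lemma~\ref{Lm_UniqueSubThroughHyperSubAndLine} is indispensable, and the required genericity of the remaining lines rests on the affine-space model of Theorem~\ref{Thm_SubgeometriesAreAffineLines}.
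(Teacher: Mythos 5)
Your proposal is correct and takes essentially the same route as the paper: one counts $\ppi_j$ by layered inclusion--exclusion, after forcing the overlaps to be as small as possible by nesting $\Sub_{1j}^{(1)}\supseteq\Sub_{2j}^{(1)}\supseteq\dots\supseteq\Sub_{\lambda j}^{(1)}$ (the paper's one-line remark ``we can always choose $\Sub_{ij}^{(1)}$ to be a subspace of $\Sub_{(i-1)j}^{(1)}$'') and otherwise choosing the remaining lines to meet earlier layers only at the forced points. Your arithmetic is the same as the paper's explicit table, repackaged: your $\sum_{i=1}^\lambda\fn{j}{i}(q')^{\n-\rho-i+1}-\sum_{i=2}^\lambda(q')^{\n-\rho-i+1}-\sum_{i=1}^\lambda(\fn{j}{i}-1)$ matches the row-by-row contributions $j(q')^{\n-\rho}-(j-1)$, then $(\fn{j}{i}-1)(q')^{\n-\rho-i+1}-(\fn{j}{i}-1)$ for $i\geq 2$, and your permutation argument for the constant term $\frac{\lambda(2\rho-\lambda+1)}{2}$ is equivalent to the paper's direct summation.

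The one respect in which you go beyond the paper is worth noting. The paper simply asserts that the nested chain can be chosen and that no unintended cross-layer overlap occurs; you spell out why. Your inside-out construction via Lemma~\ref{Lm_UniqueSubThroughHyperSubAndLine} is sound: a subline $\mathfrak{L}_i\subseteq\Sub_{(i+1)j}^{(1)}$ through a point of $\HyperSub_{i+1}$ and a point of $\Sub_{(i+1)j}^{(1)}\setminus\Sigma_{i+1}$ has spanning line in $\tau_{(i+1)j}$ but not in $\Sigma_{i+1}$, hence (since $\tau_{(i+1)j}\cap\Sigma_i=\Sigma_{i+1}$) not in $\Sigma_i$; then $\vspanq{\Sigma_i,\mathfrak{L}_i}=\tau_{ij}$ so $\Sub_{ij}^{(1)}$ spans the right petal, and $\Sub_{ij}^{(1)}\cap\tau_{(i+1)j}$ is forced to equal $\Sub_{(i+1)j}^{(1)}$. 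Your identification of the forced overlap $L_i\cap(L_1\cup\dots\cup L_{i-1})=\Sub_{ij}^{(1)}\setminus\HyperSub_i$ is also correct: the containments come from the chain and from $\HyperSub_i\cap\tau_{(i+1)j}=\HyperSub_{i+1}$, while for $k\geq 2$ the line $\Sub_{ij}^{(k)}\setminus\HyperSub_i$ meets $\Sub_{ij}^{(1)}\setminus\HyperSub_i$ (hence the whole chain) only in $F_{ij}$. What remains is genericity of the $\Sub_{ij}^{(k)}$, $k\geq 2$, against the non-chain lines of earlier layers; you flag this as the ``main delicate point'' and appeal to the affine model, which is exactly the level of detail the paper itself leaves implicit. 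So your argument is, if anything, a more carefully justified version of the paper's.
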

	\begin{proof}
	    If $j\leq\rho+1-\lambda$, this result is easily obtained, as distinct elements of $\llinesSubIndex{1}^{\tau_{1j}}$ can share at most one point.
	    Hence, assume $j>\rho+1-\lambda$.
	    To minimalise the size of $\ppi_j$, we can always choose $\Sub_{ij}^{(1)}$ to be a subspace of $\Sub_{(i-1)j}^{(1)}$, for every $i\in\{2,\dots,\lambda\}$.
	    In this way, keeping the nature of $\fn{\cdot}{\cdot}$ in mind (see Definition \ref{Def_LowerHigherMaps}), we obtain the following:
	    \begin{align*}
	        |\ppi_j|&=j(q')^{\n-\rho}&&-(j-1)\\
	        &+j(q')^{\n-\rho-1}&&-j\\
	        &+(j+1)(q')^{\n-\rho-2}&&-(j+1)\\
	        &\;\;\vdots&&\;\;\vdots\\
	        &+\rho(q')^{\n-\rho-(\rho+1-j)}&&-\rho\\
	        &+(j-2)(q')^{\n-\rho-(\rho+2-j)}&&-(j-2)\\
	        &+(j-3)(q')^{\n-\rho-(\rho+3-j)}&&-(j-3)\\
	        &\;\;\vdots&&\;\;\vdots\\
	        &+(\rho+1-\lambda)(q')^{\n-\rho-(\lambda-1)}&&-(\rho+1-\lambda)\textnormal{.}
	    \end{align*}
	    Viewing the expression above as a polynomial in $q'$, the corresponding constant term equals
	    \begin{align*}
	        -\big((\rho+1-\lambda)+\dots+\rho\big)&=\frac{(\rho-\lambda)(\rho+1-\lambda)}{2}-\frac{\rho(\rho+1)}{2}\\
	        &=-\frac{\lambda(2\rho-\lambda+1)}{2}\textnormal{.}\qedhere
	    \end{align*}
	\end{proof}
	
	\begin{lm}\label{Lm_SizeSumPiij}
	    Consider Construction \ref{Constr_Subgeometric}.
	    Then we can find sets $\ppi_1,\dots,\ppi_{\rho+1}$ such that
	    \[
	        \sum_{i=1}^{\rho+1}|\ppi_i|=\frac{(\rho+1)(\rho+2)}{2}(q')^{\n-\rho}+\sum_{j=1}^{\lambda-1}a(\n,\rho,j)(q')^{\n-\rho-j}-c(\n,\rho)\textnormal{,}
	    \]
	    with
	    \[
	        a(\n,\rho,j):=\frac{\lambda(2\rho-\lambda+2j+1)-j(3j+1)}{2}\;\;\;\textnormal{and}\;\;\;c(\n,\rho):=\frac{\rho(\rho+1)+\lambda(\lambda-1)(2\rho-\lambda+1)}{2}\textnormal{.}
	    \]
	\end{lm}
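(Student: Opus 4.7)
The plan is to sum the sizes $|\ppi_j|$ over $j\in\{1,\dots,\rho+1\}$ by splitting according to the case distinction of Lemma \ref{Lm_SizePiij} and tracking the coefficient of each power of $q'$ separately. I would partition $\{1,\dots,\rho+1\}$ into the small block $J_1:=\{1,\dots,\rho+1-\lambda\}$ (governed by the first case of Lemma \ref{Lm_SizePiij}) and the large block $J_2:=\{\rho+2-\lambda,\dots,\rho+1\}$ (governed by the optimised second case). Then for each $k\in\{0,1,\dots,\lambda-1\}$ I collect the total coefficient of $(q')^{\n-\rho-k}$ and, separately, sum the constant contributions.

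The coefficient of the leading power $(q')^{\n-\rho}$ is immediate: every $j$ contributes $j$, so the total is $\sum_{j=1}^{\rho+1}j=\frac{(\rho+1)(\rho+2)}{2}$, matching the main term. For the constant, the block $J_1$ contributes $-\sum_{j=1}^{\rho+1-\lambda}(j-1)=-\frac{(\rho-\lambda)(\rho+1-\lambda)}{2}$ and each of the $\lambda$ elements of $J_2$ contributes $-\frac{\lambda(2\rho-\lambda+1)}{2}$; a short algebraic expansion will identify the sum of these contributions with $-c(\n,\rho)$.

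The core of the argument is the treatment of the middle powers $(q')^{\n-\rho-k}$ for $k\in\{1,\dots,\lambda-1\}$. Only the block $J_2$ contributes here, and within the formula from Lemma \ref{Lm_SizePiij} the first inner sum contributes a term $(j-1+k)$ precisely when $j\leq\rho+1-k$, while the second contributes $(\rho-k)$ precisely when $j\geq\rho+2-k$. The key combinatorial observation is that, since $1\leq k\leq\lambda-1$, these two conditions partition $J_2$ into the subranges $\{\rho+2-\lambda,\dots,\rho+1-k\}$ (of size $\lambda-k$) and $\{\rho+2-k,\dots,\rho+1\}$ (of size $k$). After recognising this partition, the coefficient of $(q')^{\n-\rho-k}$ becomes an arithmetic progression summed over $\lambda-k$ terms, plus a single term $k(\rho-k)$; applying the standard formula for arithmetic sums and simplifying will yield $a(\n,\rho,k)$. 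The main obstacle is therefore not analytic but purely bookkeeping: correctly identifying which inner sum of which $|\ppi_j|$ contributes to each exponent of $q'$, after which the remaining computation reduces to routine algebraic identities verifiable by direct expansion.
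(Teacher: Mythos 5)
Your proposal is correct and follows essentially the same route as the paper's proof: both track the coefficient of each power $(q')^{\n-\rho-k}$ by identifying which $\ppi_j$ contribute, observing that $J_1$ contributes only to the leading coefficient and the constant while, for $1\leq k\leq\lambda-1$, the indices in $J_2$ split into the sub-range $\{\rho+2-\lambda,\dots,\rho+1-k\}$ (yielding an arithmetic progression of $\lambda-k$ terms) and the sub-range $\{\rho+2-k,\dots,\rho+1\}$ (yielding $k(\rho-k)$). The paper simply writes these contributions out term by term (labelling which $\ppi_j$ they arise from), which is exactly the bookkeeping you describe.
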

	\begin{proof}
	    Let $\ppi_1,\dots,\ppi_{\rho+1}$ be sets of size equal to the values described in Lemma \ref{Lm_SizePiij}.
	    Interpret $\sum_{i=1}^{\rho+1}|\ppi_i|$ as a polynomial in $q'$ of degree $\n-\rho$; let $a(\n,\rho,j)$ be the coefficient corresponding to $(q')^{\n-\rho-j}$ ($j\in\{0,1,\dots,\n-\rho-1\}$) and let $-c(\n,\rho)$ be the constant term.
	    
	    It is clear that $a(\n,\rho,0)=\sum_{i=1}^{\rho+1}i=\frac{(\rho+1)(\rho+2)}{2}$.
	    Furthermore, we can deduce that
	    \begin{align*}
	        a(\n,\rho,1)&=\underbrace{(\rho+2-\lambda)+(\rho+3-\lambda)+\dots+(\rho)}_{\textnormal{arising from }\ppi_{\rho+2-\lambda},\ppi_{\rho+3-\lambda},\,\dots\,,\ppi_\rho}+\underbrace{(\rho-1)}_{\textnormal{arising from }\ppi_{\rho+1}}\\
	        &=\frac{\lambda\big(2\rho-\lambda+3\big)-4}{2}\textnormal{,}\\
	        a(\n,\rho,2)&=\underbrace{(\rho+3-\lambda)+(\rho+4-\lambda)+\dots+(\rho)}_{\textnormal{arising from }\ppi_{\rho+2-\lambda},\ppi_{\rho+3-\lambda},\,\dots\,,\ppi_{\rho-1}}+\underbrace{2(\rho-2)}_{\textnormal{arising from }\ppi_\rho\textnormal{ and }\ppi_{\rho+1}}\\
	        &=\frac{\lambda\big(2\rho-\lambda+5\big)-14}{2}\textnormal{,}\\
	        &\;\;\vdots\\
	        a(\n,\rho,j)&=\underbrace{(\rho+j+1-\lambda)+(\rho+j+2-\lambda)+\dots+(\rho)}_{\textnormal{arising from }\ppi_{\rho+2-\lambda},\ppi_{\rho+3-\lambda},\,\dots\,,\ppi_{\rho+1-j}}+\underbrace{j(\rho-j)}_{\textnormal{arising from }\ppi_{\rho+2-j},\,\dots\,,\ppi_{\rho+1}}\\
	        &=\frac{\lambda(2\rho-\lambda+2j+1)-j(3j+1)}{2}\textnormal{,}\\
	        &\;\;\vdots\\
	        a(\n,\rho,\lambda-1)&=\underbrace{(\rho)}_{\textnormal{arising from }\ppi_{\rho+2-\lambda}}+\underbrace{(\lambda-1)(\rho+1-\lambda)}_{\textnormal{arising from }\ppi_{\rho+3-\lambda},\,\dots\,,\ppi_{\rho+1}}\\
	        &=\rho\lambda-\lambda^2+2\lambda-1\textnormal{,}\\
	        a(\n,\rho,\lambda)&=\dots=a(\n,\rho,\n-\rho-1)=0\textnormal{,}\\
	        -c(\n,\rho)&=\underbrace{-1-2-3-\dots-(\rho-\lambda)}_{\textnormal{arising from }\ppi_2,\ppi_3,\,\dots\,,\ppi_{\rho+1-\lambda}}+\underbrace{\lambda\left(-\frac{\lambda(2\rho-\lambda+1)}{2}\right)}_{\textnormal{arising from }\ppi_{\rho+2-\lambda},\ppi_{\rho+3-\lambda},\,\dots\,,\ppi_{\rho+1}}\\
	        &=-\frac{\rho(\rho+1)+\lambda(\lambda-1)(2\rho-\lambda+1)}{2}\textnormal{.}\qedhere
	    \end{align*}
	\end{proof}
	
	\begin{lm}\label{Lm_SubgeometricIsSaturating}
	    Consider Construction \ref{Constr_Subgeometric}.
	    Then the point set
	    \[
	        \SubSat_{(\n,\rho)}:=\ppi_1'\cup\bigcup_{j=1}^{\rho+1}\ppi_j
	    \]
	    $\rho$-saturates all points of $\pg(\n,q)\setminus\Sigma_1$.
	\end{lm}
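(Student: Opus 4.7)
The plan is to generalise the projection/shadow argument of Lemma \ref{Lm_FlowerSaturates} from a single flower to the $\lambda$-layered construction. Fix $P \in \pg(\n,q) \setminus \Sigma_1$. The proof of Lemma \ref{Lm_FlowerSaturates} shows that the top flower $\Flower_1$ together with its layer-1 subgeometries $\bigcup_{j,k}\Sub_{1j}^{(k)} \setminus \HyperSub_1$ (where $k$ runs up to $\fn{j}{1} = j$) already saturates every point not lying in the span of any $\rho$ petals of $\Flower_1$; this handles the ``generic'' case and requires no further work.

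The core of the argument is therefore the case in which $P$ lies in the span of some $\rho$ petals of $\Flower_1$. In that case, I would introduce an invariant $\hat j = \hat j(P) \in \{1,\dots,\rho\}$ defined as the smallest integer such that $P$ lies in the span of some $\hat j$ petals of $\Flower_1$, and a companion invariant describing how deeply into the nested flowers $\Flower_1 \supseteq \Flower_2 \supseteq \dots \supseteq \Flower_\lambda$ the point $P$ descends. The key geometric fact exploited is the nesting $\tau_{1j} \supseteq \tau_{2j} \supseteq \dots \supseteq \tau_{\lambda j}$ and $\Sigma_1 \supseteq \Sigma_2 \supseteq \dots \supseteq \Sigma_\lambda$, which lets me re-run the iterative projection procedure from Lemma \ref{Lm_FlowerSaturates} but choosing, for each index $j \in \{1,\dots,\rho+1\}$, the appropriate layer $i=i(j)$ from which to pull the subgeometries $\Sub_{ij}^{(k)} \setminus \HyperSub_i$. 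At the $j$-th projection step the accumulated shadow is a $j$-dimensional affine $\widetilde\varphi$-subspace, so by Lemma \ref{Lm_ProjectionAndShadowGetBigger} I need $\fn{j}{i(j)}$ concurrent, $\widetilde\varphi$-independent subgeometries in the chosen petal; the definition of $\fn{j}{i}$ is engineered precisely so that, whatever value of $\hat j$ occurs, a valid assignment $j \mapsto i(j)$ exists.

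The main obstacle is the bookkeeping in this last step: I must verify that for every possible ``position'' of $P$ (encoded by $\hat j$ and by which particular $\rho$-subset of petals contains $P$ in its span), there is a compatible choice of layers $i(1),\dots,i(\rho+1)$ such that the projection chain both remains inside the correct nested subspaces and can call upon enough concurrent independent subgeometries at each step. This is really a combinatorial matching between the ``dimension demand'' produced by the projection argument and the ``supply'' encoded by $\fn{j}{i}$; concretely, the dual-triangular shape of $\fn{j}{i}$ (increasing from $j$ up to $\rho+1$ and then decreasing to $\rho+2-\lambda$) is exactly what guarantees that every admissible sequence of demands can be met.

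Finally, I would handle the exceptional case $q' = 2$. Here sublines contain only three points, so the final pigeonhole step inside $\tau_{\rho+1}$ (the argument that a hyperplane of $\ag(\rho+1,(q')^{m-\rho})$ meets the union of $\rho+1$ concurrent independent affine lines) fails in one degenerate configuration. The set $\ppi_1'$ is tailored to inject, for each layer $i\geq 2$, an extra subgeometry $\Sub_{i1}' \setminus \HyperSub_i$ lying outside $\Sub_{(i-1)1}'$; I would show that inserting one such point into the chain of projections resolves the missing case and completes the saturation of $P$.
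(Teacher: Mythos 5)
Your plan correctly identifies the skeleton of the argument (invoke Lemma \ref{Lm_FlowerSaturates} for the generic case, set up a parameter $\mu$ measuring how few petals of $\Flower_1$ span $P$, exploit the dual-triangular design of $\fn{\cdot}{\cdot}$, and treat $q'=2$ via $\ppi_1'$), but the technical core is left unverified and the mechanism you describe is not the one that actually makes the paper's proof work. You propose to ``re-run the iterative projection procedure from Lemma \ref{Lm_FlowerSaturates} but choosing, for each index $j$, the appropriate layer $i=i(j)$''; the paper instead performs an explicit $(\rho+1-\mu)$-step \emph{petal-splitting} recursion: starting from the $(\n-\rho)$-flower $\{\tau_{11}',\dots,\tau_{1(\mu-1)},\tau_{1j'}\}$ with $\mu$ petals spanning $P$, one intersects each petal with $\vspan{\Sigma_2,F_{1j}}$ (and adjoins the next-layer subgeometries in $\tau_{2j'}$) to produce an $(\n-\rho-1)$-flower with $\mu+1$ petals, still spanning $P$ minimally; iterating down the layers yields a flower with $\rho+1$ petals whose per-petal subgeometry counts (your lists $L_i$, the paper's as well) end up being exactly $(1,2,\dots,\rho+1)$, at which point Lemma \ref{Lm_FlowerSaturates} applies. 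That the counts actually arrive at $(1,2,\dots,\rho+1)$, and that each intermediate flower is still a genuine flower containing $P$ in the span of all its petals but no fewer, is precisely ``the bookkeeping'' you flag as the main obstacle and explicitly admit you have not done; this is the heart of the lemma, not an afterthought.

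Two further gaps. First, you omit the cases $\mu=1$ and $1<\mu\le\rho+1-\lambda$; these cannot be subsumed into the splitting argument (the first uses Lemma \ref{Lm_SubIsRhoSaturating} inside a single petal, the second is a direct spanning count that only makes sense when $\lambda=\n-\rho$), and your ``companion invariant describing how deeply $P$ descends'' is not defined and does not obviously encode them. Second, your description of where $q'=2$ causes trouble is incorrect: Lemma \ref{Lm_FlowerSaturates}'s final pigeonhole step is fine for all $q'$; the failure mode is in the splitting recursion, specifically in finding, among the $q'+1$ hyperplanes of $\tau_{11}'$ through $\Sigma_2$, one not equal to $\Sigma_1$, $\overline{\tau}_{21}$, or $\widetilde{\tau}_{21}$ that still carries a subgeometry inside $\SubSat_{(\n,\rho)}$. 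When $q'=2$ there are only three such hyperplanes, and $\ppi_1'$ is used (together with Lemma \ref{Lm_UniqueSubThroughHyperSubAndHyperplane}) to manufacture the missing subgeometry, not to ``insert one extra point into the chain of projections.''
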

	\begin{proof}
	    Let $P$ be an arbitrary point of $\pg(\n,q)\setminus\Sigma_1$ and let
	    \[
	        \mu:=\min\big\{|\Flower|:\Flower\subseteq\Flower_1,P\in\vspanbig{\tau:\tau\in\Flower}\big\}\in\{1,2,\dots,\rho+1\}\textnormal{.}
	    \]
	    Hence, there exists an $(\n-\rho)$-flower $\Flower_1':=\{\tau_{11}',\tau_{12}',\dots,\tau_{1\mu}'\}\subseteq\Flower_1$ with pistil $\Sigma_1$ such that $P$ lies in the span of all $\mu$ petals of $\Flower_1'$, but does not lie in the span of any $\mu-1$ petals of $\Flower_1'$.
	    
	    For each petal $\tau_{1j}'$, $j\in\{2,3,\dots,\mu-1\}$, only the points of $\SubSat_{(\n,\rho)}$ in the top layer ($i=1$) of $\tau_{1j}'$ will be used to prove point saturation.
	    As a consequence, we can assume without loss of generality that $\tau_{1j}'=\tau_{1j}$ for every $j\in\{2,3,\dots,\mu-1\}$.
	    If $q'>2$, the same can be said about petal $\tau_{11}'$.
	    If $q'=2$, however, two possibilities can occur:
	    \begin{enumerate}[label=\roman*]
	        \item either there exist at least two $(\n-\rho)$-dimensional $q'$-subgeometries $\Sub_1$ and $\Sub_2$ in $\tau_{11}'$, both containing $\HyperSub_1$ and a point $F\in\tau_{11}'\setminus\Sigma_1$, such that $\left(\Sub_1\cup\Sub_2\right)\setminus\HyperSub_1\subseteq\SubSat_{(\n,\rho)}$, or
	        \item $\tau_{11}'=\tau_{11}$.
	    \end{enumerate}
	    Note that for both possibilities i and ii, there exists one $(\n-\rho)$-dimensional $q'$-subgeometry $\Sub$ in $\tau_{11}'$ containing $\HyperSub_1$ such that $\Sub\setminus\HyperSub_1\subseteq\SubSat_{(\n,\rho)}$; this is the only property needed of petal $\tau_{11}'$ in Case $1$, Case $2$ and Case $3$ (step $1.$ and $2.$) below.
	    Only in Case $3$ (step $3.$) a distinction between possibility i and ii has to be made.
	    In light of this, we will, for now, assume that $\tau_{11}'=\tau_{11}$, and will remove this assumption in the third step of Case $3$.
	    Finally, we may assume that $\tau_{1\mu}'\in\{\tau_{1\mu},\tau_{1(\mu+1)},\dots,\tau_{1(\rho+1)}\}$, hence there has to exist a $j'\geq\mu$ such that $\tau_{1\mu}'=\tau_{1j'}$.
		In conclusion, assume that $\Flower_1'=\{\tau_{11},\tau_{12},\dots,\tau_{1(\mu-1)},\tau_{1j'}\}$ for a certain $j'\geq\mu$.
		
	    If $\mu=\rho+1$, the proof follows immediately due to Lemma \ref{Lm_FlowerSaturates}.
	    We consider three cases, depending on the other possible values of $\mu$.
	    
	    \bigskip
		\underline{Case $1$}: $\mu=1$.
		
		\bigskip
		In this case, $P$ is contained in $\tau_{11}$, which is an $(\n-\rho)$-dimensional subspace containing $\Sub_{11}^{(1)}\setminus\HyperSub_1\subseteq\SubSat_{(\n,\rho)}$.
		By Lemma \ref{Lm_SubIsRhoSaturating}, there exists an $r$-subspace $\pi$ of $\Sub_{11}^{(1)}$, $0<r\leq\rho$, such that $\vspanq{\pi}$ contains $P$. As $P\notin\Sigma_1$, $\pi\setminus\Sigma_1$ has to be isomorphic to $\ag(r,q')$, hence we can easily find $r+1$ points in $\pi\setminus\Sigma_1\subseteq\Sub_{11}^{(1)}\setminus\Sigma_1$ spanning $\vspanq{\pi}$.
		
		\bigskip
		\underline{Case $2$}: $1<\mu\leq\rho+1-\lambda$.
		
		\bigskip
	    Note that the occurrence of this case implies that $\lambda=\n-\rho$.
		
		We can choose $\n-\rho+1$ points of $\Sub_{1j'}^{(1)}\setminus\HyperSub_1$ spanning the subspace $\tau_{1j'}\supseteq\Sigma_1$, and one point in each set $\Sub_{11}^{(1)}\setminus\HyperSub_1,\dots,\Sub_{1(\mu-1)}^{(1)}\setminus\HyperSub_1$.
		These choices result in a total of $(\n-\rho+1)+(\mu-1)\leq(\n-\rho+1)+(\rho-\lambda)=\rho+1$ points spanning $\vspan{\tau_{11},\tau_{12},\dots,\tau_{1(\mu-1)},\tau_{1j'}}\ni P$.
		
		\bigskip
		\underline{Case $3$}: $\rho+1-\lambda<\mu\leq\rho$.
		
		\bigskip
		Consider the following series of steps.
		\begin{enumerate}
		    \item For every $j\in\{1,2,\dots,\mu-1\}\cup\{j'\}$, define $\widetilde{\tau}_{2j}:=\vspan{\Sigma_2,F_{1j}}$ and consider, for every $k\in\{1,2,\dots,j\}$, the $(\n-\rho-1)$-dimensional $q'$-subgeometry $\BigSub_{2j}^{(k)}:=\Sub_{1j}^{(k)}\cap\vspan{\Sigma_2,F_{1j}}$.
		    In this way, we obtain, for every $j\in\{1,2,\dots,\mu-1\}\cup\{j'\}$, a union $\BigSub_{2j}^{(1)}\cup\BigSub_{2j}^{(2)}\cup\dots\cup\BigSub_{2j}^{(j)}$ of $j$ distinct, $\restr{\widetilde{\varphi}}{\widetilde{\tau}_{2j}}$-independent $(\n-\rho-1)$-dimensional $q'$-subgeometries in an $(\n-\rho-1)$-dimensional subspace $\widetilde{\tau}_{2j}$ of $\tau_{1j}$, each containing $\HyperSub_2$ and sharing the point $F_{1j}$.
		    \item Consider the union $\Sub_{2j'}^{(1)}\cup\Sub_{2j'}^{(2)}\cup\dots\cup\Sub_{2j'}^{\left(\fn{j'}{2}\right)}$ of $\fn{j'}{2}$ distinct, $\restr{\widetilde{\varphi}}{\tau_{2j'}}$-independent $(\n-\rho-1)$-dimensional $q'$-subgeometries in the $(\n-\rho-1)$-dimensional subspace $\tau_{2j'}\neq\widetilde{\tau}_{2j'}$ of $\tau_{1j'}$, each containing $\HyperSub_2$ and sharing the point $F_{2j'}$.
		    It is clear that $\tau_{2j'}$ and $\widetilde{\tau}_{2j'}$ span the space $\tau_{1j'}$, as these are distinct hyperplanes of the latter space.
    		\item As described at the start of this proof, we have to remove the assumption that $\tau_{11}'=\tau_{11}$.
    		
    		Note that $\vspan{\tau_{22},\tau_{23},\dots,\tau_{2(\mu-1)},\tau_{2j'}}$ and $\vspan{\tau_{22},\tau_{23},\dots,\tau_{2(\mu-1)},\widetilde{\tau}_{2j'}}$ both span hyperplanes of $\vspan{\tau_{11}',\tau_{12},\dots,\tau_{1(\mu-1)},\tau_{1j'}}$ that do not contain $\tau_{11}'$, hence each of these hyperplanes intersect $\tau_{11}'$ in an $(\n-\rho-1)$-subspace $\overline{\tau}_{21}$ and $\widetilde{\tau}_{21}$, respectively, both containing $\HyperSub_2$.
    		
    		The goal is to find an $(\n-\rho-1)$-flower with $\mu+1$ petals such that the $j^\textnormal{th}$ petal contains $j$ concurrent, $\widetilde{\varphi}$-independent $(\n-\rho-1)$-dimensional $q'$-subgeometries contained in $\SubSat_{(\n,\rho)}\cup\HyperSub_2$, with the additional property that $P$ lies in the span of these petals, but not in the span of any $\mu$ petals.
    		It is clear that, if we can find an $(\n-\rho-1)$-subspace $\widehat{\tau}_{21}\notin\{\overline{\tau}_{21},\widetilde{\tau}_{21}\}$ in $\tau_{11}'$, not lying in $\Sigma_1$ and containing an $(\n-\rho-1)$-dimensional $q'$-subgeometry $\Sub\supseteq\HyperSub_2$, $\Sub\setminus\HyperSub_2\subseteq\SubSat_{(\n,\rho)}$, then $\{\widehat{\tau}_{21},\tau_{22},\dots,\tau_{2(\mu-1)},\tau_{2j'},\widetilde{\tau}_{2j'}\}$ is the $(\n-\rho-1)$-flower we are looking for.
    		\begin{itemize}
    		    \item If $q'>2$, then there exists an $(\n-\rho-1)$-dimensional subspace of $\Sub_{11}^{(1)}$ spanning an $(\n-\rho-1)$-space $\widehat{\tau}_{21}$ that contains $\Sigma_2$, but is not equal to $\Sigma_1$, $\overline{\tau}_{21}$ or $\widetilde{\tau}_{21}$.
    		    \item If $q'=2$, we distinguish the two possibilities described at the start of the proof:
    		    \begin{enumerate}[label=\roman*]
    		        \item suppose there exist at least two $(\n-\rho)$-dimensional $q'$-subgeometries $\Sub_1$ and $\Sub_2$ in $\tau_{11}'$, both containing $\HyperSub_1$ and a point $F\in\tau_{11}'\setminus\Sigma_1$, such that $\left(\Sub_1\cup\Sub_2\right)\setminus\HyperSub_1\subseteq\SubSat_{(\n,\rho)}$.
        		    By Lemma \ref{Lm_UniqueSubThroughHyperSubAndHyperplane}, we find at least three $(\n-\rho-1)$-spaces through $\Sigma_2$, not lying in $\Sigma_1$, that intersect either $\Sub_1$ or $\Sub_2$ in an $(\n-\rho-1)$-dimensional $q'$-subgeometry.
        		    Hence, one of these three $(\n-\rho-1)$-spaces $\widehat{\tau}_{12}$ cannot be equal to $\overline{\tau}_{21}$ or $\widetilde{\tau}_{21}$.
        		    \item if $\tau_{11}'=\tau_{11}$, then, by the definition of the set $\ppi_1'$, we can always find an $(\n-\rho-1)$-dimensional subspace $\widehat{\tau}_{21}$ with the desired properties.
    		    \end{enumerate}
    		\end{itemize}
		\end{enumerate}
		Intuitively, the steps above split the initial $(\n-\rho)$-flower with $\mu$ petals into an $(\n-\rho-1)$-flower with $\mu+1$ petals.
		For this new flower, the property that $P$ is contained in the span of all of its petals, but not in the span of any fewer number of petals, still holds.
		We execute the steps above a total of $\rho+1-\mu$ times, leaving us with an $(\n-2\rho+\mu-1)$-flower $\Flower_{\rho+2-\mu}'$ with $\rho+1$ petals.
		Note that this is always possible, as by the assumption corresponding to this case, $\rho+1-\mu\leq\lambda-1$, which means that, in each step, one can always choose smaller petals containing subgeometries (see Construction \ref{Constr_Subgeometric}) which fulfil the desired conditions.
		
		Moreover, for each $j\in\{1,\dots,\rho+1\}$, there must exist a petal in $\Flower_{\rho+2-\mu}'$ with $j$ concurrent, $\widetilde{\varphi}$-independent $(\n-2\rho+\mu-1)$-dimensional $q'$-subgeometries contained in $\SubSat_{(\n,\rho)}\cup\HyperSub_1$.
		Indeed, let $L_i$ be the list of numbers of concurrent, $\widetilde{\varphi}$-independent $(\n-\rho-i)$-dimensional $q'$-subgeometries we can find in the respective petals of the flower we obtain after going through the steps $i$ times ($i\in\{0,1,\dots,\rho+1-\mu\}$).
		Then, by considering the nature of the maps $\fn{\cdot}{\cdot}$ (see Definition \ref{Def_LowerHigherMaps}), we get
		\begin{align*}
		    L_0 &= (1,2,\dots,\mu-1,j')\textnormal{,}\\
		    L_1 &= (1,2,\dots,\mu-1,j',j'+1)\textnormal{,}\\
		    L_2 &= (1,2,\dots,\mu-1,j',j'+1,j'+2)\textnormal{,}\\
		    &\;\;\vdots\\
		    L_{\rho+1-j'} &= (1,2,\dots,\mu-1,j',j'+1,\dots,\rho+1)\textnormal{,}\\
		    L_{\rho-j'} &= (1,2,\dots,\mu-1,j'-1,j',j'+1,\dots,\rho+1)\textnormal{,}\\
		    &\;\;\vdots\\
		    L_{\rho+1-\mu} &= (1,2,\dots,\mu-1,\mu,\mu+1,\dots,j'-1,j',j'+1,\dots,\rho+1)\textnormal{.}\\
		\end{align*}
		Hence, Lemma \ref{Lm_FlowerSaturates} finishes the proof.
	\end{proof}
	
	\begin{thm}\label{Thm_MainUpperBoundSubgeometric}
	    Let $0<\rho<\n$ and let $q=(q')^{\rho+1}$ for any prime power $q'$.
	    Then
	    \begin{align*}
	        \satbound(\n,\rho)&\leq\sum_{i=1}^{k(\n,\rho)}\left(\frac{(\rho+1)(\rho+2)}{2}(q')^{\n+1-i(\rho+1)}\right)+\sum_{i=1}^{k(\n,\rho)-1}\sum_{j=1}^{\rho-1}\widetilde{a}(\rho,j)(q')^{\n+1-i(\rho+1)-j}\\
	        &+\sum_{j=1}^{\ell(\n,\rho)-1}\overline{a}(\n,\rho,j)(q')^{\ell(\n,\rho)-j}-\widetilde{c}(\n,\rho)-\overline{c}(\n,\rho)\\
	        &+\delta_{q'=2}\cdot\left((2^{\rho-1}-1)\cdot\sum_{i=1}^{k(\n,\rho)-1}\left(2^{\n-\rho+2-i(\rho+1)}\right)+2^{\ell(\n,\rho)}-2\right)\textnormal{,}
	    \end{align*}
	    with
	    \begin{itemize}
	        \item $k(\n,\rho):=\left\lceil\frac{\n-\rho}{\rho+1}\right\rceil$,
	        \item $\ell(\n,\rho):=\n+1-k(\n,\rho)\cdot(\rho+1)=\big(\n\pmod{\rho+1}\big)+1$,
	        \item $\widetilde{a}(\rho,j):=\frac{\rho(\rho+2j+1)-j(3j+1)}{2}\leq\frac{\rho(2\rho+1)}{3}$,
	        \item $\overline{a}(\n,\rho,j):=\frac{\ell(\n,\rho)\big(2\rho-\ell(\n,\rho)+2j+1\big)-j(3j+1)}{2}\leq \widetilde{a}(\rho,j)$,
	        \item $\widetilde{c}(\n,\rho):=\big(k(\n,\rho)-1\big)\frac{\rho^2(\rho+1)}{2}\geq0$,
	        \item $\overline{c}(\n,\rho):=\frac{\rho(\rho+1)+\ell(\n,\rho)\big(\ell(\n,\rho)-1\big)\big(2\rho-\ell(\n,\rho)+1\big)}{2}\geq0$,
	        \item $\delta_{q'=2}:=\begin{cases}
	            1\qquad&\textnormal{if }q'=2\textnormal{,}\\
	            0\qquad&\textnormal{if }q'\neq2\textnormal{.}
	        \end{cases}$
	    \end{itemize}
	\end{thm}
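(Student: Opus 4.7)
The plan is to construct a $\rho$-saturating set of $\pg(\n,q)$ by iterating Construction \ref{Constr_Subgeometric} along a descending flag of subspaces. Let $k:=k(\n,\rho)$ and $\ell:=\ell(\n,\rho)$, and fix a flag
\[
    \pg(\n,q)=\Sigma^{(0)}\supsetneq\Sigma^{(1)}\supsetneq\cdots\supsetneq\Sigma^{(k)}
\]
with $\dim\Sigma^{(i)}=\n-i(\rho+1)$, so that $\dim\Sigma^{(k)}=\ell-1\leq\rho$. Each $\Sigma^{(i)}$ can be realised as the span of a suitable $q'$-subgeometry matching the hypotheses of Construction \ref{Constr_Subgeometric}.

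For every $i\in\{1,\ldots,k\}$, I would apply Construction \ref{Constr_Subgeometric} inside $\Sigma^{(i-1)}$, using $\Sigma^{(i)}$ in the role of $\Sigma_1$ from the construction. Lemma \ref{Lm_SubgeometricIsSaturating} then yields a set $\SubSat^{(i)}$ that $\rho$-saturates every point of $\Sigma^{(i-1)}\setminus\Sigma^{(i)}$, so the union $\SubSat:=\bigcup_{i=1}^{k}\SubSat^{(i)}$ $\rho$-saturates $\pg(\n,q)\setminus\Sigma^{(k)}$. For the bulk iterations ($i\leq k-1$) the ambient dimension is at least $2\rho+2$, so the construction's parameter is $\lambda_i=\rho$; the final iteration ($i=k$) takes place in dimension $\ell+\rho$, so $\lambda_k=\min(\rho,\ell)=\ell$ whenever $\ell\leq\rho$. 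The edge case $\ell=\rho+1$ corresponds to $\n+1$ being a multiple of $\rho+1$, which, as observed in the remark after Corollary \ref{Crl_TrivialUpperBound}, is already settled (without improvement) by the trivial bound.

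To also saturate the residual $\Sigma^{(k)}$ of dimension $\ell-1\leq\rho$, I would exploit the flower $\Flower_1$ of the final iteration, whose pistil is $\Sigma^{(k)}$ and whose petals $\tau_{1j}$ therefore all contain $\Sigma^{(k)}$. Taking $j=\rho+1$, the construction places an $\ell$-dimensional $q'$-subgeometry $\Sub$ spanning the $\ell$-dimensional petal $\tau_{1,\rho+1}$; by Lemma \ref{Lm_SubIsRhoSaturating}, this subgeometry $\rho$-saturates the entire petal, in particular every point of $\Sigma^{(k)}$. The technical crux is to ensure that the saturating points can be chosen from the affine part $\Sub\setminus\HyperSub_1\subseteq\SubSat^{(k)}$, rather than from the excluded sub-subgeometry. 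Since this affine part has $(q')^{\ell}$ points---comfortably more than the $\ell+1$ needed to span $\tau_{1,\rho+1}$---a short general position argument analogous to the ones used in Lemma \ref{Lm_FlowerSaturates} and Lemma \ref{Lm_UniqueSubThroughHyperSubAndHyperplane} delivers the required configuration. This residual step is the main obstacle of the proof, since it is the only place where the size formula leaves no slack for extra basis points.

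The size estimate then follows by direct summation. For every $i\in\{1,\ldots,k\}$, Lemmas \ref{Lm_SizeSumPiij} and \ref{Lm_SizePii1} applied with $\n\leftarrow\n-(i-1)(\rho+1)$ and $\lambda=\lambda_i$ yield
\[
    |\SubSat^{(i)}|\leq\frac{(\rho+1)(\rho+2)}{2}(q')^{\n+1-i(\rho+1)}+\sum_{j=1}^{\lambda_i-1}a_i(j)\,(q')^{\n+1-i(\rho+1)-j}-c_i+\delta_{q'=2}\cdot e_i\textnormal{,}
\]
with $(a_i(j),c_i)=(\widetilde{a}(\rho,j),\rho^2(\rho+1)/2)$ for $i<k$ and $(a_i(j),c_i)=(\overline{a}(\n,\rho,j),\overline{c}(\n,\rho))$ for $i=k$, while $e_i$ records the $q'=2$ contribution of Lemma \ref{Lm_SizePii1}. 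Summing over $i$, the bulk constants add up to $\widetilde{c}(\n,\rho)=(k-1)\rho^2(\rho+1)/2$, the final iteration contributes $\overline{c}(\n,\rho)$, and the $\delta_{q'=2}$ terms collapse to the expression in the theorem, reproducing precisely the stated upper bound on $\satbound(\n,\rho)$.
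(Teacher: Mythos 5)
Your iterated construction matches the paper's approach for the generic case, but your treatment of the edge case $\ell(\n,\rho)=\rho+1$ (equivalently $\n+1\equiv0\pmod{\rho+1}$) leaves a genuine gap. When $\ell=\rho+1$ the residual $\Sigma^{(k)}$ has dimension exactly $\rho$ and the petal $\tau_{1,\rho+1}$ has dimension $\rho+1$; spanning that petal requires $\rho+2$ points, so neither your argument via Lemma \ref{Lm_SubIsRhoSaturating} nor the simpler argument of choosing $\ell+1\leq\rho+1$ spanning points in $\ppi_1$ closes the residual. You dismiss this case by invoking the sentence after Corollary \ref{Crl_TrivialUpperBound}, but that sentence itself says ``as shown in the proof of Theorem \ref{Thm_MainUpperBoundSubgeometric}'' --- so the reference is circular, and the theorem statement certainly does not exclude these parameters. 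What is actually needed (and what the paper does) is a direct computation showing that, for $\n+1$ a multiple of $\rho+1$, the right-hand side $B(\n,\rho)$ of the theorem dominates the trivial bound $(\rho+1)\theta_{k(\n,\rho)}$ of Corollary \ref{Crl_TrivialUpperBound}: one bounds each $\widetilde{a}(\rho,j)$ and $\overline{a}(\n,\rho,j)$ below by quantities that absorb $\widetilde{c}(\n,\rho)+\overline{c}(\n,\rho)$, using estimates such as $\widetilde{a}(\rho,j)\geq2\rho-2$ and $\overline{a}(\n,\rho,j)\geq\rho$, and then observes $(q')^{\n-\rho}-(\n-\rho)\geq0$. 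Without this verification the theorem is unproven precisely on a positive-density set of parameters.

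Two smaller points. First, for the generic case your residual argument, which selects $\ell+1$ points of $\Sub_{1,\rho+1}^{(1)}\setminus\HyperSub_1$ spanning $\tau_{1,\rho+1}$, is valid; the paper uses the same idea with $\tau_{11}$ and $\ppi_1$, which avoids the appeal to Lemma \ref{Lm_SubIsRhoSaturating} entirely since one only needs that the affine part $\ag(\ell,q')$ contains a spanning set of $\ell+1\leq\rho+1$ points, not a ``general position argument analogous to Lemma \ref{Lm_FlowerSaturates}.'' Second, the theorem also asserts the auxiliary inequalities $\widetilde{a}(\rho,j)\leq\frac{\rho(2\rho+1)}{3}$ and $\overline{a}(\n,\rho,j)\leq\widetilde{a}(\rho,j)$; you should either verify these elementary estimates (viewing $\widetilde{a}$ as a concave quadratic in $j$ and $\frac{\ell(2\rho-\ell)}{2}$ as a concave quadratic in $\ell$) or note that they are not needed for the main bound.
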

	\begin{proof}
	    First, let us assume $\n+1$ is a multiple of $\rho+1$.
	    Then $k(\n,\rho)=\frac{\n-\rho}{\rho+1}$ and $\ell(\n,\rho)=\rho+1$.
	    Moreover, if we interpret $\widetilde{a}(\rho,j)$ and $\overline{a}(\n,\rho,j)$ as quadratic polynomials in $j$ (with $j\in\{1,\dots,\rho-1\}$ and $j\in\{1,\dots,\rho\}$, respectively), then one can check that these polynomials reach their minimum values if $j\in\{1,\rho-1\}$ and $j\in\{1,\rho\}$, respectively.
	    Hence, as $\rho\geq1$, we have
	    \begin{equation}\label{Eq_ApproxTilde}
	        0\leq2\rho-2\leq\min\big\{\widetilde{a}(\rho,1),\widetilde{a}(\rho,\rho-1)\big\}\leq \widetilde{a}(\rho,j) 
	    \end{equation}
	    for every $j\in\{1,\dots,\rho-1\}$, and
	    \begin{equation}\label{Eq_ApproxBar}
	        0\leq\rho\leq\min\big\{\overline{a}(\n,\rho,1),\overline{a}(\n,\rho,\rho)\big\}\leq \overline{a}(\n,\rho,j)\textnormal{,}
	    \end{equation}
	    for every $j\in\{1,\dots,\rho\}$.
	    Furthermore, the fact that $\n+1$ is a multiple of $\rho+1$ together with $\rho<\n$ implies that $2\rho+1\leq\n$, hence we obtain $\widetilde{c}(\n,\rho)\leq\frac{1}{2}\rho(\rho+1)(\n-2\rho-1)$ and $\overline{c}(\n,\rho)=\frac{1}{2}\rho(\rho+1)^2$.
	    Also note that $k(\n,\rho)\geq1$.
	    If we denote with $B(\n,\rho)$ the upper bound described in this theorem, we can combine these results, together with \eqref{Eq_ApproxTilde} and \eqref{Eq_ApproxBar}, to conclude that
	    \begin{align*}
	        B(\n,\rho)&\geq\sum_{i=1}^{k(\n,\rho)}\left(\frac{(\rho+1)(\rho+2)}{2}(q')^{\n+1-i(\rho+1)}\right)-\frac{1}{2}\rho(\rho+1)(\n-2\rho-1)-\frac{1}{2}\rho(\rho+1)^2+\delta_{q'=2}\\
	        &=\sum_{i=1}^{k(\n,\rho)}\left((\rho+1)(q')^{\n+1-i(\rho+1)}\right)\\
	        &\qquad\qquad\qquad+\sum_{i=1}^{k(\n,\rho)}\left(\frac{1}{2}\rho(\rho+1)(q')^{\n+1-i(\rho+1)}\right)-\frac{1}{2}\rho(\rho+1)(\n-\rho)+\delta_{q'=2}\\
	        &\geq\sum_{i=1}^{k(\n,\rho)}\left((\rho+1)(q')^{\n+1-i(\rho+1)}\right)+\frac{1}{2}\rho(\rho+1)(q')^{\n-\rho}-\frac{1}{2}\rho(\rho+1)(\n-\rho)+\delta_{q'=2}\\
	        &=(\rho+1)\left(q^{k(\n,\rho)}+q^{k(\n,\rho)-1}+\dots+q\right)+\frac{1}{2}\rho(\rho+1)\left((q')^{\n-\rho}-(\n-\rho)\right)+\delta_{q'=2}\\
	        &\geq(\rho+1)\left(q^{k(\n,\rho)}+q^{k(\n,\rho)-1}+\dots+q\right)+(\rho+1)\geq\satbound(\n,\rho)\textnormal{.}
	    \end{align*}
	    The latter inequality follows from Corollary \ref{Crl_TrivialUpperBound}.
	    
		As a result, we can assume that $\n+1$ is not a multiple of $\rho+1$.
		
	    By Lemma \ref{Lm_SubgeometricIsSaturating}, we can choose a point set $\SubSat_{(\n,\rho)}$ in $\pg(\n,q)$ (described in Construction \ref{Constr_Subgeometric}) which $\rho$-saturates all points of $\pg(\n,q)$, except for the points contained in a certain $(\n-\rho-1)$-subspace $\Sigma$.
	    
	    If $\n-\rho-1\leq\rho$, then $\n-\rho-1<\rho$, as else $\n+1$ would be a multiple of $\rho+1$.
	    Hence, in this case, all points of $\Sigma$ are $\rho$-saturated by $\SubSat_{(\n,\rho)}$ as well, as we can simply choose $\rho+1$ points in $\ppi_1$ that span the subspace $\tau_{11}\supseteq\Sigma$.
	    
	    If $\n-\rho-1>\rho$, then, by Lemma \ref{Lm_SubgeometricIsSaturating}, we can choose a point set $\SubSat_{(\n-(\rho+1),\rho)}$ in $\Sigma$ which $\rho$-saturates all points of $\Sigma$, except for the points contained in a certain $\big(\n-2(\rho+1)\big)$-subspace of $\Sigma$.
	    We can repeat this process to obtain a union
	    \[
	        \SubSat_{(\n,\rho)}\cup\SubSat_{(\n-(\rho+1),\rho)}\cup\dots\cup\SubSat_{(\n-(k(\n,\rho)-1)(\rho+1),\rho)}
	    \]
	    of $k(\n,\rho)$ point sets that $\rho$-saturates all points of $\pg(\n,q)$.
	    
	    For each $i\in\{1,2,\dots,k(\n,\rho)\}$, the size of the point set $\SubSat_{\left(\n-(i-1)(\rho+1),\rho\right)}$ can be calculated using Lemma \ref{Lm_SizePii1} and Lemma \ref{Lm_SizeSumPiij}, where every instance of $\n$ has to be replaced by $\n-(i-1)(\rho+1)$, hence every instance of $\lambda$ has to be replaced by $\lambda_i:=\min\{\rho,\n-(i-1)(\rho+1)-\rho\}$.
	    \begin{itemize}
	        \item If $i\in\{1,2,\dots,k(\n,\rho)-1\}$, then $\rho<\n-(i-1)(\rho+1)-\rho$, which implies that $\lambda_i=\rho$.
	        \item If $i=k(\n,\rho)$, then $\rho\geq\n-\big(k(\n,\rho)-1\big)(\rho+1)-\rho$ (keeping in mind that $\n+1$ is no multiple of $\rho+1$), which implies that $\lambda_{k(\n,\rho)}=\ell(\n,\rho)$.
	    \end{itemize}
	    
	    Finally, we claim that $\overline{a}(\n,\rho,j)\leq \widetilde{a}(\rho,j)\leq\frac{\rho(2\rho+1)}{3}$, for all $j\in\{1,\dots,\rho\}$.
	    Indeed, for the first inequality, one can interpret $\frac{\ell(\n,\rho)\left(2\rho-\ell(\n,\rho)\right)}{2}$ as a quadratic polynomial in $\ell(\n,\rho)$, which reaches its maximum value if $\ell(\n,\rho)=\rho$.
	    For the second inequality, one can interpret $\widetilde{a}(\rho,j)$ as a quadratic polynomial in $j$, which reaches its maximum value if $j=\frac{2\rho-1}{6}$.
	    However, the latter is never an integer.
	    Hence, one can conclude that
	    \[
	        \widetilde{a}(\rho,j)\leq\max\left\{a\left(\rho,\frac{2\rho-1}{6}-\frac{1}{6}\right),a\left(\rho,\frac{2\rho-1}{6}+\frac{1}{6}\right)\right\}=\frac{\rho(2\rho+1)}{3}\textnormal{,}
	    \]
	    for all $j\in\{1,\dots,\rho\}$.
	\end{proof}
	
	As the upper bound presented in Theorem \ref{Thm_MainUpperBoundSubgeometric} is not easy to work with in practice, a simplified upper bound is desired.
	If $\rho$ is large enough, the upper bound of Theorem \ref{Thm_MainUpperBoundSubgeometric} simplifies considerably.
	More precisely, \textbf{if} $\boldsymbol{\rho\geq\frac{\n-1}{2}}$, then $k(\n,\rho)=1$ and $\ell(\n,\rho)=\n-\rho$, hence the bound of Theorem \ref{Thm_MainUpperBoundSubgeometric} becomes the following:
	\[
        \satbound(\n,\rho)\leq\frac{(\rho+1)(\rho+2)}{2}(q')^{\n-\rho}+\sum_{j=1}^{\n-\rho-1}\overline{a}(\n,\rho,j)(q')^{\n-\rho-j}-\overline{c}(\n,\rho)+\delta_{q'=2}\cdot\left(2^{\n-\rho}-2\right)\textnormal{.}
    \]
    
	In case $\rho>1$, one can deduce from Theorem \ref{Thm_MainUpperBoundSubgeometric} the following easy-to-read but slightly weaker bound, which we present as our main result.
	
	\begin{thm}\label{Thm_MainUpperBoundSubgeometricRhoLargerThan1}
	    Let $1<\rho<\n$ and let $q=(q')^{\rho+1}$ for any prime power $q'$.
		Then
		\[
		    \satbound(\n,\rho) \leq \frac{(\rho+1)(\rho+2)}{2}(q')^{\n-\rho} + \rho(\rho+1)\frac{(q')^{\n-\rho}-1}{q'-1}\textnormal{.}
		\]
	\end{thm}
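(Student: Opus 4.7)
The plan is to derive Theorem~\ref{Thm_MainUpperBoundSubgeometricRhoLargerThan1} as a direct simplification of Theorem~\ref{Thm_MainUpperBoundSubgeometric}, by uniformly relaxing the coefficients appearing in its precise bound. To begin, I would discard the two nonnegative subtracted terms $-\widetilde{c}(\n,\rho)$ and $-\overline{c}(\n,\rho)$: dropping them can only enlarge the upper bound, so the inequality is preserved.

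Next, I would invoke the coefficient estimates already recorded inside Theorem~\ref{Thm_MainUpperBoundSubgeometric}, namely $\widetilde{a}(\rho,j)\leq\frac{\rho(2\rho+1)}{3}$ and $\overline{a}(\n,\rho,j)\leq\widetilde{a}(\rho,j)$. An elementary check shows $\frac{\rho(2\rho+1)}{3}\leq\rho(\rho+1)$ for every $\rho\geq0$, so all these coefficients are bounded by $\rho(\rho+1)$. Using the assumption $\rho>1$, the inequality $\frac{(\rho+1)(\rho+2)}{2}\leq\rho(\rho+1)$ also holds, enabling me to replace the coefficient $\frac{(\rho+1)(\rho+2)}{2}$ on every non-top block-leading term $(q')^{\n+1-i(\rho+1)}$ (with $i\geq2$) by $\rho(\rho+1)$.

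A short bookkeeping check confirms that the exponents appearing in the complex bound's non-leading contributions are pairwise distinct and all lie in $\{0,1,\dots,\n-\rho-1\}$. After the substitutions, extending this reduced sum to the full geometric series yields
\[
    \rho(\rho+1)\sum_{e=0}^{\n-\rho-1}(q')^e = \rho(\rho+1)\frac{(q')^{\n-\rho}-1}{q'-1},
\]
which, together with the preserved leading term $\frac{(\rho+1)(\rho+2)}{2}(q')^{\n-\rho}$, is precisely the claimed upper bound in the case $q'\geq 3$ (where $\delta_{q'=2}=0$).

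The remaining, and principal, obstacle is to handle the extra summand present when $q'=2$. Here a naive coefficient-by-coefficient comparison can fail at isolated exponents for large $\rho$, so the contribution must be absorbed globally. A standard geometric-series estimate gives $(2^{\rho-1}-1)\sum_{i=1}^{k(\n,\rho)-1}2^{\n-\rho+2-i(\rho+1)}\leq 2^{\n-\rho+1}$ and $2^{\ell(\n,\rho)}-2\leq 2^\rho$, while the cumulative slack in the coefficient relaxations — in particular the full weight $\rho(\rho+1)\cdot(q')^e$ introduced at every exponent $e<\n-\rho$ at which the complex bound contributes nothing — comfortably dominates these contributions for $\rho\geq 2$. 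This last step is a routine but slightly technical verification that requires no new idea beyond the substitutions already made.
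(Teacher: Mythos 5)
Your approach — start from Theorem~\ref{Thm_MainUpperBoundSubgeometric}, discard the nonpositive terms $-\widetilde{c}(\n,\rho)-\overline{c}(\n,\rho)$, relax every non-leading coefficient to $\rho(\rho+1)$, and sum the resulting geometric series — is exactly the simplification the paper has in mind (it only says ``one can deduce'' and gives no proof), and your coefficient checks $\overline{a}(\n,\rho,j)\leq\widetilde{a}(\rho,j)\leq\frac{\rho(2\rho+1)}{3}\leq\rho(\rho+1)$ and $\frac{(\rho+1)(\rho+2)}{2}\leq\rho(\rho+1)$ for $\rho\geq2$ are correct, as is the observation that the exponents appearing in the non-leading positive terms are pairwise distinct and all lie in $\{1,\dots,\n-\rho-1\}$. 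For $q'\geq3$ this closes the argument cleanly.

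For $q'=2$, however, two points need repair. First, the claimed inequality $2^{\ell(\n,\rho)}-2\leq 2^\rho$ is false when $\ell(\n,\rho)=\rho+1$, i.e.\ when $\n+1\equiv0\pmod{\rho+1}$, which is not excluded by the theorem's hypotheses; you only have $\ell(\n,\rho)\leq\rho+1$, giving $2^{\ell(\n,\rho)}-2\leq 2^{\rho+1}-2$. Second, the ``routine but slightly technical verification'' is in fact the substantive part of the $q'=2$ argument and should not be waved off: a uniform absorption argument does go through (e.g.\ the slack $\big(\rho(\rho+1)-\widetilde{a}(\rho,1)\big)2^{\n-\rho-1}=\frac{\rho^2-\rho+4}{2}\,2^{\n-\rho-1}$ already exceeds the tighter bound $(2^{\rho-1}-1)\sum_{i=1}^{k-1}2^{\n-\rho+2-i(\rho+1)}<2^{\n-\rho}$ for all $\rho\geq1$, and the slack at exponent $\ell(\n,\rho)-1$ plus the dropped $\overline{c}(\n,\rho)$ absorb $2^{\ell(\n,\rho)}-2$ for $\rho\geq2$), but it needs to be written out, and your stated building blocks as they stand are not enough to conclude for $\rho\geq5$, where $2^\rho$ exceeds the single slack term $\rho(\rho+1)$ at exponent~$0$ that your sketch appears to rely on.
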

	
	Translating the result above in coding theoretical terminology (see Subsection \ref{Subsec_CovCod}), one obtains the following.
	
	\begin{crl}\label{Crl_MainUpperBoundSubgeometricRLargerThan2}
	    Let $2<R<\red$ and let $q=(q')^R$ for any prime power $q'$.
	    Then
	    \[
		    \lenfunc(\red,R) \leq \frac{R(R+1)}{2}(q')^{\red-R} + (R-1)R\frac{(q')^{\red-R}-1}{q'-1}\textnormal{.}
		\]
		For any infinite family of covering codes of length equal to the upper bound above, the following holds for its asymptotic covering density:
		\[
		    \covdeninf(R)<\frac{\big((R-1)R\big)^R}{R!}\left(1+\frac{1}{q'}+\dots+\frac{1}{(q')^{R-1}}\right)^R<\left(e(R-1)\frac{q-1}{q-(q')^{R-1}}\right)^R\textnormal{,}
		\]
		with $e\approx2.718...$ being Euler's number.
	\end{crl}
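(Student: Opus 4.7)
The plan is to reduce both assertions to elementary computations based on Theorem \ref{Thm_MainUpperBoundSubgeometricRhoLargerThan1} and the definitions from Subsection \ref{Subsec_CovCod}. The bound on $\lenfunc(\red,R)$ is immediate: applying Theorem \ref{Thm_MainUpperBoundSubgeometricRhoLargerThan1} with $\n:=\red-1$ and $\rho:=R-1$ translates the hypothesis $2<R<\red$ into $1<\rho<\n$, and the identity $\lenfunc(\red,R)=\satbound(\red-1,R-1)$ together with the relabellings $(\rho+1)(\rho+2)/2\mapsto R(R+1)/2$, $\rho(\rho+1)\mapsto(R-1)R$ and $\n-\rho\mapsto\red-R$ produces the stated formula.

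For the covering density, I would start from the definition $\covden(\len,\red,\mathcal{C})=q^{-\red}\sum_{i=0}^{R}(q-1)^{i}\binom{\len}{i}$ and observe that, for fixed $q$ and $R$, the $i=R$ term dominates as $\len\to\infty$, so that $\covden\sim(q-1)^R\len^R/(R!\,q^\red)$. Next, one writes $\len\leq C\,(q')^{\red-R}$ with $C:=\frac{R(R+1)}{2}+\frac{(R-1)R}{q'-1}$; since $q=(q')^R$, the ratio $\len^R/q^\red$ converges to $C^R/q^R$, so
\[
\covdeninf(R)\leq\frac{(q-1)^RC^R}{q^RR!}.
\]

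The key algebraic step exploits the hypothesis $R>2$: one checks that $\frac{R(R+1)}{2}\leq(R-1)R$ holds precisely when $R\geq 3$, which gives $C\leq\frac{(R-1)Rq'}{q'-1}$. Combining this with the elementary identity
\[
\frac{q'(q-1)}{q(q'-1)}=1+\frac{1}{q'}+\dots+\frac{1}{(q')^{R-1}}=\frac{q-1}{q-(q')^{R-1}}
\]
yields the first claimed bound on $\covdeninf(R)$ (with strictness provided by the factor $(q-1)/q<1$), and Stirling's estimate $R!>(R/e)^R$ (equivalently $\big((R-1)R\big)^R/R!<(e(R-1))^R$) then produces the closed-form second inequality.

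The main obstacle is a bookkeeping one: one must justify that the $\liminf$ defining $\covdeninf(R)$ genuinely reflects the leading asymptotics of the supporting codes, and is not depressed by intermediate filler codes. This is handled by recalling, as discussed just before Open Problem \ref{Prob_Main}, that along any infinite family the codimension sequence may be assumed non-decreasing, so filler codes (obtained by $1$-extension) strictly \emph{increase} the covering density between consecutive supporting codes; hence the $\liminf$ is attained along the supporting subsequence and equals the asymptotic value computed above.
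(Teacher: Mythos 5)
Your first step — obtaining the bound on $\lenfunc(\red,R)$ by applying Theorem \ref{Thm_MainUpperBoundSubgeometricRhoLargerThan1} with $\n=\red-1$, $\rho=R-1$ and the identity $\lenfunc(\red,R)=\satbound(\red-1,R-1)$ — is exactly the intended translation; the paper gives no separate proof of this corollary, so your route is the natural one. The asymptotic density computation is also the right approach: the $i=R$ term of $\covden=q^{-\red}\sum_{i=0}^R(q-1)^i\binom{\len}{i}$ dominates, $\binom{\len}{R}\sim\len^R/R!$, $\len/(q')^{\red-R}\to C:=\frac{R(R+1)}{2}+\frac{(R-1)R}{q'-1}$, and $q=(q')^R$ collapses $\len^R/q^\red\to C^R/q^R$, giving $\covdeninf(R)=(q-1)^RC^R/(R!\,q^R)$. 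Your handling of the filler codes (they only push $\covden$ up, so the $\liminf$ is read off the supporting subsequence) and the final Stirling step are both correct.

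However, the parenthetical remark that strictness is ``provided by the factor $(q-1)/q<1$'' is misplaced. The factor $(q-1)/q$ is absorbed \emph{exactly} into the geometric-series identity $1+\frac{1}{q'}+\dots+\frac{1}{(q')^{R-1}}=\frac{q'(q-1)}{q(q'-1)}$, so it contributes no slack whatsoever. The only inequality in your chain that can be strict is $C\leq(R-1)R\frac{q'}{q'-1}$, which reduces to $\frac{R(R+1)}{2}\leq(R-1)R$; that is strict precisely when $R>3$, and is an \emph{equality} when $R=3$. Since the first step $\covdeninf(R)\leq(q-1)^RC^R/(R!\,q^R)$ is in fact an equality (it is the limit along the supporting codes), your argument gives $\covdeninf(3)=\frac{\big((R-1)R\big)^R}{R!}\left(1+\frac{1}{q'}+\frac{1}{(q')^2}\right)^R$ with equality rather than $<$. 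You should correct the attribution of strictness, note that it genuinely comes from $\frac{R(R+1)}{2}<(R-1)R$ for $R>3$, and observe that the case $R=3$ appears to yield equality (so the first displayed inequality of the corollary as stated deserves a $\leq$ at $R=3$); the second inequality, by contrast, is always strict thanks to Stirling.
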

	
	\begin{prob}
	    Given that $q=(q')^{\rho+1}=(q')^R$, improve the leading coefficient $\frac{(\rho+1)(\rho+2)}{2}$ of Theorem \ref{Thm_MainUpperBoundSubgeometric} and \ref{Thm_MainUpperBoundSubgeometricRhoLargerThan1} (respectively the leading coefficient $\frac{R(R+1)}{2}$ of Corollary \ref{Crl_MainUpperBoundSubgeometricRLargerThan2}) to one that is linear in $\rho$ (respectively linear in $R$).
	\end{prob}
	
	\textbf{Acknowledgements.}
	First of all, I would like to thank Fernanda Pambianco for her opinion on these results, as well as her guidance through the many literary works on this topic.
	Secondly, I would like to express my appreciation for the quick responses and helpful replies of Maarten De Boeck and Geertrui Van de Voorde on the topic of point-line geometries and linear representations.
	A special thanks goes towards Stefaan De Winter for his incredibly elegant sketch on how to construct an isomorphism between $Y(\rho,m,q')$ and $X(\rho,m,q')$ using field reduction (see Subsection \ref{Subsec_IsomorphismXFieldRed}), which motivated me to construct the direct isomorphism described in Section \ref{Sect_PointLine}.
	Finally, I would like to thank my supervisor Leo Storme and colleague Jozefien D'haeseleer for their time and effort in proofreading this work, as well as the anonymous referees for their insightful corrections and remarks.

\bibliographystyle{plain}
\bibliography{main}

\bigskip
Author's address:

\bigskip
Lins Denaux

Ghent University

Department of Mathematics: Analysis, Logic and Discrete Mathematics

Krijgslaan $281$ -- Building S$8$

$9000$ Ghent

BELGIUM

\texttt{e-mail : lins.denaux@ugent.be}

\texttt{website: }\url{https://users.ugent.be/~ldnaux}

\end{document}